\documentclass[10pt]{article}
\usepackage[T1]{fontenc}
\usepackage[latin9]{inputenc}
\usepackage[paperwidth=19cm,paperheight=28cm]{geometry}
\geometry{verbose,tmargin=2cm,bmargin=2cm,lmargin=2cm,rmargin=2cm}
\usepackage{color}
\usepackage{amsthm}
\usepackage{amsmath}
\usepackage{amssymb}
\usepackage{esint}
\usepackage[numbers]{natbib}
\usepackage[unicode=true,pdfusetitle,
 bookmarks=true,bookmarksnumbered=false,bookmarksopen=false,
 breaklinks=false,pdfborder={0 0 0},backref=false,colorlinks=true]
 {hyperref}
\hypersetup{
 citecolor=blue,urlcolor=blue}

\makeatletter
\usepackage{enumitem}		
\newcommand{\lyxaddress}[1]{
\par {\raggedright #1
\vspace{1.4em}
\noindent\par}
}
\theoremstyle{plain}
\newtheorem{thm}{\protect\theoremname}
  \theoremstyle{remark}
  \newtheorem{rem}[thm]{\protect\remarkname}
  \theoremstyle{definition}
  \newtheorem{condition}[thm]{\protect\conditionname}
  \theoremstyle{plain}
  \newtheorem{lem}[thm]{\protect\lemmaname}
  \theoremstyle{plain}
  \newtheorem{prop}[thm]{\protect\propositionname}
  \theoremstyle{plain}
  \newtheorem{cor}[thm]{\protect\corollaryname}
  \theoremstyle{definition}
  \newtheorem*{example*}{\protect\examplename}
  \theoremstyle{plain}
  \newtheorem{conjecture}[thm]{\protect\conjecturename}
  \theoremstyle{remark}
  \newtheorem*{acknowledgement*}{\protect\acknowledgementname}

\RequirePackage[OT1]{fontenc}
\RequirePackage{amsthm,amsmath,natbib}


\newcounter{hypA}
\renewenvironment{condition}{\refstepcounter{hypA}\begin{itemize}
\item[({\bf A\arabic{hypA}})]}{\end{itemize}}

\makeatother

  \providecommand{\acknowledgementname}{Acknowledgement}
  \providecommand{\conditionname}{Condition}
  \providecommand{\conjecturename}{Conjecture}
  \providecommand{\corollaryname}{Corollary}
  \providecommand{\examplename}{Example}
  \providecommand{\lemmaname}{Lemma}
  \providecommand{\propositionname}{Proposition}
  \providecommand{\remarkname}{Remark}
\providecommand{\theoremname}{Theorem}

\begin{document}

\title{Uniform Ergodicity of the Iterated Conditional SMC and Geometric
Ergodicity of Particle Gibbs samplers}

\author{Christophe Andrieu, Anthony Lee and Matti Vihola}

\maketitle

\lyxaddress{School of Mathematics, University of Bristol,}

\lyxaddress{Department of Statistics, University of Warwick,}

\lyxaddress{Department of Statistics, University of Oxford.}

\begin{abstract}
We establish quantitative bounds for rates of convergence and asymptotic
variances for iterated conditional sequential Monte Carlo (i-cSMC)
Markov chains and associated particle Gibbs samplers~\citep{andrieu-doucet-holenstein}.
Our main findings are that the essential boundedness of potential
functions associated with the i-cSMC algorithm provide necessary and
sufficient conditions for the uniform ergodicity of the i-cSMC Markov
chain, as well as quantitative bounds on its (uniformly geometric)
rate of convergence. Furthermore, we show that the i-cSMC Markov chain
cannot even be geometrically ergodic if this essential boundedness
does not hold in many applications of interest. Our sufficiency and
quantitative bounds rely on a novel non-asymptotic analysis of the
expectation of a standard normalizing constant estimate with respect
to a ``doubly conditional'' SMC algorithm. In addition, our results
for i-cSMC imply that the rate of convergence can be improved arbitrarily
by increasing $N$, the number of particles in the algorithm, and
that in the presence of mixing assumptions, the rate of convergence
can be kept constant by increasing $N$ linearly with the time horizon.
 We translate the sufficiency of the boundedness condition for i-cSMC
into sufficient conditions for the particle Gibbs Markov chain to
be geometrically ergodic and quantitative bounds on its geometric
rate of convergence, which imply convergence of properties of the
particle Gibbs Markov chain to those of its corresponding Gibbs sampler.
These results complement recently discovered, and related, conditions
for the particle marginal Metropolis--Hastings (PMMH) Markov chain.\medskip{}

\emph{Keywords:} geometric ergodicity; iterated conditional sequential
Monte Carlo; Metropolis-within-Gibbs; particle Gibbs; uniform ergodicity
\end{abstract}

\section{Introduction\label{sec:Introduction}}

Particle Markov chain Monte Carlo (P-MCMC) methods are a set of recently
proposed sampling techniques particularly well suited to the Bayesian
estimation of static parameters in general state-space models~\citep{andrieu-doucet-holenstein},
although their scope extends beyond this class of models. At an abstract
level, once the likelihood function and prior are defined, inference
for this class of models relies on a probability distribution $\pi\big({\rm d}\theta\times{\rm d}x\big)$,
defined on some measurable space $\left(\Theta\times\mathsf{X},\mathcal{B}(\Theta)\times\mathcal{B}\mathsf{(X)}\right)$,
where $\theta$ is generally a low dimensional static parameter, the
static parameter, while $x$, the hidden state of the system, is a
large vector with a non-trivial dependence structure. Here, $\mathcal{B}(\,\cdot\,)$
denotes the $\sigma$-algebra related to the corresponding space.
In practice the complexity of such probability distributions requires
the use of sampling techniques to effectively carry out inference.
When $\theta$ is known sequential Monte Carlo methods (SMC), or particle
filters, are particularly suitable to carry out inference about $x$
by approximately sampling from the conditional distribution $\pi_{\theta}\big({\rm d}x\big)$.
These algorithms rely on interacting particle systems and their performance
and accuracy can be improved by increasing the number $N$ of such
particles. P-MCMC realises the synthesis between SMC methods and classical
Markov chain Monte Carlo (MCMC) methods, that is it allows the construction
of Markov transition probabilities leaving $\pi\big({\rm d}\theta\times{\rm d}x\big)$
at least marginally invariant and from which it is possible to sample
realisations $\{(\theta_{i},X_{i}),i\geq0\}$ with attractive efficiency
properties.

The particle marginal Metropolis--Hastings (PMMH) method is one such
algorithm, which takes advantage of the availability of unbiased estimators
of the likelihood function to provide an exact approximation of an
idealized algorithm which computes the likelihood function exactly.
The algorithm simply consists of replacing the true value of the likelihood
function required to implement the standard Metropolis--Hastings (MH)
algorithm with estimators, but is nevertheless guaranteed to be correct
in that it leaves the required distribution of interest marginally
invariant. In PMMH, the estimator of the likelihood is a byproduct
of a sequential Monte Carlo (SMC) algorithm, whose accuracy can be
improved by increasing $N$. 

In contrast, the particle Gibbs (PGibbs) sampler~\citep{andrieu-doucet-holenstein}
involves approximating a Gibbs sampler which consists of constructing
a Markov chain $\{(\theta_{i},X_{i}),i\geq0\}$, by repeatedly sampling
from $\pi_{\theta}\big({\rm d}x\big)$ and $\pi_{x}\big({\rm d}\theta\big)$
in turn. In practice sampling from $\pi_{\theta}\big({\rm d}x\big)$
may be particularly difficult and the conditional SMC (cSMC)~\citep{andrieu-doucet-holenstein}
update is a Markov transition probability $P_{N,\theta}$ which leaves
$\pi_{\theta}\big({\rm d}x\big)$ invariant, therefore allowing the
implementation of a Metropolis-within-Gibbs algorithm, that is a Markov
transition probability leaving $\pi\big({\rm d}\theta\times{\rm d}x\big)$
invariant. The cSMC relies for its construction, as suggested by its
name, on an SMC-like procedure and it is expected that as $N$ increases
$P_{N,\theta}$ approaches $\pi_{\theta}\big({\rm d}x\big)$.

While PMMH methods have been studied in a series of papers~\citep{andrieu-roberts,andrieu2015,lee-latuszynski,sherlock2015,Doucet07032015},
a theoretical study of the PGibbs is still missing. Indeed it has
been shown that as $N$ increases, performance of the PMMH approaches
that of the exact MH algorithm but the question of the approximation
of the Gibbs sampler by a PGibbs has not been addressed to date. We
note however that a study of one of its components, the cSMC update,
has recently been undertaken in~\citep{chopin:singh:2013}, in which
a coupling argument is central to their analysis. We refer to the
Markov chain obtained by iterating the cSMC algorithm for a fixed
target distribution as iterated i-cSMC here in order to distinguish
it from that of the PGibbs. The present manuscript addresses questions
concerning the i-cSMC similar to those of~\citep{chopin:singh:2013},
but our results differ in many respects and complement their findings
in several directions. At a technical level our approach seems to
be more straightforward in the scenario considered, relies on weaker
assumptions for uniform convergence which we prove are necessary and
sufficient and lead to quantitative bounds on performance measures
in terms of the number $N$ of particles involved. We additionally
transfer sufficient conditions for uniform ergodicity of the i-cSMC
Markov chain into sufficient conditions for geometric ergodicity of
the associated PGibbs Markov chain, the main motivation behind our
work. This allows us in particular to show that under some conditions
PGibbs is asymptotically as efficient as the Gibbs sampler as the
number $N$ of particles increases.

Contemporary to the first version of the present manuscript~\citep{andrieu2013uniform},~\citep{lindsten_pg}
have also provided essentially the same sufficient conditions for
the uniform convergence of the i-cSMC Markov chain (Theorem~\ref{thm:THEtheorem},
Section~\ref{sec:The-i-CSMC}) using a different proof technique.
Here we have further established that the aforementioned conditions
are also necessary for uniform convergence in general, but also geometric
ergodicity in many realistic scenarios (Section~\ref{sec:conjectureonboundedness}).
Similarly to us~\citep{lindsten_pg} also provide quantitative bounds
and associated scaling properties of the i-cSMC, albeit for a different
set of specialised conditions (a detailed comparison of the assumptions
is provided after Theorem~\ref{thm:quantitative-on-epsilon_N} at
the end of Section~\ref{sec:The-i-CSMC}). We have also very recently
become aware of the contribution~\citep{del2014feynman} to the analysis
of the properties of the cSMC, established using the formalism of~\citep{delmoral:2004},
but their practical implications are unclear. Similarly to~\citep{chopin:singh:2013},~\citep{lindsten_pg}
do not attempt to address the practically important question of how
uniform ergodicity of the i-cSMC can be translated into geometric
ergodicity of the PGibbs sampler, an issue we address in Section~\ref{sec:The-particle-Gibbs}.
In Section~\ref{sec:Discussion} we contrast the results obtained
in this paper concerning the i-cSMC and PGibbs algorithm with known
results concerned with other particle MCMC methods and draw final
conclusions.

Similarly to SMC methods, the cSMC and associated algorithms are complex
mathematical objects which require the introduction of sometimes overwhelming
notation which may obscure the main ideas. In the next section we
attempt to remedy this by presenting our results in a simplified scenario,
which captures our main ideas, before moving on to the general scenario.

\section{Statement of our results in a simplified scenario}

We first explain our results on a particularly simple instance of
the i-cSMC algorithm. This should provide the reader with the essence
of the results proved later on in the general scenario, while its
simple structure will allow us to outline the main idea behind our
proof in the general set-up (in Section~\ref{sec:Minorization-and-Dirichlet}).

Assume we are interested in sampling from a probability distribution
$\pi$ on some measurable space $\bigl(\mathsf{X},\mathcal{B}\bigl(\mathsf{X}\bigr)\bigr)$.
We define the probability distribution $\tilde{\pi}$ on $\{1,\ldots,N\}\times\mathsf{X}^{N}$
\begin{equation}
\tilde{\pi}\left(k,{\rm d}z^{1:N}\right)=\frac{1}{N}\pi({\rm d}z^{k})\prod_{j=1,j\neq k}^{N}M({\rm d}z^{j})\quad,\label{eq:artificialforiSIR}
\end{equation}
for some probability distribution $M$ defined on $\bigl(\mathsf{X},\mathcal{B}\bigl(\mathsf{X}\bigr)\bigr)$
and such that for any $S\in\mathcal{B}\bigl(\mathsf{X}\bigr)$ such
that $\pi(S)>0$ then $M(S)>0$. As pointed out in the authors' discussion
reply of~\citep{andrieu-doucet-holenstein}, in this simple scenario
one can define an MCMC algorithm targeting $\pi$ by iterating the
classical sampling importance resampling (SIR) procedure. More specifically,
we sample alternately from (a) $Z^{1:N\setminus k}\mid\left(K=k,Z^{k}=z^{k}\right)\sim\prod_{i=1,i\neq k}^{N}M(z^{i})$
and (b) $K\mid\left(Z^{1:N}=z^{1:N}\right)\sim\tilde{\pi}(k|z^{1:N})$,
where $Z^{1:N\setminus k}:=\bigl(Z^{1},Z^{2},\ldots,Z^{k-1},Z^{k+1},\ldots,Z^{N}\bigr)$.
Owing to the fact that this algorithm is a Gibbs sampler on the distribution
above and from the standard interlacing property of the two stage
Gibbs sampler, one can check that the sequence $\{Z_{i}^{K_{i}}\}$
defines a Markov chain with invariant distribution $\pi$, and that
its transition kernel is for any $(x,S)\in\mathsf{X}\times\mathcal{B}\bigl(\mathsf{X}\bigr)$
\[
P_{N}(x,S)=\int_{\mathsf{X}^{N-1}}\sum_{k=1}^{N}\frac{G(z^{k})}{\sum_{j=1}^{N}G(z^{j})}\mathbb{I}\{z^{k}\in S\}\prod_{i=2}^{N}M(\mathrm{d}z^{i})
\]
with $G(x):=\pi({\rm d}x)/M({\rm d}x)$ and the convention $z^{1}=x$.
Our first results are concerned with properties of the homogeneous
Markov chain with transition probability $P_{N}$, in terms of $\bar{G}:=\pi-{\rm ess}\sup_{x}G(x)$
and $N$. We refer to the resulting algorithm as iterated SIR (i-SIR).

We briefly introduce notions that allow us to make quantitative statements
about the Markov chains under study. We use classical Hilbert space
techniques for the analysis of reversible Markov chains. Letting $\mu\bigl(\cdot\bigr)$
be a probability distribution defined on some measurable space $\bigl(\mathsf{E},\mathcal{B}\bigl(\mathsf{E}\bigr)\bigr)$,
we define the function space
\[
L^{2}(\mathsf{E},\mu):=\left\{ f:\mathsf{E}\rightarrow\mathbb{R}:\mu(f^{2})<\infty\right\} ,
\]
where the functions are taken to be measurable; hereafter all functions
considered are assumed to be measurable with respect to an appropriate
$\sigma$-algebra. Let $\Pi:\mathsf{E}\times\mathcal{B}\bigl(\mathsf{E}\bigr)\rightarrow[0,1]$
be a $\mu$-reversible Markov transition kernel and let $\{\xi_{i},i\geq0\}$
be the stationary Markov chain with transition kernel $\Pi$ (such
that $\xi_{0}\sim\mu$). We will use the standard notation for any
probability distribution $\nu$ on $\bigl(\mathsf{E},\mathcal{B}\bigl(\mathsf{E}\bigr)\bigr)$
and measurable function $f:\mathsf{E}\rightarrow\mathbb{R}$, 
\[
\nu\bigl(f\bigr):=\int_{\mathsf{E}}f(x)\nu({\rm d}x)\quad\text{,}\quad\Pi f(x):=\int_{\mathsf{E}}f(y)\Pi\bigl(x,{\rm d}y\bigr)\quad,
\]
for $k\geq2$, by induction,
\[
\Pi^{k}f(x):=\int_{\mathsf{E}}\Pi\bigl(x,{\rm d}y\bigr)\Pi^{k-1}f(y)\quad.
\]
We denote $\nu\Pi^{k}f:=\nu\bigl(\Pi^{k}f\bigr)$ and refer to $\nu\Pi^{k}$
as either a probability measure or its corresponding operator on $L^{2}(\mathsf{E},\mu)$.
For $f\in L^{2}\bigl(\mathsf{E},\mu\bigr)$, we define the variance
of $f$ under $\mu$ as ${\rm var}_{\mu}(f):=\mu(f^{2})-\mu(f)^{2}$
and the ``asymptotic variance'' of $M^{-1}\sum_{i=1}^{M}f\big(\xi_{i}\big)$
for stationary realizations $\{\xi_{i},i\geq0\}$ associated to the
homogeneous Markov chain with transition $\Pi$ as 
\[
\mathrm{var}(f,\Pi):=\lim_{M\rightarrow\infty}\mathrm{var}\left({\textstyle M^{-1/2}}{\textstyle \sum}_{i=1}^{M}[f(\xi_{i})-\mu(f)]\right)\quad.
\]
Some of our results involve norms of signed measures. As in, e.g.,
\citep{roberts-rosenthal-geometric}, for any signed measure $\nu$
on $\bigl(\mathsf{E},\mathcal{B}\bigl(\mathsf{E}\bigr)\bigr)$ we
let 
\[
\|\nu\|_{TV}:=\frac{1}{2}\sup_{f:\mathsf{E}\rightarrow[-1,1]}\nu\bigl(f\bigr)
\]
denote the total variation distance and for $\nu\ll\mu$, 
\begin{equation}
\Vert\nu\Vert_{L^{2}(\mathsf{E},\mu)}^{2}:=\int_{\mathsf{E}}\left|\frac{{\rm d}\nu}{{\rm d}\mu}\right|^{2}{\rm d}\mu=\sup_{f\in L^{2}\bigl(\mathsf{E},\mu\bigr),\:\|f\|_{\mu}>0}\frac{|\nu(f)|}{\|f\|_{\mu}}\quad.\label{eq:defL2norm}
\end{equation}
denote the $L^{2}(\mathsf{E},\mu)$ norm. 

Our results can be summarized as follows
\begin{enumerate}
\item $P_{N}$ is reversible with respect to $\pi$ and positive, that is
the i-SIR Markov chain has non-negative stationary autocorrelations. 
\item If $\bar{G}<\infty$, and $N\geq2$, the i-SIR Markov chain is uniformly
ergodic with for any $x\in\mathsf{X}$,
\[
\|P_{N}^{n}(x,\cdot)-\pi(\cdot)\|_{TV}\leq\left(1-\frac{N-1}{2\bar{G}+N-2}\right)^{n}\quad.
\]

\item If $\bar{G}<\infty$, then for any $f\in L^{2}(\mathsf{X},\pi)$,
\[
\mathrm{var}{}_{\pi}(f)\leq\mathrm{var}(f,P_{N})\leq\left[2\left(1+\frac{2\bar{G}-1}{N-1}\right)-1\right]\mathrm{var}{}_{\pi}(f)\quad.
\]

\item If $\bar{G}=\infty$ then the i-SIR Markov chain cannot be geometrically
ergodic for any finite $N$.
\end{enumerate}
The second and third points provide quantitative bounds on standard
measures of performance for MCMC algorithms, where the second provides
a bound on the uniform (or equivalently uniformly geometric) rate
of convergence of the Markov chain. Interest in algorithms such as
i-SIR is motivated empirically from observed behaviour in line with
the above bounds, as performance improves as $N$ increases, and part
of our purpose here is to confirm and quantify theoretically such
empirical successes. Moreover, this improvement can often be obtained
with little extra computational effort, since on a parallel architecture
one can sample from $M$ and evaluate $G$ in parallel, a characteristic
of SMC algorithms more generally~\citep{lee-yau-giles-doucet-holmes}.

While i-SIR can be used alone to sample from fairly general distributions,
it can also be used as a constituent element of more elaborate MCMC
schemes. Assume now that we wish to sample from a distribution $\pi$
defined on some measurable space $\left(\Theta\times\mathsf{X},\mathcal{B}(\Theta)\times\mathcal{B}\bigl(\mathsf{X}\bigr)\right)$,
often defined for some $S\in\mathcal{B}(\Theta)\times\mathcal{B}(\mathsf{X})$
via (note the different nature of $\pi$ as compared to earlier)
\[
\pi(S):=\frac{\int_{S}G_{\theta}(x)M_{\theta}({\rm d}x)\varpi({\rm d}\theta)}{\int_{\Theta\times\mathsf{X}}G_{\theta}(x)M_{\theta}({\rm d}x)\varpi({\rm d}\theta)}\quad,
\]
where $\{G_{\theta},\theta\in\Theta\}$ is a collection of non-negative
potential functions and $\{M_{\theta},\theta\in\Theta\}$ a collection
of probability measures which define for each $\theta\in\Theta$ the
conditional distributions $\pi_{\theta}\bigl({\rm d}x\bigr):=M_{\theta}\bigl({\rm d}x\bigr)G_{\theta}(x)/\gamma_{\theta}$
with
\[
\gamma_{\theta}:=\int_{\mathsf{X}}G_{\theta}(x)M_{\theta}({\rm d}x)\quad.
\]
The interpretation in a statistical context is that $\varpi$ is the
prior distribution for some parameter $\theta$ of interest, whilst
$\gamma_{\theta}$ is the likelihood function associated with some
observed data and $x$ corresponds to the so-called latent variable(s).
The form of $\gamma_{\theta}$ is often derived from the data being
explained by the latent variable $x$ whose \emph{a priori} distribution
conditional upon $\theta$ is $M_{\theta}$ and the likelihood function
given the data and $x$ is $G_{\theta}(x)$. Assume here that we are
able to sample from $\pi_{x}$, the conditional distribution of $\theta$
given $X=x$. For any $\theta\in\Theta$ one can define the i-SIR
kernel for any $(x,S)\in\mathsf{X}\times\mathcal{B}\bigl(\mathsf{X}\bigr)$
via
\[
P_{N,\theta}(x,S)=\int_{\mathsf{X}^{N-1}}\sum_{k=1}^{N}\frac{G_{\theta}(z^{k})}{\sum_{j=1}^{N}G_{\theta}(z^{j})}\mathbb{I}\{z^{k}\in S\}\prod_{i=2}^{N}M_{\theta}(\mathrm{d}z^{i})\quad,
\]
with $z^{1}=x$, so that the invariant distribution associated with
$P_{N,\theta}$ is $\pi_{\theta}$, the conditional distribution of
$X$ given $\theta$. One can sample from $\pi({\rm d}\theta\times{\rm d}x)$
with the following Markov transition, defined for any $\bigl(\theta_{0},x,S\bigr)\in\Theta\times\mathsf{X}\times\big(\mathcal{B}(\Theta)\times\mathcal{B}(\mathsf{X})\big)$
via 
\[
\Phi{}_{N}(\theta_{0},x;S):=\int_{S}P_{N,\theta}(x,{\rm d}y)\pi_{x}({\rm d}\theta)\quad,
\]
which can be viewed as an exact approximation of the Gibbs sampler
defined via
\[
\Gamma(\theta_{0},x;S):=\int_{S}\pi_{\theta}({\rm d}y)\pi_{x}({\rm d}\theta)\quad.
\]
The term exact approximation refers to the fact that while $P_{N,\theta}$
can be thought of as an approximation of the conditional distribution
$\pi_{\theta}$ the resulting algorithm converges to $\pi$ and can
be made arbitrarily close to $\Gamma$ as we increase $N$ as explained
below -- we will refer to this algorithm and its generalisation as
the particle Gibbs (PGibbs) sampler. Throughout the paper we will
use the following convention: we will say $f\in L^{2}\bigl(\mathsf{E},\pi\bigr)$
with $\mathsf{E}=\Theta$ (resp. $\mathsf{E}=\mathsf{X}$) to mean
that $f:\mathsf{E}\rightarrow\mathbb{R}$ is square integrable under
the relevant marginal of $\pi$, or $f:\Theta\times\mathsf{X}\rightarrow\mathbb{R}$
does not depend on $x$ (resp. $\theta$) and is square integrable
under the relevant marginal of $\pi$. This should not lead to any
possible confusion. Letting $\bar{G}:=\pi-{\rm ess}\sup_{\theta,x}\frac{G_{\theta}(x)}{\gamma_{\theta}}$,
our results for the PGibbs sampler, are as follows
\begin{enumerate}
\item Assume the $\Gamma$ Markov chain is such that there exists $\beta\in(0,1]$
such that for any $f:\mathsf{X}\rightarrow[-1,1]$ and $\nu\ll\pi$
\[
\left|\nu\Gamma^{n}(f)-\pi(f)\right|\leq\Vert\nu-\pi\Vert_{L^{2}(\mathsf{X},\pi)}\left(1-\beta\right)^{n}\quad.
\]
If $\bar{G}<\infty$, and $N\geq2$, then for any $f:\mathsf{X}\rightarrow[-1,1]$
and $\nu\ll\pi$
\[
\left|\nu\Phi_{N}^{n}(f)-\pi(f)\right|\leq\Vert\nu-\pi\Vert_{L^{2}(\mathsf{X},\pi)}\left(1-\beta_{N}'\right)^{n}\quad,
\]
where $\beta_{N}'$ satisfies
\[
\beta_{N}'\geq\frac{N-1}{2\bar{G}+N-2}\beta\quad.
\]

\item For any $f\in L^{2}(\mathsf{X},\pi)$ and $N\geq2$, the asymptotic
variance ${\rm var}(f,\Phi_{N})$ satisfies 
\[
{\rm var}\bigl(f,\Gamma\bigr)\leq{\rm var}(f,\Phi_{N})\leq\frac{2\bar{G}-1}{N-1}{\rm var}_{\pi}(f)+\left(1+\frac{2\bar{G}-1}{N-1}\right){\rm var}(f,\Gamma)\,.
\]

\item For any $f\in L^{2}\bigl(\Theta,\pi\bigr)$ and $N\geq2$, the asymptotic
variance ${\rm var}(f,\Phi_{N})$ satisfies 
\[
{\rm var}\bigl(f,\Gamma\bigr)\leq{\rm var}(f,\Phi_{N})\leq\left(1+\frac{2\bar{G}-1}{N-1}\right){\rm var}\bigl(f,\Gamma\bigr)-\left(\frac{2\bar{G}-1}{N-1}\right){\rm var}_{\pi}(f)\quad.
\]

\end{enumerate}
In the sequel, we prove similar results in the more general (and complex)
scenario where $P_{N,\theta}$ is defined by a general cSMC algorithm
with multinomial resampling, but the key ideas and results are similar
(Section~\ref{sec:The-i-CSMC}). The results concerning the general
form of the PGibbs sampler, from which its convergence in the sense
of points 1--3 above follows, can be found in Section~\ref{sec:The-particle-Gibbs}.

\section{The i-cSMC and its properties\label{sec:The-i-CSMC}}

We mostly follow the notation of~\citep{delmoral:2004} and use the
following conventions for lists, indices and superscripts. For $N\in\mathbb{N}$,
we denote $[N]:=\bigl\{1,\ldots,N\bigr\}$, and for any $p\in\mathbb{N}$,
$\mathbf{k},\mathbf{l}\in[N]^{p}$ and $u_{k}^{l}:\mathbb{N}^{2}\rightarrow\mathsf{E}$
(for a generic set $\mathsf{E}$ dependent on the context) we will
use the notation $u_{\mathbf{k}}^{\mathbf{l}}$ to mean $\bigl(u_{k_{1}}^{l_{1}},u_{k_{2}}^{l_{2}},\dots,u_{k_{p}}^{l_{p}}\bigr)$,
and whenever there is no dependence on $l$ (resp. $k$) of $u_{k}^{l}$
we simply ignore this superscript (resp. this index). We will also
use the notation, for $k,l\in\mathbb{N}$ such that $l\geq k$, $k:l:=\bigl(k,k+1,\ldots,l\bigr)$.
Let $\bigl(\mathsf{Z},\mathcal{B}\bigl(\mathsf{Z}\bigr)\bigr)$ be
a measurable space and for some $T\geq1$ define a family of Markov
transition probabilities on this space $\bigl\{ M_{t}\bigl(\cdot,\cdot\bigr),t\in[T]\bigr\}$
with the convention that for $t=1$ and any $z\in\mathsf{Z}$, $M_{1}(z,{\rm d}u)=M_{1}({\rm d}u)$
and a family of measurable non-negative functions, the potentials
$G_{t}:\mathsf{Z}\rightarrow[0,\infty)$, again for $t\in[T]$. We
first define an inhomogeneous Markov chain $\{Z_{1},\ldots,Z_{T}\}$
on $\mathsf{X}:=\mathsf{Z}^{T}$ endowed with the product $\sigma-$algebra
$\mathcal{B}\bigl(\mathsf{X}\bigr)=\mathcal{B}(\mathsf{Z})^{T}$ and
with probability distribution $\mathbb{P}\bigl(\cdot\bigr)$ and associated
expectation $\mathbb{E}\bigl(\cdot\bigr)$ such that for $t=1$, the
initial distribution is $\mathbb{P}\left(Z_{1}\in{\rm d}z_{1}\right):=M_{1}({\rm d}z_{1})$,
and for $t=2,\ldots,T$ the transition probability is given by $M_{t}$,
i.e. 
\[
\qquad\mathbb{P}\left(Z_{t}\in{\rm d}z_{t}\middle|Z_{t-1}=z_{t-1}\right):=M_{t}(z_{t-1},{\rm d}z_{t})\quad.
\]
We define for $p\in[T]$ and $f_{p}:\mathsf{Z}^{p}\rightarrow\mathbb{R}$
\[
\gamma_{p}(f_{p}):=\mathbb{E}\left(f_{p}(Z_{1},\ldots,Z_{p})\prod_{t=1}^{p}G_{t}\bigl(Z_{t}\bigr)\right),
\]
and can define for any $S\in\mathcal{B}(\mathsf{X})$ the probability
distribution $\pi$ (which will be the target distribution of interest)
\begin{equation}
\pi(S):=\frac{\gamma_{T}(\mathbb{I}\bigl\{\,\cdot\,\in S\bigr\})}{\gamma_{T}}\quad,\label{eq:defofpiforSMCframework}
\end{equation}
where $\mathbb{I}\bigl\{\cdot\bigr\}$ denotes the indicator function
and $\gamma_{T}:=\gamma_{T}\bigl(1\bigr)$. For $l>k\ge0$, we define
\[
M_{k,l}(z_{k},{\rm d}z_{k+1:l}):=\prod_{t=k+1}^{l}M_{t}(z_{t-1},{\rm d}z_{t})\quad.
\]
Note in particular that with the convention above, for any $l\geq2$
and $z_{0}\in\mathsf{Z}$, $M_{0,l}\bigl(z_{0},{\rm d}z_{1:l}\bigr):=M_{1}({\rm d}z_{1})\times M_{1,l}\bigl(z_{1},{\rm d}z_{2:l}\bigr)$.

The iterated conditional SMC (i-cSMC) is a family of homogeneous Markov
chains, with state-space $\bigl(\mathsf{X},\mathcal{B}\bigl(\mathsf{X}\bigr)\bigr)$,
indexed by $N\in\mathbb{N}$ (the concrete meaning of $N$ shall become
clearer below). We denote by $P_{N}\bigl(\cdot,\cdot\bigr):\mathsf{X}\times\mathcal{B}\bigl(\mathsf{X}\bigr)\rightarrow[0,1]$
the corresponding Markov transition kernels, which we now define.
To that end, we first detail for any $N\in\mathbb{N}$ the probability
distribution of the conditional SMC (cSMC) algorithm, which corresponds
to a process defined on the extended space $\mathsf{W}:=\bigl(\mathsf{Z}^{N}\times[N]^{N}\bigr)^{T-1}\times\mathsf{Z}^{N}\times[N]$
endowed with the corresponding product $\sigma-$algebra $\mathcal{B}\bigl(\mathsf{W}\bigr)$,
of which $P_{N}$ is a simple by-product. Our focus is on a particular
implementation of the algorithm corresponding to ``multinomial resampling''--other
schemes are considered in~\citep{chopin:singh:2013}. For any $x\in\mathsf{X}$
and with $\mathbf{1}\in\{1\}^{T}$ we define the process $\{Z_{t},A_{t},t=1,\ldots,T\}$
on $\mathsf{W}$ through 
\begin{align}
\mathbb{P}_{\mathbf{1},x}^{N}\left(Z_{1}\in{\rm d}z_{1}\right): & =\delta_{x_{1}}({\rm d}z_{1}^{1})\prod_{i=2}^{N}M_{1}({\rm d}z_{1}^{i})\label{eq:def_P_=00007B1,x=00007D-time-1}
\end{align}
 and for $t\in\{2,\ldots,T\}$
\begin{eqnarray}
 &  & \mathbb{P}_{\mathbf{1},x}^{N}\big(Z_{t}\in{\rm d}z_{t},A_{t-1}=a_{t-1}\,\big|\,Z_{1:t-1}=z_{1:t-1},A_{1:t-2}=a_{1:t-2}\big)\nonumber \\
 &  & \mathbb{=P}_{\mathbf{1},x}^{N}\left(Z_{t}\in{\rm d}z_{t},A_{t-1}=a_{t-1}\left|Z_{t-1}=z_{t-1}\right.\right)\nonumber \\
 &  & =\delta_{x_{t}}({\rm d}z_{t}^{1})\mathbb{I}\{a_{t-1}^{1}=1\}\prod_{i=2}^{N}\Bigg(\sum_{k=1}^{N}\frac{G_{t-1}(z_{t-1}^{k})}{\sum_{j=1}^{N}G_{t-1}(z_{t-1}^{j})}\mathbb{I}\left\{ a_{t-1}^{i}=k\right\} M_{t}(z_{t-1}^{k},{\rm d}z_{t}^{i})\Bigg)\quad,\label{eq:def_P_=00007B1,x=00007D-other-times}
\end{eqnarray}
where we keep $k$ to emphasize that we are sampling from that mixture.
For the last iteration we only require one index and point out that
whereas $A_{t}\in[N]^{N}$ for $t=1,\ldots,T-1$, we have $A_{T}\in[N]$
following 
\[
\mathbb{P}_{\mathbf{1},x}^{N}\left(A_{T}=k\left|Z_{T}=z_{T}\right.\right)={\displaystyle {\textstyle \frac{{\displaystyle G_{T}(z_{T}^{k})}}{{\displaystyle {\textstyle \sum_{j=1}^{N}}G_{T}(z_{T}^{j})}}\quad.}}
\]

The stochastic process defined by $\mathbb{P}_{{\bf 1},x}^{N}$ is
referred to as the conditional SMC algorithm because it is closely
related to a standard SMC algorithm, but where $x$ is a ``fixed
path'' with lineage ${\bf 1}$. However, as remarked in~\citep{andrieu-doucet-holenstein},
$\mathbb{P}_{\mathbf{1},x}^{N}$ is not a conditional distribution
of $\mathbb{P}^{N}\bigl(\cdot\bigr)$, the standard SMC algorithm
whose definition here is deferred to~\citep[Appendix~\ref{sec:Comparison-with-Particle}]{andrieu2015uniformsupplement}.
We note further that in order to simplify presentation we have focused
here on the scenario where the lineage of $x$ was ${\bf 1}$ but
that we could also use, as in~\citep{andrieu2015uniformsupplement},
the cSMC with ${\bf k}\in[N]^{T}$ (with associated symbol $\mathbb{P}_{\mathbf{k},x}^{N}$
and $\mathbb{E}_{\mathbf{k},x}^{N}$) corresponding to the process
above, but where $\delta_{x_{t}}({\rm d}z_{t}^{1})\mathbb{I}\{a_{t-1}^{1}=1\}$
in (\ref{eq:def_P_=00007B1,x=00007D-other-times}) is replaced with
$\delta_{x_{t}}({\rm d}z_{t}^{k_{t}})\mathbb{I}\{a_{t-1}^{k_{t}}=k_{t-1}\}$
and $\delta_{x_{1}}({\rm d}z_{1}^{1})$ with $\delta_{x_{1}}({\rm d}z_{1}^{k_{1}})$
in (\ref{eq:def_P_=00007B1,x=00007D-time-1}).

For any $\mathbf{i}:=\bigl(i_{1},i_{2},\ldots,i_{T}\bigr)\in[N]^{T}$,
$z_{1:T}\in\bigl(\mathsf{Z}^{N}\bigr)^{T}$, $a_{1:T}:=(a_{1},\ldots,a_{T})\in\bigl([N]^{N}\bigr)^{T-1}\times[N]$
and $S\in\mathcal{B}\bigl(\mathsf{X}\bigr)$ define
\begin{equation}
I_{\mathbf{i}}\bigl(z_{1:T},a_{1:T},S\bigr):=\mathbb{I}\{z_{1:T}^{\mathbf{i}}\in S,i_{T}=a_{T}\}\prod_{t=1}^{T-1}\mathbb{I}\{i_{t}=a_{t}^{i_{t+1}}\}\quad.\label{eq:def_of_I_i}
\end{equation}
Then the transition kernel of the iterated conditional SMC (i-cSMC),
in the multinomial sampling scenario, is given for any $x\in\mathsf{X}$
and $S\in\mathcal{B}\left(\mathsf{X}\right)$ by 
\begin{equation}
P_{N}(x,S):=\mathbb{E}_{\mathbf{1},x}^{N}\left[{\textstyle \sum_{\mathbf{i}\in[N]^{T}}}I_{\mathbf{i}}\bigl(Z_{1:T},A_{1:T},S\bigr)\right]\quad,\label{eq:PN_defn}
\end{equation}
that is, conditional upon $x$ we consider the probability distribution
of those trajectories $Z_{1:T}^{\mathbf{i}}$ generated by the cSMC
which form a lineage compatible with the lineages defined by the random
variables $A_{1:T}$. Our main results concerning the i-cSMC algorithm
are the following (our results concerning the particle Gibbs sampler
are provided in Section~\ref{sec:The-particle-Gibbs}). We will denote
by $\pi_{t}$ the corresponding marginal distribution of $\pi$ (see
(\ref{eq:definitionpi_t:u}) for a precise definition).
\begin{thm}
\label{thm:THEtheorem}For $N\geq2$ the i-cSMC algorithm with kernel
$P_{N}$
\begin{enumerate}[label=(\alph*)]
\item is reversible with respect to $\pi$ and defines a positive operator,
\item \label{enu:GboundedResults}if for all $t\in\{1,\ldots,T\}$ $\pi_{t}-{\rm ess}\sup_{z_{t}}G_{t}(z_{t})<\infty$
then there exists $\epsilon_{N}>0$ such that

\begin{enumerate}[label=(\roman*)]
\item \label{enu:minorization} for any $(x,S)\in\mathsf{X}\times\mathcal{B}\bigl(\mathsf{X}\bigr)$,
\[
P_{N}(x,S)\geq\epsilon_{N}\pi(S)\quad,
\]
where $1-\epsilon_{N}=O(1/N)$,
\item \label{enu:uniformconvergence}for any probability distribution $\nu\ll\pi$
on $\bigl(\mathsf{X},\mathcal{B}(\mathsf{X})\bigr)$ and $k\geq1$
\[
\|\nu P_{N}^{k}\bigl(\cdot\bigr)-\pi\bigl(\cdot\bigr)\|_{L^{2}(\mathsf{X},\pi)}\leq\|\nu-\pi\|_{L^{2}(\mathsf{X},\pi)}(1-\epsilon_{N})^{k}\quad,
\]

\item for any $x\in\mathsf{X}$
\[
\|\delta_{x}P_{N}^{k}\bigl(\cdot\bigr)-\pi\bigl(\cdot\bigr)\|_{TV}\leq(1-\epsilon_{N})^{k}\quad,
\]

\item \label{enu:upperboundasymptvar}for any $f\in L^{2}\bigl(\mathsf{X},\pi\bigr)$
\[
\mathrm{var}{}_{\pi}(f)\leq\mathrm{var}(f,P_{N})\leq\left[2\epsilon_{N}^{-1}-1\right]\mathrm{var}{}_{\pi}(f)\quad.
\]

\end{enumerate}
\item \label{enu:iff-condition}if $\pi_{t}$-${\rm ess}\sup_{z_{t}}G_{t}(z_{t})=\infty$
for some $t\in[T]$, then, the i-cSMC kernel $P_{N}$ is not uniformly
ergodic for any $N\in\mathbb{N}$,
\item \label{enu:non-geometric-statement}if $\pi_{t}$-${\rm ess}\sup_{z_{t}}G_{t}(z_{t})=\infty$
for some $t\in[T]$ then, the i-cSMC kernel $P_{N}$ cannot be geometrically
ergodic for any $N\in\mathbb{N}$ if $\pi$ is equivalent to a Lebesgue
or counting measure on $\mathsf{X}$.
\end{enumerate}
\end{thm}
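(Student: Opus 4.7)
For part (a), I would realise the i-cSMC update as a two-stage Gibbs sampler on an enlarged target, generalising the i-SIR construction of Section~2. One defines an artificial distribution $\tilde{\pi}$ on $\mathsf{W}\times[N]^{T}$ whose marginal on the coordinates picked out by the lineage is $\pi$, and checks that $P_{N}$ is exactly the composition of the two conditional-expectation operators of this Gibbs sampler. Reversibility is automatic from the symmetric block structure, and writing $P_{N}=E_{1}E_{2}$ as a product of $\tilde{\pi}$-self-adjoint projections shows that $P_{N}$ is similar to $E_{2}E_{1}E_{2}=(E_{1}E_{2})^{*}(E_{1}E_{2})$, hence a positive operator on $L^{2}(\pi)$.

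The heart of the theorem, and where the main obstacle lies, is the minorization (b)(i). Starting from
\[
P_{N}(x,S)=\mathbb{E}_{\mathbf{1},x}^{N}\Bigl[\sum_{\mathbf{i}\in[N]^{T}}I_{\mathbf{i}}(Z_{1:T},A_{1:T},S)\Bigr],
\]
I would split the sum according to whether the selected lineage $\mathbf{i}$ is disjoint from the conditioned lineage $\mathbf{1}$. On the disjoint event, the selected trajectory is conditionally distributed as in a cSMC doubly conditioned on both $\mathbf{1}$ and $\mathbf{i}$, and a Fubini-type rearrangement extracts a factor $\pi(S)$ multiplied by the expectation of the ratio of doubly-conditioned to singly-conditioned normalizing-constant estimators. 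Under $\pi_{t}\text{-}\mathrm{ess\,sup}\,G_{t}<\infty$ for every $t$, this ratio is uniformly bounded in $x$, and the counting of the $(N-1)^{T}/N^{T}$ fraction of disjoint lineages produces an $\epsilon_{N}$ with $1-\epsilon_{N}=O(1/N)$. Controlling the doubly-conditional estimator sharply is the delicate step; everything else in this part reduces to it.

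Parts (b)(ii)--(iv) should then follow from (a) and (b)(i) by routine reversible-chain arguments. The decomposition $P_{N}=\epsilon_{N}\Pi+(1-\epsilon_{N})Q$, where $\Pi(x,\cdot)=\pi$ and $Q$ is Markov, immediately yields $\|P_{N}f\|_{L^{2}(\pi)}\leq(1-\epsilon_{N})\|f\|_{L^{2}(\pi)}$ for $f\in L_{0}^{2}(\pi)$, proving (ii); (iii) is the standard Doeblin coupling applied to the same minorization. For (iv), positivity forces the spectrum of $P_{N}$ on $L_{0}^{2}(\pi)$ to lie in $[0,1-\epsilon_{N}]$, and the spectral representation for positive reversible operators bounds the integrated autocorrelation by $(1+\lambda)/(1-\lambda)$ evaluated at $\lambda=1-\epsilon_{N}$, which equals $2\epsilon_{N}^{-1}-1$.

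For (c) and (d), the strategy is to exploit the unboundedness of some $G_{t}$ to build pathological initial states. Given $\delta>0$, essential unboundedness of $G_{t}$ lets me choose $x\in\mathsf{X}$ so that $G_{t}(x_{t})$ dwarfs the weights of all typical proposal draws; inspection of the selection probabilities then shows that with probability at least $1-\delta$ the output trajectory agrees with $x$ at time $t$. This forbids any uniform minorization $P_{N}(x,\cdot)\geq\epsilon\pi$, proving (c). For (d), equivalence of $\pi$ with a product of Lebesgue or counting measures allows me to inflate this single state into a sequence of sets $B_{n}$ with $\pi(B_{n})>0$, $\pi(B_{n})\to 0$ and $\inf_{x\in B_{n}}P_{N}(x,B_{n})\to 1$. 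Testing $P_{N}$ against the centred normalised indicators $\mathbf{1}_{B_{n}}/\sqrt{\pi(B_{n})}$ gives $\|P_{N}\|_{L_{0}^{2}(\pi)}=1$, so there is no $L^{2}$ spectral gap; since positive reversible kernels are geometrically ergodic if and only if they have a spectral gap, geometric ergodicity fails.
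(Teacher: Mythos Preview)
Your plan for (a), (b) and (d) is essentially the paper's own route: the extended-target two-stage Gibbs representation for reversibility and positivity; for (b)(i) the restriction to lineages disjoint from $\mathbf{1}$, the doubly-conditional SMC identity, Jensen's inequality, and then a uniform bound on $\mathbb{E}_{\mathbf{1},x,\mathbf{2},y}^{N}[\hat\gamma_T^N]$ under boundedness of the potentials, with (b)(ii)--(iv) following from standard facts about positive reversible kernels satisfying a Doeblin minorization; and for (d) sticky sets of arbitrarily small positive $\pi$-measure combined with a conductance/spectral-gap argument, the equivalence of $\pi$ with Lebesgue or counting measure being used to ensure the product sets have positive $\pi$-measure.

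The genuine gap is in (c). Your one-step stickiness statement---for each $\delta>0$ there exists $x$ with $P_N(x,\{y:y_t=x_t\})\ge 1-\delta$---only rules out a \emph{one-step} uniform minorization of $P_N$ by $\pi$. It does not rule out uniform ergodicity, which is equivalent merely to the whole space being small for \emph{some} iterate $P_N^m$ with respect to \emph{some} measure. To contradict uniform ergodicity you must show that for every $n$ there is an $x$ with $\|P_N^{n}(x,\cdot)-\pi\|_{TV}$ bounded away from zero, and your one-step bound does not iterate: after one step the state $y$ may well have $G_t(y_t)$ of ordinary size, and then nothing prevents mixing. The paper closes this by proving two things simultaneously: when $G_t(x_t)$ is large the chain is unlikely to change its first coordinate \emph{and} $G_t$ of the next state is unlikely to drop below a fixed fraction of $G_t(x_t)$, so the stickiness propagates for $n$ steps. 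A related point you gloss over is that in the general cSMC the law of the proposals $Z_t^{j}$ for $j\neq 1$ depends on $x_{1:t-1}$ through the resampling history, so the assertion ``$G_t(x_t)$ dwarfs typical proposal weights'' is not immediate; the paper handles this via an $N$-ary tree coupling that dominates the relevant sum by a random variable independent of $x$.
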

\begin{rem}
From Lemma~\ref{lem:full-supp}, statement~\ref{enu:non-geometric-statement}
holds under a more abstract assumption, but we have chosen this explicit
simplified statement for clarity at this point. In fact we suspect
that~\ref{enu:non-geometric-statement} holds under the assumption
$\pi_{t}-{\rm ess}\sup_{z_{t}}G_{t}(z_{t})=\infty$ for some $t\in[T]$
only, that is essential boundedness is a necessary condition for geometric
ergodicity; see Conjecture~\ref{conj:general-necessity}.
\end{rem}
With additional conditions on $\{M_{t},G_{t},t=1,\ldots\}$ one can
characterize $\epsilon_{N}$ in Theorem~\ref{thm:THEtheorem}\ref{enu:GboundedResults}
further, and in particular characterize the rate at which $N$ should
grow in terms of $T$ in order to maintain a set level of performance.
This also requires additional notation and following~\citep{delmoral:2004}
we define for any $z\in\mathsf{Z}$, $p,q\in\mathbb{N}$, $p\leq q$
and $f_{q}:\mathsf{Z}\rightarrow\mathbb{R}$,
\[
Q_{p,q}\bigl(f_{q}\bigr)\bigl(z\bigr):=\mathbb{E}\left[f_{q}\bigl(Z_{q}\bigr)\prod_{k=p}^{q-1}G_{k}\bigl(Z_{k}\bigr)\,\Bigg|\,Z_{p}=z\right]\quad,
\]
and with the convention $Q_{0,p}(f_{p})(x)=M_{1}Q_{1,p}(f_{p})$ for
any $f_{p}:\mathsf{Z}\rightarrow\mathbb{R}$, and

\[
\eta_{p}(f_{p}):=\frac{Q_{0,p}(f_{p})}{Q_{0,p}(1)}
\]
and $\bar{M}_{p,p+1}\bigl(z,\cdot\bigr)=M_{p+1}\bigl(z,\cdot\bigr)$
and for $q>p\geq0$ we have the recursive definition, for any $z_{p}\in\mathsf{Z}$,
\[
\bar{M}_{p,q}\bigl(z_{p},\cdot\bigr)=\int M_{p+1}(z_{p},{\rm d}z_{p+1})\bar{M}_{p+1,q}\bigl(z_{p+1},\cdot\bigr)\quad.
\]
The first condition is rather abstract, and can be viewed as a condition
on the $h$-functions investigated in~\citep{whiteley_stability}
in the context of stability properties of standard SMC algorithms.
\begin{condition}
\label{hyp:mixingabstract}There exists a constant $\alpha>0$ such
that for any $p,k\in\mathbb{N}$, 
\[
\sup_{z\in\mathsf{Z}}\frac{Q_{p,p+k}(1)(z)}{\eta_{p}Q_{p,p+k}(1)}\leq\alpha\quad.
\]

\end{condition}

\noindent One can however show that (A\ref{hyp:mixingabstract})
is implied by the following stronger assumption (see Lemma~\ref{lem:A2impliesA1}).
\begin{condition}[Strong mixing conditions]
\label{hyp:strongmixingpotentialassumptions}There exists $m\in\mathbb{Z}_{+}$
such that 
\begin{enumerate}[label=(\alph*)]
\item \label{hyp:enu:Mcondition}There exists a constant $1\leq\beta<\infty$
such that for any $p\geq1$ and any $(z,z')\in\mathsf{Z}$ and $S\in\mathcal{B}(\mathsf{Z})$,
\[
\bar{M}_{p,p+m}(z,S)\leq\beta\bar{M}_{p,p+m}(z',S)\;.
\]

\item \label{hyp:enu:Gcondition}The potential functions $G_{p}$ satisfy,
for some $\delta<\infty$,
\[
1\leq\sup_{z,z'\in\mathsf{Z}^{2},p\in\{1,\ldots,T\}}\frac{G_{p}(z)}{G_{p}(z')}\leq\delta^{1/m}\quad.
\]

\end{enumerate}
\end{condition}
\begin{thm}
\label{thm:quantitative-on-epsilon_N}Assume that for all $t\in\mathbb{N}$
$\pi_{t}-{\rm ess}\sup_{z_{t}}G_{t}(z_{t})<\infty$ and (A\ref{hyp:mixingabstract})
(or the stronger assumption (A\ref{hyp:strongmixingpotentialassumptions}))
holds. Then with $\epsilon_{N}$ as in Theorem~\ref{thm:THEtheorem}\ref{enu:GboundedResults}
for any $N\geq2$, there exists $C,\varepsilon>0$ such that with
$N=C\times T$, then for any $T\geq1$, $\epsilon_{N}\geq\varepsilon>0$.\end{thm}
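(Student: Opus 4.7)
The plan is to revisit the proof of Theorem~\ref{thm:THEtheorem}\ref{enu:GboundedResults}\ref{enu:minorization} and extract an explicit, tractable expression for $\epsilon_N$ whose dependence on $T$ is transparent. The abstract of the paper advertises that the sufficiency argument goes through a ``non-asymptotic analysis of the expectation of a standard normalizing constant estimate with respect to a doubly conditional SMC algorithm'', so the natural formulation should be something of the shape
\[
\epsilon_N^{-1} \leq \mathbb{E}_{\mathbf{1},X}^{N}\!\left[\Psi_N(Z_{1:T},A_{1:T-1})\right],
\]
where $X\sim\pi$ and $\Psi_N$ is a (ratio of) normalizing constant style functional whose expectation I can hope to bound uniformly in $T$ once $N\gtrsim T$. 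The first step, therefore, is simply to identify $\Psi_N$ explicitly from the minorization inequality, and to observe that in the i-SIR caricature this reduces to the already known expression $\epsilon_N^{-1}=1+(2\bar G-1)/(N-1)$, so that the sought-after $T$-independent bound amounts to propagating that same structure along the $T$ time steps of the cSMC.

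Next I would bound $\mathbb{E}_{\mathbf{1},X}^{N}[\Psi_N]$ using the mixing hypothesis. Under (A\ref{hyp:mixingabstract}), the ratios $Q_{p,p+k}(1)(z)/\eta_p Q_{p,p+k}(1)$ are uniformly bounded by $\alpha$, which is exactly the kind of stability input Del Moral-type estimates need in order to turn a single-step variance contribution of order $1/N$ into a telescoping sum of $T$ such contributions (rather than the exponential-in-$T$ blow-up one would get without mixing). Concretely, I expect to decompose $\Psi_N-1$ along the time index $t=1,\dots,T$, show that each term contributes at most $C(\alpha)/N$ in expectation under $\mathbb{P}_{\mathbf{1},X}^{N}$ thanks to (A\ref{hyp:mixingabstract}), and conclude $\mathbb{E}_{\mathbf{1},X}^{N}[\Psi_N]\leq 1+C(\alpha)T/N$.

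Given such a bound, the theorem follows immediately: setting $N=CT$ with $C$ chosen so that $C(\alpha)/C\leq\tfrac{1}{2}$ yields $\epsilon_N^{-1}\leq 1+C(\alpha)/C\leq 3/2$, hence $\epsilon_N\geq\varepsilon:=2/3$, uniformly in $T$. The second hypothesis (A\ref{hyp:strongmixingpotentialassumptions}) is handled for free: by Lemma~\ref{lem:A2impliesA1} it implies (A\ref{hyp:mixingabstract}) with an explicit $\alpha=\alpha(m,\beta,\delta)$, so the same argument applies with constants depending only on $m,\beta,\delta$.

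The principal obstacle I foresee is the second step, i.e.\ the non-asymptotic control of $\mathbb{E}_{\mathbf{1},X}^{N}[\Psi_N]$ under (A\ref{hyp:mixingabstract}). Classical many-body $L^p$-estimates for SMC normalizing constants are stated for the \emph{unconditional} SMC algorithm, whereas $\Psi_N$ lives under a \emph{doubly conditional} law (the retained trajectory is $X\sim\pi$, and a second conditioning is typically imposed to express the minorization constant); re-running the telescoping/martingale arguments in this conditional setting, while making sure the mixing constant $\alpha$ enters only multiplicatively rather than compounding over time, is the delicate combinatorial and analytic heart of the argument. Once this is in place, the packaging into $N=CT\Rightarrow\epsilon_N\geq\varepsilon$ is routine.
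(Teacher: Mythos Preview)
Your plan is broadly aligned with the paper's architecture: both routes pass through the doubly conditional SMC representation of the minorization constant and then bound the expectation of the normalizing-constant estimate using the mixing hypothesis (A\ref{hyp:mixingabstract}), with (A\ref{hyp:strongmixingpotentialassumptions}) reduced to (A\ref{hyp:mixingabstract}) via Lemma~\ref{lem:A2impliesA1}). The packaging step at the end ($N=CT\Rightarrow\epsilon_N\geq\varepsilon$) is the same.

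The technical mechanism differs, however. The paper does \emph{not} telescope $\Psi_N-1$ along time; instead it establishes an exact closed-form combinatorial expression for $\mathbb{E}_{\mathbf{1},x,\mathbf{2},y}^{N}[\hat{\gamma}_{T}^{N}]$ (Proposition~\ref{prop:doubleconditionalSMC}) by backward induction, then bounds each factor $G_{i_m,i_{m+1}}(\cdot)/[\eta_{i_m}G_{i_m,i_{m+1}}]\leq\alpha$ under (A\ref{hyp:mixingabstract}) and applies the binomial theorem (Proposition~\ref{prop:mixing_bound}). This yields the \emph{multiplicative} bound
\[
\epsilon_N^{-1}\leq\Bigl(1+\tfrac{2\alpha-1}{N-1}\Bigr)^{T},
\]
not the additive $1+C(\alpha)T/N$ you anticipate. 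Your additive form is in fact strictly sharper and is only the first-order truncation of the product; a careful execution of your telescoping idea would almost certainly recover the product form, because the cross terms do not cancel. This does not matter for the theorem---with $N-1\geq CT$ the product is bounded by $\exp\bigl((2\alpha-1)/C\bigr)$ (Corollary~\ref{cor:linear_in_T_epsilon_N_bound})---but you should adjust the claimed intermediate estimate accordingly.

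A minor notational point: your initial display writes the relevant expectation under the \emph{singly} conditional law $\mathbb{P}_{\mathbf{1},X}^{N}$ with $X\sim\pi$. The paper's minorization (Proposition~\ref{prop:uniformminorizationcrude}) is genuinely doubly conditional, $\mathbb{P}_{\mathbf{1},x,\mathbf{2},y}^{N}$, with a supremum over $(x,y)$, and carries an additional factor $(1-1/N)^{T}$ from discarding lineages that coalesce with the frozen path. You acknowledge this later in the obstacle paragraph, but it should already be present in the identification of $\Psi_N$.
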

\begin{rem}
Similar results for the PGibbs sampler are provided in Section~\ref{sec:The-particle-Gibbs}.
\end{rem}

\begin{proof}[Proof of Theorem~\ref{thm:THEtheorem} ]
The proofs of the various results are the subject of the following
sections. More specifically, statement 
\begin{enumerate}[label=(\alph*)]
\item follows from Lemma~\ref{lem:selfajoint_positive} (the latter property
was established in~\citep{chopin:singh:2013} and the former noted/proved
in~\citep{andrieu-doucet-holenstein,chopin:singh:2013}),
\item all parts follow from Corollary~\ref{cor:convergence_epsilon} and~\citep[Proposition~\ref{prop:boundsspectralgapandvarianceforP_N}]{andrieu2015uniformsupplement},
which gathers generic results on $\pi-$invariant Markov chains satisfying
\ref{enu:GboundedResults}\ref{enu:minorization},
\item follows from Proposition~\ref{prop:non-uniform},
\item follows from Proposition~\ref{prop:non-geom-easy} and Lemma~\ref{lem:full-supp};
Remark~\ref{rem:strictly_positive}.
\end{enumerate}
\end{proof}

\begin{proof}[Proof of Theorem~\ref{thm:quantitative-on-epsilon_N} ]
Follows from Proposition~\ref{prop:mixing_bound}, Corollary~\ref{cor:linear_in_T_epsilon_N_bound}
and Lemma~\ref{lem:A2impliesA1}.
\end{proof}
As pointed out in the introduction, soon after completing this work
we have become aware of~\citep{lindsten_pg}, where a subset of our
results have also been independently discovered. This motivates the
following comparison. Result~\ref{enu:GboundedResults}\ref{enu:minorization}
of Theorem~\ref{thm:THEtheorem} is identical to Theorem~1 of~\citep{lindsten_pg},
but relies on a different proof. Results~\ref{enu:GboundedResults}\ref{enu:uniformconvergence}--\ref{enu:upperboundasymptvar}
rely on standard arguments, although~\ref{enu:upperboundasymptvar}
does not seem to be well known and establishes informative quantitative
bounds. The study of the necessity of our conditions to imply uniform
or geometric ergodicity is not addressed in~\citep{lindsten_pg}.
The result of Theorem~\ref{thm:quantitative-on-epsilon_N} corresponds
to Proposition~5 of~\citep{lindsten_pg}. The conditions under which
Theorem~\ref{thm:quantitative-on-epsilon_N} holds are rather stringent
for some applications, in particular in the state-space model scenario.
As discussed by~\citep{lindsten_pg} in that scenario (A\ref{hyp:strongmixingpotentialassumptions})
will essentially only hold in the case where $\mathsf{X}$ is compact.
The condition (A\ref{hyp:mixingabstract}) is weaker and more natural
in our analysis, but is not currently easy to verify in applications
except through (A\ref{hyp:strongmixingpotentialassumptions}).

In an attempt to relax (A\ref{hyp:strongmixingpotentialassumptions}),
the authors of~\citep{lindsten_pg} investigate another set of specialised
assumptions guaranteeing that the result of Theorem~\ref{thm:quantitative-on-epsilon_N}
holds even in some non-compact scenarios provided the number of particles
$N$ grows at a rate $T^{1/\gamma}$ for any $\gamma\in(0,1)$, a
result in line with what is obtained with the stronger assumption
(A\ref{hyp:strongmixingpotentialassumptions}), for which $\gamma=1$
is permissible. This requires the specification of a ``moment assumption''
which aims at controlling the variations of the various quantities
involved under the law of the observation process $\{Y_{t},t\geq0\}$.
Their approach, however, does not seem to allow one to consider the
scaling properties of the PGibbs sampler (i.e. not just the i-cSMC);
see their Theorem~6 and Remark~7.  More importantly we note that
their results require the law of the data to coincide with that of
the specified model for some $\theta^{\star}\in\Theta$ which, although
suggestive of what may happen in practice, is always an idealization.
This delicate work is the main focus of the remainder of their investigation
while here, in addition to establishing the necessity of some of the
conditions, we have focused on the transference of the results obtained
for the i-cSMC to the PGibbs sampler (Section~\ref{sec:The-particle-Gibbs})
with the aim of showing that the PGibbs has performance inferior to
that of the Gibbs sampler, but arbitrarily close if we increase $N$.

\section{Establishing the uniform minorization condition\label{sec:Minorization-and-Dirichlet}}

Before proceeding we turn to the i-SIR which is particularly simple
to analyze. The reason for detailing the short analysis of this simple
scenario is to provide the reader with an overview of the developments
which are to follow -- the remainder of the paper essentially replicates
the key steps of the argument below, albeit in the more complex SMC
framework. Notice that in this scenario $\mathsf{X}=\mathsf{Z}$ since
$T=1$. We let $G(x):=\pi({\rm d}x)/M({\rm d}x)$ for any $x\in\mathsf{X}$
and assume that $\bar{G}:=\sup_{x\in\mathsf{X}}G(x)<\infty$. Then
for $(x,S)\in\mathsf{X}\times\mathcal{B}\bigl(\mathsf{X}\bigr)$ we
can rewrite \allowdisplaybreaks[4] 
\begin{eqnarray*}
P_{N}(x,S) & = & \sum_{k=1}^{N}\int_{\mathsf{X}^{N}}{\textstyle {\displaystyle \frac{\pi({\rm d}z^{k})/M({\rm d}z^{k})}{\sum_{j=1}^{N}G(z^{j})}}}\mathbb{I}\bigl\{ z^{k}\in S\bigr\}\left({\textstyle \delta_{x}({\rm d}z^{1})\prod_{i=2}^{N}}M(\mathrm{d}z^{i})\right)\\
 & = & \int_{\mathsf{X}^{N}}\frac{1}{\sum_{j=1}^{N}G(z^{j})}\frac{\pi({\rm d}z^{1})}{M({\rm d}z^{1})}\mathbb{I}\bigl\{ z^{1}\in S\bigr\}\left({\textstyle \delta_{x}({\rm d}z^{1})\prod_{i=2}^{N}}M(\mathrm{d}z^{i})\right)\\
 &  & +\sum_{k=2}^{N}\int_{\mathsf{X}^{N}}\frac{1}{\sum_{j=1}^{N}G(z^{j})}\mathbb{I}\bigl\{ z^{k}\in S\bigr\}\pi({\rm d}z^{k})\left({\textstyle \delta_{x}({\rm d}z^{1})\prod_{i=2,i\neq k}^{N}}M(\mathrm{d}z^{i})\right)\\
 & = & \sum_{k=1}^{N}\int_{\mathsf{X}}\mathbb{E}_{1,x,k,y}\left[\frac{\mathbb{I}\bigl\{ y\in S\bigr\}}{\sum_{j=1}^{N}G(Z^{j})}\right]\left(\mathbb{I}\{k=1\}\frac{\pi({\rm d}x)}{M({\rm d}x)}\delta_{x}({\rm d}y)+\mathbb{I}\{k\neq1\}\pi({\rm d}y)\right),
\end{eqnarray*}
where $\mathbb{E}_{1,x,k,y}\bigl(\cdot\bigr)$ defines an expectation
for the random variables $Z^{1},\ldots,Z^{N}$ associated to the probability
distribution
\[
{\textstyle \delta_{x}({\rm d}z^{1})\prod_{i=2}^{N}}M(\mathrm{d}z^{i})
\]
for $k=1$ and $x=y\in\mathsf{X}$, and
\[
{\textstyle \delta_{x,y}({\rm d}z^{1}\times{\rm d}z^{k})\prod_{i=2,i\neq k}^{N}}M(\mathrm{d}z^{i})
\]
for $k\neq1$ and $x,y\in\mathsf{X}$. This auxiliary process turns
out to be central to our analysis, and will be generalised to the
general scenario and called ``doubly'' cSMC (c$^{2}$SMC). Indeed,
omitting the term $k=1$ in the representation of $P_{N}$ and by
application of Jensen's inequality to the convex mapping $x\mapsto(x+a)^{-1}$
for $x,a\in\mathbb{R}_{+}$ we obtain
\begin{eqnarray*}
P_{N}(x,S) & \geq & \sum_{k=2}^{N}\int_{\mathsf{X}}\mathbb{I}\bigl\{ y\in S\bigr\}\mathbb{E}_{1,x,k,y}\left[\frac{1}{G(x)+G(y)+\sum_{j=2,j\neq k}^{N}G(Z^{j})}\right]\pi({\rm d}y)\\
 & \geq & \sum_{k=2}^{N}\int_{\mathsf{X}}\frac{\mathbb{I}\bigl\{ y\in S\bigr\}}{G(x)+G(y)+N-2}\pi({\rm d}y)\\
 & \geq & \frac{N-1}{2\bar{G}+N-2}\pi(S)\quad.
\end{eqnarray*}
This is a uniform minorization condition which immediately implies
uniform geometric convergence (see the outline of our results in Section
\ref{sec:Introduction}), but in the present situation the result
is even stronger in that, in particular, it provides us with quantitative
bounds on the dependence of the performance of the algorithm on $N$.
Indeed it is a standard result that the minorization constant
\[
\epsilon_{N}=\frac{N-1}{2\bar{G}+N-2}=1-\frac{2\bar{G}-1}{2\bar{G}+N-2}\quad,
\]
provides the upper bound $1-\epsilon_{N}$ on the (geometric) rate
of convergence of the algorithm, which here vanishes at an asymptotic
rate $N^{-1}$ as $N$ increases. As we shall see the fact that the
minorization measure is the invariant distribution leads to a direct
lower bound on associated Dirichlet forms associated to $P_{N}$ which
in turn provide quantitative bounds on the spectral gap and the associated
asymptotic variance.  In the remainder of the section we generalize
the representation of $P_{N}$ in terms of the c$^{2}$SMC algorithm
and ``the estimator of the normalizing constant'' which suggests
applying Jensen's inequality as above. This requires us to consider
estimates of the resulting expectation in Section~\ref{sec:Estimates-of-the}. 

In order to proceed further it is required to define the c$^{2}$SMC
process, which is essentially similar to the cSMC process but where
conditioning is now upon two trajectories $x,y\in\mathsf{X}$. The
definition is therefore similar, but for reasons which will become
clearer below the second fixed trajectory is set to have a lineage
of the general form $\mathbf{k}:=k_{1:T}\in[N]^{T}$. We will use
below the convention that $\delta_{a,b}\bigl({\rm d}z^{1}\times{\rm d}z^{k}\bigr)$
reduces to $\delta_{a}({\rm d}z^{1})$ whenever $k=1$. The definition
of this process is similar to that of the cSMC algorithm and the distributions
involved are defined for $x,y\in\mathsf{X}$ and $\mathbf{k}\in[N]^{T}$
as follows 
\[
\mathbb{P}_{\mathbf{1},x,\mathbf{k},y}^{N}\left(Z_{1}\in{\rm d}z_{1}\right)=\delta_{x_{1},y_{1}}\bigl({\rm d}z_{1}^{1}\times{\rm d}z_{1}^{k_{1}}\bigr)\prod_{i=2,i\neq k_{1}}^{N}M_{1}({\rm d}z_{1}^{i})\quad,
\]
and for $t=2,\ldots,T-1$ (with the convention $a_{t-1}^{k,l}:=(a_{t-1}^{k},a_{t-1}^{l})$)
\begin{multline*}
\mathbb{P}_{\mathbf{1},x,\mathbf{k},y}^{N}\left(Z_{t}\in{\rm d}z_{t},A_{t-1}=a_{t-1}\left|Z_{t-1}=z_{t-1}\right.\right)=\delta_{x_{t},y_{t}}\bigl({\rm d}z_{t}^{1}\times{\rm d}z_{t}^{k_{t}}\bigr)\\
\times\mathbb{I}\{a_{t-1}^{1,k_{t}}=(1,k_{t-1})\}\prod_{i=2,i\neq k_{t}}^{N}\Bigg(\sum_{l=1}^{N}\frac{G_{t-1}(z_{t-1}^{l})}{\sum_{j=1}^{N}G_{t-1}(z_{t-1}^{j})}\mathbb{I}\left\{ a_{t-1}^{i}=l\right\} M_{t}(z_{t-1}^{l},{\rm d}z_{t}^{i})\Bigg)
\end{multline*}
and 
\[
\mathbb{P}_{\mathbf{1},x,\mathbf{k},y}^{N}\left(A_{T}=l\left|Z_{T}=z_{T}\right.\right)={\displaystyle {\textstyle \frac{{\displaystyle G_{T}(z_{T}^{l})}}{{\displaystyle {\textstyle \sum_{j=1}^{N}}G_{T}(z_{T}^{j})}}\quad.}}
\]
We note that although the transitions and the initial distributions
are, by the convention, well defined for $k_{t}=1$ and $x_{t}\neq y_{t}$
the distribution above will never be used in such a context. Just
as $\mathbb{P}_{\mathbf{1},x}^{N}$ is not a conditional distribution
of $\mathbb{P}^{N}\bigl(\cdot\bigr)$, the law of the SMC algorithm,
the same holds between $\mathbb{P}_{\mathbf{1},x,\mathbf{k},y}^{N}\bigl(\cdot\bigr)$
and $\mathbb{P}_{\mathbf{1},x}^{N}\bigl(\cdot\bigr)$. However we
now provide an important property relating these two probability distributions,
which together with (\ref{eq:PN_defn}) will allow us to decompose
this transition into key quantities and establish the sought minorization
condition. The proof of the following Lemma is in~\citep[Appendix~\ref{sec:Proof-of-Lemma_simplecorrespondence}]{andrieu2015uniformsupplement}.
\begin{lem}
\label{lem:simple_correspondence}For $\mathbf{i}\in\{2,\ldots,N\}^{T}$
and $x\in\mathsf{X}$,
\[
\mathbb{E}_{\mathbf{1},x}^{N}\left[I_{\mathbf{i}}\bigl(Z_{1:T},A_{1:T},S\bigr)\right]=\frac{\gamma_{T}}{N^{T}}\int_{\mathsf{X}}\pi({\rm d}y)\times\mathbb{I}\{y\in S\}\times\mathbb{E}_{\mathbf{1},x,\mathbf{i},y}^{N}\left[\frac{1}{{\displaystyle {\textstyle \prod_{t=1}^{T}\frac{1}{N}\sum_{j=1}^{N}}G_{t}(Z_{t}^{j})}}\right]\quad.
\]

\end{lem}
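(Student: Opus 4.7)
The plan is a direct unrolling of the cSMC joint law followed by recognising the remaining measure as the c$^2$SMC. Because $\mathbf{i}\in\{2,\ldots,N\}^{T}$, the lineage $\mathbf{i}$ is index-wise disjoint from $\mathbf{1}$, so it can be treated as a second ``frozen'' lineage without collision.

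First, I would expand $\mathbb{E}_{\mathbf{1},x}^{N}\bigl[I_{\mathbf{i}}\bigr]$ via the explicit joint density of $(Z_{1:T},A_{1:T})$ given by~(\ref{eq:def_P_=00007B1,x=00007D-time-1})--(\ref{eq:def_P_=00007B1,x=00007D-other-times}) together with the selection law of $A_{T}$. The indicator $I_{\mathbf{i}}$ forces $A_{T}=i_{T}$ and $A_{t-1}^{i_{t}}=i_{t-1}$ for $t=2,\ldots,T$. Summing out these (and only these) ancestor entries against their independent multinomial laws extracts the factor
\[
w_{T}^{i_{T}}\prod_{t=2}^{T}w_{t-1}^{i_{t-1}}M_{t}\bigl(z_{t-1}^{i_{t-1}},\mathrm{d}z_{t}^{i_{t}}\bigr),\qquad w_{t}^{k}:=\frac{G_{t}(z_{t}^{k})}{\sum_{j}G_{t}(z_{t}^{j})},
\]
while the remaining ancestor entries $a_{t-1}^{i}$ with $i\notin\{1,i_{t}\}$ are free and, upon summation, produce precisely the propagation mixtures $\sum_{k}w_{t-1}^{k}M_{t}(z_{t-1}^{k},\mathrm{d}z_{t}^{i})$ that appear in the definition of $\mathbb{P}_{\mathbf{1},x,\mathbf{i},y}^{N}$.

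Second, I would perform the change of notation $y_{t}:=z_{t}^{i_{t}}$. The initial $M_{1}(\mathrm{d}z_{1}^{i_{1}})$ (from the free initial draw at index $i_{1}\geq 2$) together with the extracted transitions $M_{t}(z_{t-1}^{i_{t-1}},\mathrm{d}z_{t}^{i_{t}})$ become exactly $M_{1}(\mathrm{d}y_{1})\prod_{t=2}^{T}M_{t}(y_{t-1},\mathrm{d}y_{t})$. Collecting the weights gives
\[
\prod_{t=1}^{T}\frac{G_{t}(y_{t})}{\sum_{j=1}^{N}G_{t}(z_{t}^{j})}=\frac{1}{N^{T}}\cdot\frac{\prod_{t=1}^{T}G_{t}(y_{t})}{\prod_{t=1}^{T}\tfrac{1}{N}\sum_{j=1}^{N}G_{t}(z_{t}^{j})},
\]
and the definition~(\ref{eq:defofpiforSMCframework}) of $\pi$ identifies $\prod_{t}G_{t}(y_{t})\,M_{1}(\mathrm{d}y_{1})\prod_{t=2}^{T}M_{t}(y_{t-1},\mathrm{d}y_{t})=\gamma_{T}\pi(\mathrm{d}y)$, producing the prefactor $\gamma_{T}/N^{T}$ and the measure $\pi(\mathrm{d}y)$ in the claim.

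Third, the leftover integration over the particles with indices outside $\{1,i_{t}\}$, combined with the Dirac at the frozen lineage $\mathbf{1}$ (given) and the Dirac $\delta_{y_{t}}(\mathrm{d}z_{t}^{i_{t}})$ obtained from the substitution, is by inspection exactly the law $\mathbb{P}_{\mathbf{1},x,\mathbf{i},y}^{N}$ after summing out its free ancestor variables. Consequently the surviving factor $1/\prod_{t}\tfrac{1}{N}\sum_{j}G_{t}(z_{t}^{j})$ is integrated against $\mathbb{P}_{\mathbf{1},x,\mathbf{i},y}^{N}$, yielding the stated expectation. The main obstacle is purely notational: verifying that the product of multinomial weights produced by the forced ancestors, combined with the delta replacement at lineage $\mathbf{i}$, reconstructs exactly the c$^{2}$SMC joint law, and in particular that no surplus weight or missing transition arises from the interaction between the two frozen lineages $\mathbf{1}$ and $\mathbf{i}$.
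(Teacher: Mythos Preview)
Your proposal is correct and follows essentially the same route as the paper: expand the cSMC joint law, use the indicator $I_{\mathbf{i}}$ to fix the ancestor entries along lineage $\mathbf{i}$, substitute $y_{t}=z_{t}^{i_{t}}$, and recognise the residual law on the remaining indices as the c$^{2}$SMC law $\mathbb{P}_{\mathbf{1},x,\mathbf{i},y}^{N}$. The paper packages the same computation via an intermediate lemma (Lemma~\ref{lem:correspondenceP_1andP_2}) that first records the one-step correspondence between the cSMC and c$^{2}$SMC transitions and then chains these together, proving a slightly more general statement for all $\mathbf{i}\in\mathcal{J}_{T}$ (allowing coalescence with $\mathbf{1}$ at some time $m_{\mathbf{i}}$); your direct argument is the special case $m_{\mathbf{i}}=0$.
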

As we shall see the concentration properties of the ``estimator of
the normalizing constant'' plays a central role for any $z_{1:T}\in\bigl(\mathsf{Z}^{N}\bigr)^{T}$
\[
\hat{\gamma}_{T}^{N}\bigl(z_{1:T}\bigr):=\prod_{t=1}^{T}\frac{1}{N}\sum_{j=1}^{N}G_{t}(z_{t}^{j})\quad.
\]
We first obtain a uniform minorization condition for the cSMC transition
probability. This simple result establishes the expectation of $\hat{\gamma}_{T}^{N}\bigl(Z_{1:T}\bigr)$
with respect to a c$^{2}$SMC algorithm as a key quantity of interest,
and motivates the non-asymptotic analysis and bounds of Section~\ref{sec:Estimates-of-the}.
\begin{prop}
\label{prop:uniformminorizationcrude}For any $(x,S)\in\mathsf{X}\times\mathcal{B}\bigl(\mathsf{X}\bigr)$
and $N\geq2$ we have
\[
P_{N}(x,S)\geq\int_{S}\frac{\gamma_{T}\times(1-1/N)^{T}}{\mathbb{E}_{\mathbf{1},x,\mathbf{2},y}^{N}\left[\hat{\gamma}_{T}^{N}\bigl(Z_{1:T}\bigr)\right]}\pi({\rm d}y)\quad.
\]
\end{prop}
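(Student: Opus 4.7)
The plan is to start from the representation (\ref{eq:PN_defn}), namely $P_{N}(x,S)=\mathbb{E}_{\mathbf{1},x}^{N}[\sum_{\mathbf{i}\in[N]^{T}}I_{\mathbf{i}}(Z_{1:T},A_{1:T},S)]$, and simply discard those multi-indices $\mathbf{i}\in[N]^{T}$ having at least one coordinate equal to $1$. Since each $I_{\mathbf{i}}$ is non-negative, this yields the lower bound
\[
P_{N}(x,S)\;\geq\;\sum_{\mathbf{i}\in\{2,\ldots,N\}^{T}}\mathbb{E}_{\mathbf{1},x}^{N}\bigl[I_{\mathbf{i}}(Z_{1:T},A_{1:T},S)\bigr],
\]
and each of the $(N-1)^{T}$ surviving summands is precisely in the range of applicability of Lemma~\ref{lem:simple_correspondence}, which rewrites it as $(\gamma_{T}/N^{T})\int_{S}\pi({\rm d}y)\,\mathbb{E}_{\mathbf{1},x,\mathbf{i},y}^{N}[1/\hat{\gamma}_{T}^{N}(Z_{1:T})]$. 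This parallels exactly the simpler $T=1$ calculation carried out earlier in the section, where only the $k\in\{2,\ldots,N\}$ terms were retained.

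The next step is to apply Jensen's inequality to the convex map $u\mapsto 1/u$ on $(0,\infty)$, giving
\[
\mathbb{E}_{\mathbf{1},x,\mathbf{i},y}^{N}\!\left[\frac{1}{\hat{\gamma}_{T}^{N}(Z_{1:T})}\right]\;\geq\;\frac{1}{\mathbb{E}_{\mathbf{1},x,\mathbf{i},y}^{N}[\hat{\gamma}_{T}^{N}(Z_{1:T})]}.
\]
The third and final ingredient is a permutation-symmetry observation: for every $\mathbf{i}\in\{2,\ldots,N\}^{T}$, the denominator on the right-hand side equals $\mathbb{E}_{\mathbf{1},x,\mathbf{2},y}^{N}[\hat{\gamma}_{T}^{N}(Z_{1:T})]$. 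Indeed, $\hat{\gamma}_{T}^{N}(z_{1:T})=\prod_{t}N^{-1}\sum_{j}G_{t}(z_{t}^{j})$ is symmetric in the particle labels $\{1,\ldots,N\}$ at each time $t$, while the two c$^{2}$SMC laws differ only in that the second pinned trajectory sits at coordinate $k_{t}$ instead of $2$ at each time; composing with the time-wise transposition $k_{t}\leftrightarrow 2$ (which fixes coordinate $1$, as $k_{t}\neq 1$) relabels the joint law of the free particles and ancestor indices in a way that leaves $\hat{\gamma}_{T}^{N}$ invariant.

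Combining the three steps and using $|\{2,\ldots,N\}^{T}|=(N-1)^{T}$ one obtains
\[
P_{N}(x,S)\;\geq\;\frac{(N-1)^{T}\gamma_{T}}{N^{T}}\int_{S}\frac{\pi({\rm d}y)}{\mathbb{E}_{\mathbf{1},x,\mathbf{2},y}^{N}\bigl[\hat{\gamma}_{T}^{N}(Z_{1:T})\bigr]},
\]
which is the claimed bound after identifying $(N-1)^{T}/N^{T}=(1-1/N)^{T}$. The only step that is not immediately mechanical is the symmetry reduction to the single representative index $\mathbf{2}$: one has to verify that the transpositions described above really push $\mathbb{P}_{\mathbf{1},x,\mathbf{i},y}^{N}$ onto $\mathbb{P}_{\mathbf{1},x,\mathbf{2},y}^{N}$, which in turn reduces to noting that the ancestor variables $A_{t-1}$ transform covariantly under particle relabelings and that $\hat{\gamma}_{T}^{N}$ depends on $Z_{1:T}$ alone. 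Everything else (positivity, Lemma~\ref{lem:simple_correspondence}, Jensen, and the counting of $(N-1)^{T}$ terms) is routine.
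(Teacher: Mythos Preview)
Your proof is correct and follows essentially the same route as the paper's: drop the indices touching $1$, invoke Lemma~\ref{lem:simple_correspondence}, use the permutation symmetry of the c$^{2}$SMC law to reduce every $\mathbf{i}\in\{2,\dots,N\}^{T}$ to the representative $\mathbf{2}$, and apply Jensen to $u\mapsto 1/u$. The only cosmetic difference is that the paper applies the symmetry reduction \emph{before} Jensen (to $\mathbb{E}_{\mathbf{1},x,\mathbf{i},y}^{N}[1/\hat{\gamma}_{T}^{N}]$) while you apply it \emph{after} (to $\mathbb{E}_{\mathbf{1},x,\mathbf{i},y}^{N}[\hat{\gamma}_{T}^{N}]$); since the symmetry is at the level of the laws and $\hat{\gamma}_{T}^{N}$ is label-symmetric, the two orderings are equivalent, and your explicit description of the time-wise transpositions $i_{t}\leftrightarrow 2$ is in fact more detailed than the paper's one-line appeal to ``invariance by permutation''.
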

\begin{proof}
Using (\ref{eq:PN_defn}), we only keep the trajectories for which
there is no coalescence with the first trajectory, i.e., we exclude
terms such that $i_{t}=1$ for some $t\in[T]$ and obtain
\[
P_{N}(x,S)\geq\sum_{\mathbf{i}\in\{2,\ldots,N\}^{T}}\mathbb{E}_{\mathbf{1},x}^{N}\left[I_{\mathbf{i}}\bigl(Z_{1:T},A_{1:T},S\bigr)\right]\quad.
\]
Consequently, using Lemma~\ref{lem:simple_correspondence},
\begin{eqnarray*}
P_{N}(x,S) & \geq & \sum_{i_{1:T}\in[2:N]^{T}}\frac{\gamma_{T}}{N^{T}}\int_{\mathsf{X}}\pi({\rm d}y)\times\mathbb{I}\{y\in S\}\times\mathbb{E}_{\mathbf{1},x,\mathbf{i},y}^{N}\left[\frac{1}{{\displaystyle {\textstyle \prod_{t=1}^{T}\frac{1}{N}\sum_{j=1}^{N}}G_{t}(Z_{t}^{j})}}\right]\\
 & = & \frac{\gamma_{T}(N-1)^{T}}{N^{T}}\int_{S}\mathbb{E}_{\mathbf{1},x,\mathbf{2},y}^{N}\left[\frac{1}{{\displaystyle {\textstyle \prod_{t=1}^{T}\frac{1}{N}\sum_{j=1}^{N}}G_{t}(Z_{t}^{j})}}\right]\pi({\rm d}y_{1:T})\quad,
\end{eqnarray*}
using invariance by permutation of $i_{1},\ldots,i_{T}$ of the expectations.
We conclude by application of Jensen's inequality for the convex function
$u\mapsto1/u$ for $u\in\mathbb{R}_{+}$. \end{proof}
\begin{cor}
\label{cor:uniformminorbymu}Let $N\geq2$ and assume that 
\[
\epsilon_{N}:=\frac{\gamma_{T}\times(1-1/N)^{T}}{\sup_{x,y\in\mathsf{X}}\mathbb{E}_{\mathbf{1},x,\mathbf{2},y}^{N}\left[\hat{\gamma}_{T}^{N}\bigl(Z_{1:T}\bigr)\right]}>0\quad,
\]
then for any $(x,S)\in\mathsf{X}\times\mathcal{B}\bigl(\mathsf{X}\bigr)$,
$P_{N}(x,S)\geq\epsilon_{N}\pi(S)$ and from Proposition~\ref{prop:uniformminorizationcrude}
all the properties of~\citep[Proposition~\ref{prop:boundsspectralgapandvarianceforP_N}]{andrieu2015uniformsupplement}
apply to the i-cSMC with $\varepsilon=\epsilon_{N}$.
\end{cor}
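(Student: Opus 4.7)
The plan is to obtain the stated minorization as an immediate consequence of Proposition~\ref{prop:uniformminorizationcrude}, and then cite the generic Markov chain result from the supplement to convert this minorization into the listed operator-theoretic properties. More precisely, Proposition~\ref{prop:uniformminorizationcrude} already gives, for every $(x,S)\in\mathsf{X}\times\mathcal{B}(\mathsf{X})$,
\[
P_{N}(x,S)\ \geq\ \int_{S}\frac{\gamma_{T}\,(1-1/N)^{T}}{\mathbb{E}_{\mathbf{1},x,\mathbf{2},y}^{N}\!\bigl[\hat{\gamma}_{T}^{N}(Z_{1:T})\bigr]}\,\pi(\mathrm{d}y),
\]
so the only work is to replace the $y$-dependent denominator by something independent of $y$ (and $x$).

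I would first observe that bounding the denominator from above by $\sup_{x,y\in\mathsf{X}}\mathbb{E}_{\mathbf{1},x,\mathbf{2},y}^{N}[\hat{\gamma}_{T}^{N}(Z_{1:T})]$ makes the integrand a constant that can be taken outside the integral, yielding directly
\[
P_{N}(x,S)\ \geq\ \frac{\gamma_{T}\,(1-1/N)^{T}}{\sup_{x,y\in\mathsf{X}}\mathbb{E}_{\mathbf{1},x,\mathbf{2},y}^{N}\!\bigl[\hat{\gamma}_{T}^{N}(Z_{1:T})\bigr]}\,\pi(S)\ =\ \epsilon_{N}\,\pi(S).
\]
The positivity hypothesis $\epsilon_{N}>0$ is exactly what is required for this bound to be non-trivial, so the minorization part of the statement follows with no further effort.

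For the second half of the claim, since $P_{N}$ is $\pi$-reversible (Theorem~\ref{thm:THEtheorem}(a), via Lemma~\ref{lem:selfajoint_positive}) and satisfies the uniform minorization $P_{N}(x,\cdot)\geq \epsilon_{N}\pi(\cdot)$ with minorizing measure equal to the invariant distribution, the hypotheses of the generic Proposition~\ref{prop:boundsspectralgapandvarianceforP_N} of the supplement are met with $\varepsilon=\epsilon_{N}$, and its conclusions transfer verbatim to $P_{N}$. There is essentially no obstacle in the proof itself: all of the substantive analytic work is either already done in Proposition~\ref{prop:uniformminorizationcrude} (the Jensen-based lower bound on $P_{N}$), or encapsulated in the cited supplement proposition (the standard passage from a minorization by $\pi$ to spectral gap, $L^{2}$-geometric convergence, TV convergence, and asymptotic variance bounds). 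The only point worth flagging is that measurability of the integrand in $y$ and finiteness of the supremum over $(x,y)$ are implicit in the assumption $\epsilon_{N}>0$, so no additional regularity argument is needed.
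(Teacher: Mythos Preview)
Your argument is correct and matches the paper's intent exactly: the corollary is stated without proof precisely because it follows immediately from Proposition~\ref{prop:uniformminorizationcrude} by replacing the $(x,y)$-dependent denominator with its supremum, and then invoking reversibility and the generic minorization proposition from the supplement. There is nothing to add.
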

The next section is dedicated to finding a useful expression for the
expectation $\mathbb{E}_{\mathbf{1},x,\mathbf{2},y}^{N}\left[\hat{\gamma}_{T}^{N}\bigl(Z_{1:T}\bigr)\right]$
and establishing explicit bounds on this quantity, and therefore $\epsilon_{N}$
in Corollary~\ref{cor:uniformminorbymu}, under additional assumptions.

Before proceeding to novel analysis, for completeness we gather two
known properties of the i-cSMC (in the general set-up) in the following
lemma which will be exploited throughout the remainder of the paper.
Both results are immediate upon noticing that the i-cSMC is a two
stage Gibbs sampler on an artificial joint distribution (see (\ref{eq:definitionartificialdistribution})
in~\citep[Appendix~\ref{sec:Proof-of-Lemma_selfadjoint}]{andrieu2015uniformsupplement},
which is a generalization of (\ref{eq:artificialforiSIR})). The
results have also been shown in detail in~\citep{chopin:singh:2013}.
A proof is included in~\citep[Appendix~\ref{sec:Proof-of-Lemma_selfadjoint}]{andrieu2015uniformsupplement}
for completeness.
\begin{lem}
\label{lem:selfajoint_positive}$P_{N}$, viewed as an operator on
$L^{2}\bigl(\mathsf{X},\pi\bigr)$, is self-adjoint and positive.
\end{lem}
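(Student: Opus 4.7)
The plan is to identify $P_{N}$ as the marginal of a two-stage deterministic-scan Gibbs sampler on an extended distribution $\tilde{\pi}$ generalising~(\ref{eq:artificialforiSIR}), whereupon both self-adjointness and positivity follow from standard Hilbert-space facts about such kernels.

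First I would define $\tilde{\pi}$ on $[N]^{T}\times\mathsf{W}$ as the natural analogue of~(\ref{eq:artificialforiSIR}): under $\tilde{\pi}$, a lineage $\mathbf{i}\in[N]^{T}$ is drawn uniformly, the selected trajectory $z_{1:T}^{\mathbf{i}}$ is drawn from $\pi$ (cf.~(\ref{eq:defofpiforSMCframework})), and all remaining particles, ancestor indices, and the terminal index are then drawn from the symmetrised version of $\mathbb{P}_{\mathbf{1},x}^{N}$ in~(\ref{eq:def_P_=00007B1,x=00007D-time-1})--(\ref{eq:def_P_=00007B1,x=00007D-other-times}) in which the fixed lineage is $\mathbf{i}$ rather than $\mathbf{1}$. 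The unbiasedness computation behind Lemma~\ref{lem:simple_correspondence} then yields two equivalent factorisations of $\tilde{\pi}$: (i) the factorisation used in the definition, whose conditional given $(\mathbf{i},z_{1:T}^{\mathbf{i}})=(\mathbf{i},x)$ is $\mathbb{P}_{\mathbf{i},x}^{N}$; and (ii) a dual factorisation in which the conditional of $\mathbf{i}$ given all particles and ancestor variables is the categorical law with weights proportional to the $I_{\mathbf{i}}$ in~(\ref{eq:def_of_I_i}), the selected trajectory being then determined by $\mathbf{i}$.

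Second I would match the expression~(\ref{eq:PN_defn}) with the deterministic-scan two-stage Gibbs sampler on $\tilde{\pi}$ alternating these two conditionals: from $x\in\mathsf{X}$, one fixes (by the lineage symmetry of $\tilde{\pi}$) the current lineage to be $\mathbf{1}$, draws the auxiliary variables from~(i) (which is $\mathbb{P}_{\mathbf{1},x}^{N}$), then redraws the lineage $\mathbf{i}$ from~(ii) and returns $Z_{1:T}^{\mathbf{i}}$. In operator form on $L^{2}(\mathsf{X},\pi)$, writing $R f$ for the conditional expectation of $f(Z_{1:T}^{\mathbf{I}})$ given the auxiliary block---an element of $L^{2}(\tilde{\pi}_{\mathrm{aux}})$---the Gibbs identification yields the factorisation $P_{N}=R^{*}R$. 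Therefore $P_{N}^{*}=P_{N}$ and
\begin{equation*}
\langle f,P_{N}f\rangle_{\pi}=\|Rf\|_{L^{2}(\tilde{\pi}_{\mathrm{aux}})}^{2}\geq 0,
\end{equation*}
which delivers self-adjointness and positivity at once.

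The main obstacle is the bookkeeping of the first step: one must verify that the two factorisations of $\tilde{\pi}$ really coincide, a statement essentially equivalent to the unbiasedness of the standard SMC normalising-constant estimator and requiring careful tracking of the weights $G_{t}$ that appear both in the resampling mixtures and in the terminal selection. Once this identity is in place, the two-stage Gibbs identification and the factorisation $P_{N}=R^{*}R$ are formal consequences, and both conclusions of the lemma follow immediately.
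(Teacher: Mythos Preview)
Your proposal is correct and follows essentially the same route as the paper: define the extended target $\tilde{\pi}$ on lineages and particle systems, identify $P_{N}$ with the marginal of the two-stage Gibbs sampler on $\tilde{\pi}$ (using the exchangeability of lineages under multinomial resampling to fix the conditioning lineage at $\mathbf{1}$), and read off self-adjointness and positivity from the resulting structure. The only cosmetic difference is that the paper writes out $\langle \tilde{P}_{N}f,g\rangle_{\pi}$ and $\langle \tilde{P}_{N}f,f\rangle_{\pi}$ explicitly rather than invoking the $R^{*}R$ factorisation, and it records the conditional $\tilde{\pi}_{z_{1:T},a_{1:T-1}}(\mathbf{k})$ directly instead of appealing to the unbiasedness identity; the content is the same.
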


\section{Quantitative bounds for the doubly conditional i-cSMC expectation\label{sec:Estimates-of-the}}

In this section we first find an exact expression for $\mathbb{E}_{\mathbf{1},x,\mathbf{2},y}^{N}\left[\hat{\gamma}_{T}^{N}\bigl(Z_{1:T}\bigr)\right]$
in terms of quantities underpinning the definition of $\pi$ given
in Section~\ref{sec:The-i-CSMC} and then move on to provide various
estimates of the conditional expectation involved in the minorization
established in Proposition~\ref{prop:uniformminorizationcrude},
under various assumptions on the aforementioned quantities. Throughout
we use the usual convention that $\sum_{\emptyset}=0$ and $\prod_{\emptyset}=1$.
We let $G_{p,q}(z):=Q_{p,q}(1)(z)$ and $G_{p,q}^{1+2}:=G_{p,q}\bigl(x_{p}\bigr)+G_{p,q}\bigl(y_{p}\bigr)$.
We note that $G_{p,p+1}(z)=G_{p}(z)$ for $p\in[T]$ and we use the
convention throughout that for any $z\in\mathsf{Z}$, $G_{0}(z)=1$
and $Q_{0,p}\bigl(f_{p}\bigr)(z):=M_{1}\left(Q_{1,p}(f_{q})\right)$.
We write $G_{0,p}:=M_{1}\left(Q_{1,p}(1)\right)$ since $G_{0,p}(z)$
is independent of $z$. Our first result, whose proof can be found
in~\citep[Appendix~\ref{sec:Supplementary-quantitative}]{andrieu2015uniformsupplement},
is
\begin{prop}
\label{prop:doubleconditionalSMC}Let $x,y\in\mathsf{X}$ and $N\geq2$.
Then, 
\[
\mathbb{E}_{\mathbf{1},x,\mathbf{2},y}^{N}\left[\hat{\gamma}_{T}^{N}\bigl(Z_{1:T}\bigr)\right]=\frac{1}{N^{T}}\sum_{s=1}^{T+1}(N-2)^{T+1-s}\sum_{\mathbf{i}\in\mathcal{I}_{T+1,s}}G_{0,i_{\text{1}}}C_{T,s}\bigl(\mathbf{i},x,y\bigr)\quad,
\]
where for any $s=1,\ldots,k$, 
\[
\mathcal{I}_{k,s}:=\left\{ i_{1},\ldots,i_{s}\in\mathbb{N}^{s}:T-k+1<i_{1}\cdots<i_{s}=T+1\right\} \quad,
\]
and for $\mathbf{i}\in\mathcal{I}_{k,s}$ 
\[
C_{k,s}\bigl(\mathbf{i},x,y\bigr):=\prod_{m=1}^{s-1}\bigl[G_{i_{m},i_{m+1}}\bigl(x_{i_{m}}\bigr)+G_{i_{m},i_{m+1}}\bigl(y_{i_{m}})\bigr]\quad.
\]
\end{prop}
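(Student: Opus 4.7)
My plan is to prove the identity by induction on $T$ via a conditional expectation argument that exploits the cancellation between the normalising constants in $\hat{\gamma}_{T}^{N}$ and the weights appearing in the multinomial resampling step. Writing $\hat{\gamma}_{T}^{N}=N^{-T}\prod_{t=1}^{T}W_{t}$ with $W_{t}:=G_{t}(x_{t})+G_{t}(y_{t})+\sum_{j=3}^{N}G_{t}(z_{t}^{j})$, the target quantity is $\Xi_{T}(x,y):=\mathbb{E}_{\mathbf{1},x,\mathbf{2},y}^{N}\bigl[\prod_{t=1}^{T}W_{t}\bigr]$, for which I will derive a pair of coupled recursions that match the claimed formula.

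The key computation is
\[
\mathbb{E}\bigl[W_{T}\bigm|\mathcal{F}_{T-1}\bigr]=G_{T}(x_{T})+G_{T}(y_{T})+\frac{N-2}{W_{T-1}}\sum_{l=1}^{N}G_{T-1,T+1}(z_{T-1}^{l}),
\]
which follows from multinomial resampling, exchangeability of the $N-2$ free particles, and the tower identity $G_{p-1}(z)\,M_{p}(G_{p,q})(z)=G_{p-1,q}(z)$ (a direct consequence of the definition of $Q_{p,q}$). Multiplying by $\prod_{t=1}^{T-1}W_{t}$ and taking expectation, the $W_{T-1}$ cancels, producing the recursion $\Xi_{T}=(G_{T}(x_{T})+G_{T}(y_{T}))\,\Xi_{T-1}+(N-2)\,A_{T-1}$, where $A_{p}(x,y):=\mathbb{E}\bigl[\prod_{t=1}^{p-1}W_{t}\cdot\sum_{l=1}^{N}G_{p,T+1}(z_{p}^{l})\bigr]$. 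Iterating the same cancellation/exchangeability trick inside $A_{p}$ yields the parallel recursion $A_{p}=(G_{p,T+1}(x_{p})+G_{p,T+1}(y_{p}))\,\Xi_{p-1}+(N-2)\,A_{p-1}$, with base values $\Xi_{0}=1$ and $A_{0}=G_{0,T+1}$ (the latter read off from $\mathbb{E}[\sum_{l}G_{1,T+1}(z_{1}^{l})]$ using that $z_{1}^{l}\sim M_{1}$ i.i.d.\ for $l\geq3$, together with $M_{1}(G_{1,T+1})=G_{0,T+1}$).

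Solving the second recursion in closed form and substituting back gives
\[
\Xi_{T}=(N-2)^{T}G_{0,T+1}+\sum_{q=1}^{T}(N-2)^{T-q}\bigl[G_{q,T+1}(x_{q})+G_{q,T+1}(y_{q})\bigr]\Xi_{q-1},
\]
which is precisely the recursion one obtains by splitting the right-hand side of the proposition according to the value $i_{s-1}=q$ of the last non-sink checkpoint (with $s=1$ producing the isolated $(N-2)^{T}G_{0,T+1}$ term and $s\geq 2$ giving a nested inner sum of the same form as $\Xi_{q-1}$). A routine induction on $T$ then closes the argument. I expect the main obstacle to be the bookkeeping of nested cancellations and combinatorial factors: one must check at each level that the $W_{p-1}^{-1}$ arising from resampling is matched by a $W_{p-1}$ brought out of $\prod_{t}W_{t}$, that the factor $(N-2)^{T+1-s}$ accumulates correctly across successive exchangeability steps, and that the tower identity for $Q$-kernels is applied so as to build the composite kernels $G_{p,T+1}$ with the correct endpoints.
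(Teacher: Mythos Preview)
Your argument is correct and rests on the same one-step identity the paper isolates as Lemma~\ref{lem:backwardinduction}\ref{enu:condexpect_a}: conditioning on $Z_{t-1}$ splits $\sum_j f_t(Z_t^j)$ into the two frozen-path contributions plus $(N-2)/W_{t-1}$ times $\sum_k Q_{t-1,t}(f_t)(z_{t-1}^k)$. The organisation, however, differs. The paper keeps the full conditional expectation $\mathbb{E}\bigl[\prod_{t=T-k+1}^{T} W_t\,\big|\,Z_{T-k}\bigr]$ and proves by backward induction on $k$ an explicit combinatorial formula for it, written as a deterministic piece $A_{T-k}$ plus a $z_{T-k}$-dependent piece $B_{T-k}$ (Lemma~\ref{lem:backwardinduction}\ref{enu:condexpect_b}), before a final integration over $z_{1}$. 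You instead pass to the unconditional expectation immediately, which collapses the $z$-dependence and leaves only the two scalar sequences $\Xi_{p}$ and $A_{p}$; the resulting linear recurrence unwinds to your displayed closed form, and the induction on the horizon $T$ then matches it to the claimed sum by peeling off the last internal index $i_{s-1}$. Your route avoids the heavier bookkeeping of the paper's $A_{T-k},B_{T-k}$ expressions at the modest price that the auxiliary $A_{p}$ depends on the terminal time through $G_{p,T+1}$, which is harmless for the induction.
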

\begin{rem}
While the expectation of interest here has been hitherto uninvestigated,
the form of Proposition~\ref{prop:doubleconditionalSMC} is reminiscent
of non-asymptotic results in~\citep{cerou-delmoral-guyader}, in
which second moments of $\hat{\gamma}_{T}^{N}\bigl(Z_{1:T}\bigr)$
are analyzed with respect to the law of a standard SMC algorithm.
\end{rem}
We now turn to estimates of the expectation above, starting with very
minimal assumptions which allow us to establish the minorization condition
required to apply~\citep[Proposition~\ref{prop:boundsspectralgapandvarianceforP_N}]{andrieu2015uniformsupplement}
and deduce most of our results, without the need for assumptions on
the dynamic of the system---the number of particles is however required
to grow exponentially in order to maintain a set level of performance.
We show subsequently that with stronger assumptions on $\{M_{t},G_{t}\}_{t=1}^{T}$
it is possible to show that $N$ should grow linearly with $T$ to
ensure that a set level of performance is maintained.
\begin{prop}
\label{prop:boundepsilonNwithuniformboundG}Assume that for all $t\in\{1,\ldots,T\}$,
$\bar{G}_{t}:=\sup_{z\in\mathsf{Z}}G_{t}(z)<\infty$, then for any
$N\geq2$
\[
\mathbb{E}_{\mathbf{1},x,\mathbf{2},y}^{N}\left[\hat{\gamma}_{T}^{N}\bigl(Z_{1:T}\bigr)\right]\leq\gamma_{T}\left\{ 1+\left[1-\left(1-\frac{2}{N}\right)^{T}\right]\left[\frac{\prod_{t=1}^{T}\bar{G}_{t}}{\gamma_{T}}-1\right]\right\} \quad.
\]
\end{prop}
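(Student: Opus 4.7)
The plan is to start from the exact formula given in Proposition~\ref{prop:doubleconditionalSMC} and bound the $x,y$-dependent terms using the uniform bound on the potentials. The key observation is that $G_{p,q}(z)=Q_{p,q}(1)(z)=\mathbb{E}\bigl[\prod_{k=p}^{q-1}G_k(Z_k)\mid Z_p=z\bigr]$, so under the assumption $\sup_z G_t(z)\leq \bar{G}_t$ we have the deterministic bound $G_{p,q}(z)\leq\prod_{k=p}^{q-1}\bar{G}_k$ for every $z\in\mathsf{Z}$, and similarly $G_{0,p}\leq\prod_{k=1}^{p-1}\bar{G}_k$. Applied to the second-factor sum inside $C_{T,s}$, this gives $G_{i_m,i_{m+1}}(x_{i_m})+G_{i_m,i_{m+1}}(y_{i_m})\leq 2\prod_{k=i_m}^{i_{m+1}-1}\bar{G}_k$.

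Next, I would exploit that for any $\mathbf{i}=(i_1<\cdots<i_s=T+1)\in\mathcal{I}_{T+1,s}$, the indices $\{1,\dots,i_1-1\},\{i_1,\dots,i_2-1\},\dots,\{i_{s-1},\dots,T\}$ form a partition of $\{1,\dots,T\}$. Consequently the bounds telescope into
\[
G_{0,i_1}\,C_{T,s}(\mathbf{i},x,y)\;\leq\;2^{s-1}\prod_{t=1}^{T}\bar{G}_t,
\]
uniformly over $\mathbf{i}\in\mathcal{I}_{T+1,s}$. Since $|\mathcal{I}_{T+1,s}|=\binom{T}{s-1}$ (the free choice is the subset $\{i_1,\dots,i_{s-1}\}\subseteq\{1,\dots,T\}$), summing over $\mathbf{i}$ in Proposition~\ref{prop:doubleconditionalSMC} gives the contribution $\binom{T}{s-1}2^{s-1}\prod_t\bar{G}_t$ for each $s\geq 2$.

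The main subtlety is that replacing every term by $\prod_t\bar{G}_t$ alone only yields the crude bound $\prod_t\bar{G}_t$ (via $\sum_{s=1}^{T+1}\binom{T}{s-1}2^{s-1}(N-2)^{T+1-s}=N^T$ by the binomial theorem). To obtain the sharper statement one must handle $s=1$ separately: there $\mathcal{I}_{T+1,1}=\{(T+1)\}$, $C_{T,1}=1$, and $G_{0,T+1}=M_1 Q_{1,T+1}(1)=\gamma_T$ \emph{exactly}, so this term contributes $(N-2)^T\gamma_T/N^T$ rather than $(N-2)^T\prod_t\bar{G}_t/N^T$. Splitting off $s=1$ and applying the $2^{s-1}\prod_t\bar{G}_t$ bound to $s\geq 2$ then gives
\[
\mathbb{E}_{\mathbf{1},x,\mathbf{2},y}^{N}\bigl[\hat{\gamma}_T^N(Z_{1:T})\bigr]\;\leq\;\frac{(N-2)^T}{N^T}\gamma_T+\frac{N^T-(N-2)^T}{N^T}\prod_{t=1}^{T}\bar{G}_t,
\]
where the coefficient of $\prod_t\bar{G}_t$ comes from the binomial identity above minus the $s=1$ term. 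Factoring out $\gamma_T$ and rewriting $(N-2)^T/N^T=(1-2/N)^T$ yields the claimed form.

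The only real obstacle is ensuring the telescoping step is clean: one needs $i_s=T+1$ so that the last factor $G_{i_{s-1},T+1}$ absorbs all remaining indices up to $T$, and one must remember the convention $G_{0,i_1}$ to cover the indices before $i_1$. Once this bookkeeping is in place the rest is the binomial identity and an algebraic rearrangement, both routine.
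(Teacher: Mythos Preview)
Your proposal is correct and follows essentially the same approach as the paper's proof: start from the exact expansion of Proposition~\ref{prop:doubleconditionalSMC}, isolate the $s=1$ term (which equals $\gamma_T(N-2)^T/N^T$ exactly since $G_{0,T+1}=\gamma_T$), bound each $s\geq 2$ term by $2^{s-1}\prod_t\bar G_t$ via the telescoping product, and collapse the remaining sum with the binomial identity $\sum_{s=1}^{T+1}\binom{T}{s-1}(N-2)^{T+1-s}2^{s-1}=N^T$. The paper's only cosmetic difference is that it writes the expansion with the factor $\tfrac{1}{2}G_{i_m,i_{m+1}}^{1+2}$ and the weights $\bigl(\tfrac{N-2}{N}\bigr)^{T+1-s}\tfrac{2^{s-1}}{N^{s-1}}$, but the bounds and the identification of $|\mathcal I_{T+1,s}|=\binom{T}{s-1}$ are identical.
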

\begin{proof}
The assumption on the potentials implies that for any $p,q\in\mathbb{N}$
with $p<q$ we have $G_{p,q}\leq\prod_{k=p}^{q-1}\bar{G}_{k}$, and
from Proposition~\ref{prop:doubleconditionalSMC} we have
\begin{eqnarray*}
\mathbb{E}_{\mathbf{1},x,\mathbf{2},y}^{N}\left[\hat{\gamma}_{T}^{N}\bigl(Z_{1:T}\bigr)\right] & = & \sum_{s=1}^{T+1}\left(\frac{N-2}{N}\right)^{T+1-s}\frac{2^{s-1}}{N{}^{s-1}}\sum_{\mathcal{I}_{T+1,s}}G_{0,i_{\text{1}}}\prod_{m=1}^{s-1}\frac{1}{2}G_{i_{m},i_{m+1}}^{1+2}\\
 & \leq & \gamma_{T}\left(\frac{N-2}{N}\right)^{T}+\prod_{k=1}^{T}\bar{G}_{k}\times\sum_{s=2}^{T+1}\binom{T}{s-1}\left(\frac{N-2}{N}\right)^{T+1-s}\frac{2^{s-1}}{N{}^{s-1}}\\
 & = & \gamma_{T}\left(\frac{N-2}{N}\right)^{T}+\left[1-\left(\frac{N-2}{N}\right)^{T}\right]\prod_{k=1}^{T}\bar{G}_{k}\quad,
\end{eqnarray*}
and the result follows.\end{proof}
\begin{cor}
\label{cor:convergence_epsilon}Propositions~\ref{prop:uniformminorizationcrude}
and~\ref{prop:boundepsilonNwithuniformboundG} together imply that
for any $x,S\in\mathsf{X}\times\mathcal{B}(\mathsf{X})$,
\[
P_{N}(x,S)\geq\epsilon_{N}\pi(S)\quad\text{with}\quad\epsilon_{N}=\frac{(1-1/N)^{T}}{1+\left[1-\left(1-\frac{2}{N}\right)^{T}\right]\left[\frac{\prod_{t=1}^{T}\bar{G}_{t}}{\gamma_{T}}-1\right]}\quad
\]
 and $\lim_{N\rightarrow\infty}\epsilon_{N}=1$. 
\end{cor}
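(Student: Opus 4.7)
The plan is to combine the two cited propositions essentially verbatim and then compute the limit, so this is mostly a bookkeeping exercise. First I would take the lower bound from Proposition~\ref{prop:uniformminorizationcrude},
\[
P_N(x,S) \geq \int_S \frac{\gamma_T(1-1/N)^T}{\mathbb{E}_{\mathbf{1},x,\mathbf{2},y}^N\bigl[\hat\gamma_T^N(Z_{1:T})\bigr]}\,\pi(\mathrm{d}y),
\]
and substitute the upper bound on the denominator provided by Proposition~\ref{prop:boundepsilonNwithuniformboundG}. Since that upper bound is uniform in $x,y$ (it depends only on $N$, $T$, the $\bar G_t$'s and $\gamma_T$), the integrand becomes the constant $\epsilon_N$ as displayed in the statement, and the inequality reduces to $P_N(x,S)\geq \epsilon_N\,\pi(S)$. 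The only point to watch is the direction of the inequality after inverting the bound on the expectation, which is fine because $\mathbb{E}_{\mathbf{1},x,\mathbf{2},y}^N[\hat\gamma_T^N(Z_{1:T})]>0$ (it equals $\gamma_T$ in expectation over the c$^2$SMC, and in particular is positive since the $G_t$ are nonnegative and $\gamma_T$ appears as an additive contribution in the expansion of Proposition~\ref{prop:doubleconditionalSMC}).

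For the limit, I would simply observe that for fixed $T$ the factor $(1-1/N)^T \to 1$ and $(1-2/N)^T \to 1$ as $N\to\infty$, so $1-(1-2/N)^T\to 0$. Hence the bracketed correction in the denominator, namely
\[
\Bigl[1-(1-2/N)^T\Bigr]\Bigl[\tfrac{\prod_{t=1}^T \bar G_t}{\gamma_T}-1\Bigr],
\]
tends to $0$ (the second factor is a fixed finite constant under the hypothesis $\bar G_t<\infty$), so the denominator tends to $1$ and $\epsilon_N \to 1$.

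There is no real obstacle here; the substantive work was done in Propositions~\ref{prop:uniformminorizationcrude} and~\ref{prop:boundepsilonNwithuniformboundG}. The only thing worth flagging in the write-up is that $\epsilon_N>0$ for every finite $N\geq 2$ (which is immediate from $\bar G_t<\infty$ and $\gamma_T>0$), so that Corollary~\ref{cor:uniformminorbymu} applies and the conclusions of Proposition~\ref{prop:boundsspectralgapandvarianceforP_N} in~\citep{andrieu2015uniformsupplement} transfer to $P_N$ with this explicit constant.
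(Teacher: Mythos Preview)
Your proposal is correct and matches the paper's approach exactly: the paper states this corollary without an explicit proof, treating it as the immediate combination of Propositions~\ref{prop:uniformminorizationcrude} and~\ref{prop:boundepsilonNwithuniformboundG} followed by the obvious limit computation. One small inaccuracy in your parenthetical: the expectation $\mathbb{E}_{\mathbf{1},x,\mathbf{2},y}^{N}[\hat\gamma_T^N(Z_{1:T})]$ does \emph{not} equal $\gamma_T$ under the c$^2$SMC law (unbiasedness holds only under the unconditional SMC law); its positivity follows instead from $\hat\gamma_T^N(Z_{1:T})\geq N^{-T}\prod_{t}G_t(x_t)>0$ pointwise, or directly from the expansion in Proposition~\ref{prop:doubleconditionalSMC}.
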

It should be clear that despite Corollary~\ref{cor:convergence_epsilon},
the term $\prod_{t=1}^{T}\bar{G}_{t}/\gamma_{T}$ typically grows
exponentially fast with $T$ whenever the potentials are not constant
functions. Therefore, Proposition~\ref{prop:boundepsilonNwithuniformboundG}
suggests that the number of particles $N$ should grow exponentially
with $T$ in general. However, stronger assumptions on the system
under consideration will allow us to maintain a given lower bound
on $\epsilon_{N}$ by increasing $N$ only linearly with $T$. We
first state our main result using the abstract condition (A\ref{hyp:mixingabstract})
and then show that classical strong mixing conditions (A\ref{hyp:strongmixingpotentialassumptions})
imply (A\ref{hyp:mixingabstract}). 
\begin{prop}
\label{prop:mixing_bound}Assume (A\ref{hyp:mixingabstract}), then
for any $N\geq2$
\[
\mathbb{E}_{\mathbf{1},x,\mathbf{2},y}^{N}\left[\hat{\gamma}_{T}^{N}\bigl(Z_{1:T}\bigr)\right]\leq\gamma_{T}\left(1+\frac{2(\alpha-1)}{N}\right)^{T}\quad.
\]
\end{prop}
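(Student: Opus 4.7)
The plan is to start from the exact formula provided by Proposition~\ref{prop:doubleconditionalSMC}, bound each factor appearing in $C_{T,s}$ via the mixing assumption~(A\ref{hyp:mixingabstract}), exploit the telescoping structure that emerges, and finish with an application of the binomial theorem.

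The key observation is that $G_{p,q}(z) = Q_{p,q}(1)(z)$, so~(A\ref{hyp:mixingabstract}) translates into the pointwise bound
\[
G_{p,q}(z) \le \alpha\, \eta_{p}Q_{p,q}(1) = \alpha\, \frac{Q_{0,p}\bigl(Q_{p,q}(1)\bigr)}{G_{0,p}} = \alpha\,\frac{G_{0,q}}{G_{0,p}},
\]
where the last equality uses the semigroup/tower property $Q_{0,p}\bigl(Q_{p,q}(1)\bigr)=Q_{0,q}(1)=G_{0,q}$. In particular, every bracket appearing in $C_{T,s}\bigl(\mathbf{i},x,y\bigr)$ satisfies
\[
G_{i_{m},i_{m+1}}\bigl(x_{i_{m}}\bigr)+G_{i_{m},i_{m+1}}\bigl(y_{i_{m}}\bigr) \le 2\alpha\, \frac{G_{0,i_{m+1}}}{G_{0,i_{m}}}.
\]
Taking the product over $m=1,\dots,s-1$, the ratios telescope and (using $i_{s}=T+1$ together with $G_{0,T+1}=\gamma_{T}$) I obtain
\[
G_{0,i_{1}}\,C_{T,s}\bigl(\mathbf{i},x,y\bigr) \le (2\alpha)^{s-1}\, G_{0,i_{1}}\,\frac{G_{0,T+1}}{G_{0,i_{1}}} = (2\alpha)^{s-1}\,\gamma_{T},
\]
with the convention that the bound is $\gamma_{T}$ for $s=1$ (matching the empty-product case exactly).

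Next I count $|\mathcal{I}_{T+1,s}|$: the set consists of strictly increasing sequences $i_{1}<\cdots<i_{s}=T+1$ with $i_{1}\ge 1$, so choosing $i_{1},\dots,i_{s-1}$ is choosing an $(s-1)$-subset of $\{1,\dots,T\}$, giving $|\mathcal{I}_{T+1,s}|=\binom{T}{s-1}$. Substituting the above bound into the exact expression of Proposition~\ref{prop:doubleconditionalSMC} yields
\[
\mathbb{E}_{\mathbf{1},x,\mathbf{2},y}^{N}\bigl[\hat{\gamma}_{T}^{N}(Z_{1:T})\bigr] \le \frac{\gamma_{T}}{N^{T}}\sum_{s=1}^{T+1}\binom{T}{s-1}(N-2)^{T+1-s}(2\alpha)^{s-1}.
\]
Shifting the summation index via $r=s-1$ and applying the binomial theorem gives $(N-2+2\alpha)^{T}$, so the bound collapses to $\gamma_{T}\bigl(1+\tfrac{2(\alpha-1)}{N}\bigr)^{T}$, which is the desired inequality.

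The only non-routine step is the translation of~(A\ref{hyp:mixingabstract}) into a bound with the correct telescoping ratio $G_{0,i_{m+1}}/G_{0,i_{m}}$ (as opposed to a cruder bound that would produce exponential factors in $T$). Once that is in place, the counting of $\mathcal{I}_{T+1,s}$ and the binomial identity are straightforward and produce the sharp geometric bound independent of $x,y$.
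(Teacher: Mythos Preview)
Your proof is correct and follows essentially the same route as the paper: both start from Proposition~\ref{prop:doubleconditionalSMC}, use (A\ref{hyp:mixingabstract}) to bound each bracket by $2\alpha\,\eta_{i_m}G_{i_m,i_{m+1}}$, exploit the telescoping identity $G_{0,i_1}\prod_{m=1}^{s-1}\eta_{i_m}G_{i_m,i_{m+1}}=\gamma_T$, count $|\mathcal{I}_{T+1,s}|=\binom{T}{s-1}$, and finish with the binomial theorem. The only cosmetic difference is that you write the bound directly as the telescoping ratio $G_{0,i_{m+1}}/G_{0,i_m}$, whereas the paper first expresses $\gamma_T$ as the product $G_{0,i_1}\prod_m \eta_{i_m}G_{i_m,i_{m+1}}$ and then divides; the two presentations are equivalent.
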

\begin{proof}
First notice that for any $1\leq k\leq n$
\[
Q_{0,n}(1)=Q_{0,k}(1)\frac{Q_{0,n}(1)}{Q_{0,k}(1)}=Q_{0,k}(1)\eta_{k}Q_{k,n}(1)\quad,
\]
and therefore for any $s\in\{1,\ldots,T\}$ and $0<i_{1}<\cdots<i_{s-1}<i_{s}=T+1$
with the notation defined earlier,
\[
Q_{0,T}(1)=Q_{0,i_{1}}(1)\prod_{k=1}^{s-1}\eta_{i_{k}}Q_{i_{k},i_{k+1}}(1)=G_{0,i_{1}}\prod_{k=1}^{s-1}\eta_{i_{k}}G_{i_{k},i_{k+1}}\quad,
\]
and from (A\ref{hyp:mixingabstract}), with $\bar{G}_{p,q}:=\sup_{z\in\mathsf{Z}}G_{p,q}(z)$,
and applying Proposition \ref{prop:doubleconditionalSMC} yields the
following upper bound for $\mathbb{E}_{\mathbf{1},x,\mathbf{2},y}^{N}\left[\hat{\gamma}_{T}^{N}\bigl(Z_{1:T}\bigr)\right]$:
\begin{eqnarray*}
 & {\displaystyle \sum_{s=1}^{T+1}} & \left(\frac{N-2}{N}\right)^{T+1-s}\frac{2^{s-1}}{N{}^{s-1}}\sum_{0<i_{1}<\cdots<i_{s-1}<i_{s}=T+1}G_{0,i_{\text{1}}}\prod_{m=1}^{s-1}\frac{1}{2}G_{i_{m},i_{m+1}}^{1+2}\\
 & \leq & \gamma_{T}\left(\frac{N-2}{N}\right)^{T}+\gamma_{T}\sum_{s=2}^{T+1}\left(\frac{N-2}{N}\right)^{T+1-s}\frac{2^{s-1}}{N{}^{s-1}}\sum_{\mathcal{I}_{T+1,s}}\frac{G_{0,i_{\text{1}}}}{G_{0,i_{1}}}\prod_{m=1}^{s-1}\frac{\bar{G}_{i_{m},i_{m+1}}}{\eta_{i_{k}}G_{i_{k},i_{k+1}}}\\
 & \le & \gamma_{T}\sum_{s=1}^{T+1}\binom{T}{s-1}\left(\frac{N-2}{N}\right)^{T+1-s}\frac{2^{s-1}}{N{}^{s-1}}\alpha^{s-1}\quad,
\end{eqnarray*}
and we conclude by an application of the binomial theorem.\end{proof}
\begin{cor}
\label{cor:linear_in_T_epsilon_N_bound}Propositions~\ref{prop:uniformminorizationcrude}
and~\ref{prop:boundepsilonNwithuniformboundG} together imply that
for any $(x,S)\in\mathsf{X}\times\mathcal{B}(\mathsf{X})$,
\[
P_{N}(x,S)\geq\epsilon_{N}\pi(S)\quad\text{with}\quad\epsilon_{N}=\left(\frac{1-1/N}{1+\frac{2(\alpha-1)}{N}}\right)^{T}\quad.
\]
Now, let $N-1\geq CT$ for some $C>0$. Then $\epsilon_{N}\geq\exp\left(-\frac{2\alpha-1}{C}\right)$.\end{cor}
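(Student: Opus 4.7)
The plan is to simply combine the two previously established ingredients and then do a short elementary calculation. Proposition~\ref{prop:uniformminorizationcrude} already gives
\[
P_N(x,S) \;\geq\; \int_S \frac{\gamma_T\,(1-1/N)^T}{\mathbb{E}_{\mathbf{1},x,\mathbf{2},y}^N[\hat\gamma_T^N(Z_{1:T})]}\,\pi(\mathrm{d}y),
\]
so under (A\ref{hyp:mixingabstract}) one may substitute the upper bound from Proposition~\ref{prop:mixing_bound},
\[
\mathbb{E}_{\mathbf{1},x,\mathbf{2},y}^N\bigl[\hat\gamma_T^N(Z_{1:T})\bigr] \;\leq\; \gamma_T\left(1+\tfrac{2(\alpha-1)}{N}\right)^T,
\]
uniformly in $x,y$. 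The $\gamma_T$ factors cancel, giving $P_N(x,S)\geq\epsilon_N\pi(S)$ with $\epsilon_N=\bigl((1-1/N)/(1+2(\alpha-1)/N)\bigr)^T$, which is the first claim.

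For the second claim I would rewrite
\[
\epsilon_N \;=\; \left(\frac{N-1}{N+2\alpha-2}\right)^T \;=\; \left(1-\frac{2\alpha-1}{N+2\alpha-2}\right)^T,
\]
and apply the elementary inequality $\log(1-x)\geq -x/(1-x)$ valid for $x\in[0,1)$, whose role is to cancel the extra $2\alpha-2$ term appearing in the denominator. This yields
\[
\log \epsilon_N \;\geq\; -\,T\cdot\frac{(2\alpha-1)/(N+2\alpha-2)}{1-(2\alpha-1)/(N+2\alpha-2)} \;=\; -\,\frac{T(2\alpha-1)}{N-1}.
\]
Invoking the hypothesis $N-1\geq CT$ then gives $T/(N-1)\leq 1/C$, hence $\log\epsilon_N\geq -(2\alpha-1)/C$, and exponentiating delivers the stated lower bound $\epsilon_N\geq\exp\bigl(-(2\alpha-1)/C\bigr)$.

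There is no real obstacle here: the corollary is a direct arithmetic consequence of results already in hand. The only judgement involved is choosing the right elementary inequality for $\log(1-x)$; the bound $\log(1-x)\geq -x/(1-x)$ is preferable to, say, $\log(1-x)\geq -x-x^2$ precisely because the $1-x$ denominator exactly absorbs the $2\alpha-2$ term, producing the clean dependence on $T/(N-1)$ that makes the ``$N$ linear in $T$'' scaling visible.
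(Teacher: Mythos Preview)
Your proof is correct and essentially identical to the paper's. The paper rewrites $\epsilon_N=\bigl(1+\tfrac{2\alpha-1}{N-1}\bigr)^{-T}$ and applies $\log(1+y)\le y$; your inequality $\log(1-x)\ge -x/(1-x)$ with $x=\tfrac{2\alpha-1}{N+2\alpha-2}$ is the same statement after the substitution $1-x=(1+y)^{-1}$, and both reduce to $\log\epsilon_N\ge -T(2\alpha-1)/(N-1)$. You also correctly invoke Proposition~\ref{prop:mixing_bound} rather than Proposition~\ref{prop:boundepsilonNwithuniformboundG}, matching the paper's own proof (the corollary's reference to the latter appears to be a typo).
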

\begin{proof}
Propositions~\ref{prop:uniformminorizationcrude} and~\ref{prop:mixing_bound}
together imply that 
\[
\epsilon_{N}\geq\left(1+\frac{2\alpha-1}{N-1}\right)^{-T}.
\]
Since $(N-1)\geq CT$ for some $C>0$, and $\log(1+x)\leq x$ for
all $x\geq0$,
\[
\left(1+\frac{2\alpha-1}{N-1}\right)^{T}\le\left(1+\frac{2\alpha-1}{CT}\right)^{T}\leq\exp\left(\frac{2\alpha-1}{C}\right)\quad.\hfill\qedhere
\]
\end{proof}
\begin{rem}
\label{rem:tuning}The combination of the upper bound of ${\rm var}(f,P_{N})$
in Theorem~\ref{thm:THEtheorem} with Corollary~\ref{cor:linear_in_T_epsilon_N_bound}
suggests a rough rule of thumb to select $N$ for the i-cSMC Markov
kernel. In particular, there is generally a tradeoff between iterating
a less computationally intensive Markov kernel more times and iterating
a more computationally intensive expensive fewer times. This suggests
that one should minimize the function $f(N):=N{\rm var}(f,P_{N})$.
While an analytic expression for ${\rm var}(f,P_{N})$ is not available
we can minimize its upper bound 
\[
(CT+1)\left\{ 2\exp\left(\frac{2\alpha-1}{C}\right)-1\right\} ,
\]
with respect to $C$. Assuming that we are in the scenario where $N\gg1$
and therefore $CT+1\approx CT$ one then finds the unique minimum
\[
C^{*}=\frac{2\alpha-1}{{\rm LambertW}(-\frac{1}{2\exp(1)})+1}\approx1.302\left(2\alpha-1\right)\;,
\]
(where\textcolor{black}{{} ${\rm Lambert_{W}}$ is the principal branch
of the Lambert W }function) or correspondingly
\[
\epsilon_{N}^{*}\approx0.464\,.
\]

\end{rem}
Hence, under (A\ref{hyp:mixingabstract}) it is only required for
$N$ to scale linearly with $T$ in order to maintain a non-vanishing
ergodicity rate. Following, e.g.,~\citep{delmoral:2004,cerou-delmoral-guyader}
we make the following assumptions on $\{M_{t}\}$ and the potentials
$\{G_{t}\}$ which combined define an $m$-step ``strong mixing''
condition which automatically implies (A\ref{hyp:mixingabstract}).
The following result relies on classical arguments~\citep[Lemma 4.3]{delmoral:2004,cerou-delmoral-guyader}
\begin{lem}
\label{lem:A2impliesA1}Assume (A\ref{hyp:strongmixingpotentialassumptions}).
Then for any $k\in\mathbb{Z}_{+}$ we have
\[
\sup_{z,z'\in\mathsf{Z}^{2}}\frac{Q_{p,p+k}(1)(z)}{Q_{p,p+k}(1)(z')}\leq\beta\delta\quad,
\]
i.e., (A\ref{hyp:mixingabstract}) is satisfied.
\end{lem}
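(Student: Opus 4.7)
The plan is to establish the pointwise ratio bound $Q_{p,p+k}(1)(z)\leq\beta\delta\cdot Q_{p,p+k}(1)(z')$ for all $z,z'\in\mathsf{Z}$, which is exactly the lemma's display. Assumption (A\ref{hyp:mixingabstract}) is then immediate: since $\eta_pQ_{p,p+k}(1)\geq\inf_{z'}Q_{p,p+k}(1)(z')$, the ratio $Q_{p,p+k}(1)(z)/\eta_pQ_{p,p+k}(1)$ is dominated by $\sup_{z,z'}Q_{p,p+k}(1)(z)/Q_{p,p+k}(1)(z')$, so one may take $\alpha=\beta\delta$. Writing $\bar{G}_j:=\sup_{z}G_j(z)$ and $\underline{G}_j:=\inf_{z}G_j(z)$, (A\ref{hyp:strongmixingpotentialassumptions})(b) delivers the uniform bound $\bar{G}_j/\underline{G}_j\leq\delta^{1/m}$ for each $j$ (the degenerate case $G_j\equiv 0$ makes both sides of the target inequality vanish and can be discarded).

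I would split on whether $k\leq m$ or $k>m$. The short-horizon case is immediate: bounding the integrand defining $Q_{p,p+k}(1)(z)$ above by $\prod_{j=p}^{p+k-1}\bar{G}_j$ and the one defining $Q_{p,p+k}(1)(z')$ below by $\prod_{j=p}^{p+k-1}\underline{G}_j$ reduces the ratio to at most $\delta^{k/m}\leq\delta\leq\beta\delta$, with the Markov dynamics playing no role at all.

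For $k>m$, the Markov property factorises
\[
Q_{p,p+k}(1)(z)=\mathbb{E}\!\left[\prod_{j=p}^{p+m-1}G_j(Z_j)\cdot Q_{p+m,p+k}(1)(Z_{p+m})\,\Big|\,Z_p=z\right].
\]
Replacing the first $m$ potentials by their suprema exposes the $m$-step marginal $\bar{M}_{p,p+m}(z,\cdot)$ integrated against $Q_{p+m,p+k}(1)$, at which point (A\ref{hyp:strongmixingpotentialassumptions})(a) permits swapping $z$ for $z'$ at a cost of $\beta$. Repeating the argument with infima in place of suprema gives a matching lower bound on $Q_{p,p+k}(1)(z')$ featuring the same $\bar{M}_{p,p+m}(z',Q_{p+m,p+k}(1))$ factor, which cancels, leaving the overall bound $\beta\cdot\prod_{j=p}^{p+m-1}\bar{G}_j/\underline{G}_j\leq\beta\delta$. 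The only mildly delicate point is the factorisation step above, which rests on the Markov structure of $\{Z_t\}$ and the definition of $Q_{p+m,p+k}$; everything else is sup/inf arithmetic.
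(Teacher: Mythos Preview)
Your proof is correct and follows essentially the same approach as the paper: both split on whether the horizon $k$ exceeds $m$, handle the short-horizon case by crude sup/inf bounds on the potentials (yielding $\delta^{k/m}\leq\beta\delta$), and in the long-horizon case strip off the first $m$ potentials at cost $\delta$ to expose the $m$-step marginal $\bar{M}_{p,p+m}$, to which (A\ref{hyp:strongmixingpotentialassumptions})(a) is applied at cost $\beta$. The only cosmetic difference is that the paper peels off the potentials one at a time via an induction whereas you bound all $m$ of them simultaneously; your derivation of (A\ref{hyp:mixingabstract}) from the displayed ratio bound via $\eta_pQ_{p,p+k}(1)\geq\inf_{z'}Q_{p,p+k}(1)(z')$ is also fine.
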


\section{Necessity of the boundedness assumption and a conjecture\label{sec:conjectureonboundedness}}

Proposition~\ref{prop:boundepsilonNwithuniformboundG} showed that
the i-cSMC kernel is uniformly ergodic if the potentials are bounded.
We study here the opposite case, where at least one of the potentials
is unbounded. We discover that then the algorithm cannot be uniformly
ergodic (Proposition~\ref{prop:non-uniform}), and in many cases
the algorithm cannot be geometrically ergodic (Proposition~\ref{prop:non-geom-easy}
and Lemma~\ref{lem:full-supp}; Remark~\ref{rem:strictly_positive}).
We believe that the latter holds in general (Conjecture~\ref{conj:general-necessity}),
but a proof has remained elusive. This dichotomy of algorithms which
are uniformly ergodic and sub-geometrically ergodic would be in perfect
analogy with the behaviour of the independent Metropolis--Hastings~\citep[~Theorem 2.1]{mengersen-tweedie}.

We will denote hereafter the marginal densities of $\pi$ by 
\begin{equation}
\pi_{t:u}(A):=\pi(\mathsf{Z}^{t-1}\times A\times\mathsf{Z}^{T-u})\qquad\text{for \ensuremath{A\in\mathcal{B}(\mathsf{Z}^{u-t+1})},}\label{eq:definitionpi_t:u}
\end{equation}
where $1\le t\le u\le T$ and we use the shorthand $\pi_{t}(A):=\pi_{t:t}(A)$.

In this section, we will assume that $\mathsf{S}\in\mathcal{B}(\mathsf{Z})^{T}$
is a fixed set such that for all $x\in\mathsf{S}$, $\prod_{t=1}^{T}G_{t}(x_{t})>0$
and $\pi(\mathsf{S})=1$. Further, $\mathsf{S}$ contains all possible
starting points of the algorithm, that is, we assume that the state
space of the i-cSMC is $\mathsf{S}$. In the discrete case, the minimal
$\mathsf{S}$ consists of the points of positive $\pi$-measure, and
in the continuous case where $\pi$ admits a density, the set $\mathsf{S}$
can be taken as the set where the density is positive.

Further, we will assume that $\pi_{1}$ is not concentrated on a single
point. We can do this without loss of generality, because if $\pi_{1},\ldots,\pi_{t}$
were concentrated on single points of the state space, the algorithm
would be deterministic until $\pi_{t+1}$ and we could consider the
i-cSMC for $\pi'=\pi_{t+1:T}$.
\begin{prop}
\label{prop:non-uniform} Suppose $\pi_{t}$-${\rm ess}\sup_{x_{t}}G_{t}(x_{t})=\infty$
for some $t\in[T]$. Then, the i-cSMC kernel $P_{N}$ is not uniformly
ergodic for any $N\in\mathbb{N}$. \end{prop}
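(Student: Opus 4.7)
The plan is to prove that $\sup_{x\in\mathsf{S}}\|P_N^n(x,\cdot)-\pi\|_{TV}=1$ for every $n\ge 1$, which rules out uniform ergodicity. Since $G_t$ is real-valued, the sets $A_c:=\{z\in\mathsf{S}:G_t(z_t)>c\}$ satisfy $\pi(A_c)\downarrow 0$ as $c\uparrow\infty$, so it suffices to exhibit, for each $n$ and each $\delta>0$, some $c$ with $\pi(A_c)<\delta$ together with some $x\in\mathsf{S}$ such that $G_t(x_t)>c$ and $P_N^n(x,A_c)>1-\delta$; the bound $\|P_N^n(x,\cdot)-\pi\|_{TV}\ge P_N^n(x,A_c)-\pi(A_c)>1-2\delta$ then forces $\sup_x\|P_N^n(x,\cdot)-\pi\|_{TV}=1$.

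The driving mechanism is a ``total coalescence'' event at level $t$. At one application of $P_N$ with reference path $y$, write $B$ for the event $\{a_t^i=1:i\in\{2,\ldots,N\}\}$ when $t<T$, and for $\{A_T=1\}$ when $t=T$. Combining the selection rule (\ref{eq:def_of_I_i})--(\ref{eq:PN_defn}) with the reference-lineage convention $a_{s-1}^1=1$ built into (\ref{eq:def_P_=00007B1,x=00007D-other-times}), one checks that on $B$ every selected lineage $\mathbf{i}$ must satisfy $i_s=1$ for all $s\le t$, so that on $B$ the output path coincides with $y$ on its first $t$ coordinates.

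Now let $\mathcal{E}_n$ be the event that $B$ occurs at each of the first $n$ applications of $P_N$. By induction, on $\mathcal{E}_n$ the chain satisfies $Y^{(k)}_{1:t}=x_{1:t}$ for every $k\le n$. The crucial observation is that the conditional probability of $B$ at iterate $k$, given the state $Y^{(k-1)}$, depends on $Y^{(k-1)}$ only through $Y^{(k-1)}_{1:t}$: under $\mathbb{P}_{\mathbf{1},y}^N$ the joint law of $Z_t^{2:N}$ depends on the reference $y$ only through $y_{1:t-1}$, while the prefactor $G_t(y_t)$ uses only $y_t$. Consequently, on $\mathcal{E}_{k-1}$ this conditional probability equals the deterministic number
\[
p(x):=\mathbb{E}_{\mathbf{1},x}^N\!\left[\left(\frac{G_t(x_t)}{\sum_{j=1}^N G_t(Z_t^j)}\right)^{N-1}\right]
\]
(with the exponent $N-1$ replaced by $1$ when $t=T$); independence of the fresh randomness across iterates then yields $\mathbb{P}(\mathcal{E}_n\mid x)=p(x)^n$, whence $P_N^n(x,A_c)\ge p(x)^n$ whenever $G_t(x_t)>c$.

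It remains to exhibit a sequence in $\mathsf{S}$ along which $p(x)\to 1$ and $G_t(x_t)\to\infty$. Taking for definiteness the natural product form $\mathsf{S}=\prod_{s=1}^T\{z\in\mathsf{Z}:G_s(z)>0\}$, fix $x_s^0\in\{G_s>0\}$ for each $s\ne t$; since $\pi_t$-${\rm ess}\sup_{z_t}G_t(z_t)=\infty$, one can choose $x_t^{(c)}\in\{G_t>0\}$ with $G_t(x_t^{(c)})\uparrow\infty$, producing a sequence $x^{(c)}\in\mathsf{S}$. With $x_{1:t-1}^0$ held fixed, the law of $\sum_{j\ge 2}G_t(Z_t^j)$ under $\mathbb{P}_{\mathbf{1},x^{(c)}}^N$ does not depend on $c$ and is a.s.\ finite (because $G_t$ is real-valued), so bounded convergence gives $p(x^{(c)})\to 1$, completing the argument. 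The main point to watch is this very last step: the sequence must be realisable inside $\mathsf{S}$ while keeping its first $t-1$ coordinates fixed, which is automatic for the product form of $\mathsf{S}$ but would require a supplementary tightness argument on the law of $Z_t^{2:N}$ if one wished to work with a non-product state space.
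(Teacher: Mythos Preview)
Your argument is correct for the product-form state space and takes a genuinely different route from the paper. You exploit the ``total coalescence'' event at time $t$ to freeze the first $t$ coordinates of the output; because the probability $p(x)$ of this event depends only on $x_{1:t}$, iteration gives the clean bound $P_N^n(x,A_c)\ge p(x)^n$, and fixing $x_{1:t-1}$ while sending $G_t(x_t)\to\infty$ forces $p(x)\to 1$ by bounded convergence. The paper, by contrast, focuses on retention of the \emph{first} coordinate $x_1$ alone and, crucially, obtains a bound $P_N(x,\{x_1\}^{\complement}\times\mathsf{Z}^{T-1})\le\phi(G_t(x_t))$ with $\phi$ independent of $x_{1:t-1}$. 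This is achieved via a coupling to an $N$-ary tree (Lemma~\ref{lem:choose_c1c2}): the sum $U$ of time-$t$ weights of particles whose ancestry avoids the reference is dominated by a random variable $V$ whose law does not depend on $x$ at all. The paper then chains two one-step bounds (escape from $x_1$, and drift to low-$G_t$ regions) through sub-probability kernels to control $n$ steps.

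What each approach buys: yours is conceptually simpler and delivers an explicit $p(x)^n$ bound, but it relies on being able to send $G_t(x_t)\to\infty$ while holding $x_{1:t-1}$ fixed inside $\mathsf{S}$. As you correctly flag, this is immediate for the product $\mathsf{S}$ but not for a general $\pi$-full set $\mathsf{S}$ as in the paper's standing assumption; so as written your proof covers only that case. The paper's tree-coupling device sidesteps this entirely: since $\phi$ depends only on $G_t(x_t)$, \emph{any} $x\in\mathsf{S}$ with $G_t(x_t)$ large will do, and such $x$ exist because $\pi(\mathsf{S})=1$ while $\pi_t(\{G_t>c\})>0$ for every $c$. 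The tightness manoeuvre you allude to for non-product $\mathsf{S}$ is precisely what the paper deploys later for the non-geometric statement (Lemma~\ref{lem:full-supp}), but the tree bound avoids it here.
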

\begin{proof}
If the i-cSMC kernel is uniformly ergodic, then there exist $K<\infty$
and $\rho\in(0,1)$ such that 
\[
\sup_{x\in\mathsf{S}}\|P_{N}^{n}(x,\,\cdot\,)-\pi(\,\cdot\,)\|_{TV}\le K\rho^{n}\qquad\text{for all \ensuremath{n\in\mathbb{N}}.}
\]
Fix $\epsilon'>0$ and let $n\in\mathbb{N}$ be such that $K\rho^{n}\le\epsilon'$.
We will prove that there exists a set $B_{\epsilon'}\in\mathcal{B}(\mathsf{Z})$
such that $\pi_{1}(B_{\epsilon'})>0$ and $\inf_{x\in B_{\epsilon'}}P_{N}^{n}(x,\{x_{1}\}\times\mathsf{Z}^{T-1})\ge1-\epsilon'$.
For all $x\in B_{\epsilon'}$, we have $|P_{N}^{n}(x,\{x_{1}\}\times\mathsf{Z}^{T-1})-\pi_{1}(\{x_{1}\})|\le K\rho^{n}\le\epsilon'$.
This, with $\epsilon'>0$ small enough, will contradict $\pi_{1}(\{x_{1}\})<1$.

Lemma~\ref{lem:choose_c1c2} shows that there exists $\phi:\mathbb{R}_{+}\to\mathbb{R}_{+}$
such that $\lim_{g\to\infty}\phi(g)=0$, and 
\[
P_{N}(x,\{x_{1}\}^{\complement}\times\mathsf{Z}^{T-1})\le\phi(G(x_{t})).
\]
Denote the level set $L_{t}(\underline{G}):=\{x_{t}\in\mathsf{Z}\,:\,G_{t}(x_{t})\le\underline{G}\}$.
Lemma~\ref{lem:choose_c1c2} shows that there exists $c_{2}=c_{2}(N)\in[1,\infty)$
such that for $G_{t}(x_{t})\ge\underline{G}$ 
\[
P_{N}(x,\mathsf{Z}^{t-1}\times L_{t}(\underline{G})\times\mathsf{Z}^{T-t})\le c_{2}\underline{G}/G_{t}(x_{t}).
\]
Let $\epsilon\in(0,1)$ and define $\delta:=\epsilon/c_{2}$ and let
$G_{*}$ be large enough so that $\phi(\delta^{n}G_{*})\le\epsilon$.
Define the (sub-probability) kernels $\mu_{\bar{G}}(x,{\rm d}y):=P_{N}(x,{\rm d}y)\delta_{x_{1}}(y_{1})\mathbb{I}\left\{ G_{t}(y_{t})\ge\bar{G}\right\} $
on $(\mathsf{S},\mathcal{B}(\mathsf{S}))$ for any $\bar{G}>0$ and
observe that we may estimate 
\begin{align*}
 & \mathbb{I}\left\{ G_{t}(x_{t})\ge G_{*}\right\} P_{N}^{n}(x,\{x_{1}\}\times\mathsf{Z}^{T-1})\\
 & \ge\mathbb{I}\left\{ G_{t}(x_{t})\ge G_{*}\right\} \int\mu_{\delta G_{*}}(x,{\rm d}y^{(2)})\int\mu_{\delta^{2}G_{*}}(y^{(2)},{\rm d}y^{(3)})\cdots\int\mu_{\delta^{n-1}G_{*}}(y^{(n-1)},{\rm d}y^{(n)}).
\end{align*}
We may estimate for any $i\in[n]$ and all $x\in\mathsf{S}$ such
that $G_{t}(x_{t})\ge\delta^{i-1}G_{*}$, 
\begin{align*}
\int\mu_{\delta^{i}G_{*}}(x,{\rm d}y) & \ge1-P_{N}(x,\{x_{1}\}^{\complement}\times\mathsf{Z}^{T-1})-P_{N}(x,\mathsf{Z}^{t-1}\times L_{t}(\delta^{i}G_{*})\times\mathsf{Z}^{T-t})\\
 & \ge1-2\epsilon.
\end{align*}
We conclude that for $x\in\mathsf{S}$ such that $G_{t}(x_{t})\ge G_{*}$,
\[
P_{N}^{n}(x,\{x_{1}\}\times\mathsf{Z}^{T-1})\ge(1-2\epsilon)^{n}.
\]
This proves the claim, as $\epsilon>0$ was arbitrary.\end{proof}
\begin{lem}
\label{lem:choose_c1c2}For all $x\in\mathsf{S}$ and all $\underline{G}\in\mathbb{R}_{+}$,
\begin{enumerate}
\item $P_{N}(x,\{x_{1}\}^{\complement}\times\mathsf{Z}^{T-1})\le\phi(G(x_{t}))$,
\item $P_{N}(x,\mathsf{Z}^{t-1}\times L_{t}(\underline{G})\times\mathsf{Z}^{T-t})\le(N-1)^{2}\underline{G}/G_{t}(x_{t})\quad\text{whenever \ensuremath{G_{t}(x_{t})\ge\underline{G}}}$.
\end{enumerate}
where $\phi:\mathbb{R}_{+}\to\mathbb{R}_{+}$ is a function such that
$\lim_{g\to\infty}\phi(g)=0$.\end{lem}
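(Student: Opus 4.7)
The plan is to handle the two parts separately. Part (2) follows from a direct one-step analysis of the backward trace of the cSMC, while part (1) is established via the double-conditional SMC representation of Lemma~\ref{lem:simple_correspondence}; the latter routing is what ensures that $\phi$ depends on $x$ only through $g=G_t(x_t)$.

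For (2), the assumption $G_t(x_t)\ge\underline{G}$ gives $z_t^1=x_t\notin L_t(\underline{G})$, so only lineages with $i_t\neq 1$ contribute and
\[
P_N(x,\mathsf{Z}^{t-1}\times L_t(\underline{G})\times\mathsf{Z}^{T-t})=\sum_{k=2}^N\mathbb{P}_{\mathbf{1},x}^N\bigl(i_t=k,\;G_t(Z_t^k)\le\underline{G}\bigr).
\]
For $t<T$, one writes $\{i_t=k\}=\bigcup_{l\neq 1}\{i_{t+1}=l,\;A_t^l=k\}$ (the case $i_{t+1}=1$ would force $i_t=1\neq k$), drops the $\{i_{t+1}=l\}$ indicator, then conditions on $Z_{1:t}$ to exploit $\mathbb{P}(A_t^l=k\mid Z_{1:t})=G_t(Z_t^k)/\sum_j G_t(Z_t^j)$ for $l\neq 1$. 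The indicator $\mathbb{I}\{G_t(Z_t^k)\le\underline{G}\}$ bounds the numerator by $\underline{G}$, while the denominator is at least $G_t(Z_t^1)=G_t(x_t)$; this gives at most $\underline{G}/G_t(x_t)$ per pair $(l,k)$ and the overall factor $(N-1)^2$. The case $t=T$ is handled identically using $i_T=A_T$, and in fact yields the stronger one-factor bound.

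For (1), the key structural observation is that the cSMC constraint $A_s^1=1$ forces $\{i_1\neq 1\}=\{\mathbf{i}\in\{2,\ldots,N\}^T\}$, since once the backward trace reaches index $1$ it remains there. Hence $P_N(x,\{x_1\}^\complement\times\mathsf{Z}^{T-1})\le\mathbb{P}_{\mathbf{1},x}^N(i_1\neq 1)=\sum_{\mathbf{i}\in\{2,\ldots,N\}^T}\mathbb{E}_{\mathbf{1},x}^N[I_{\mathbf{i}}(Z,A,\mathsf{X})]$. Applying Lemma~\ref{lem:simple_correspondence} and the exchangeability of indices $\{2,\ldots,N\}$ rewrites this as
\[
P_N(x,\{x_1\}^\complement\times\mathsf{Z}^{T-1})\le(N-1)^T\frac{\gamma_T}{N^T}\int\pi(\mathrm{d}y)\,\mathbb{E}_{\mathbf{1},x,\mathbf{2},y}^N\!\left[\frac{1}{\hat{\gamma}_T^N(Z)}\right].
\]
Under $\mathbb{P}_{\mathbf{1},x,\mathbf{2},y}^N$ we have $Z_{t'}^1=x_{t'}$ and $Z_{t'}^2=y_{t'}$ for every $t'$, giving the pathwise bound $\hat{\gamma}_T^N(Z)\ge N^{-T}\prod_{t'}(G_{t'}(x_{t'})+G_{t'}(y_{t'}))$. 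Inserting this and expanding $\pi(\mathrm{d}y)=\gamma_T^{-1}\prod_{t'}G_{t'}(y_{t'})M_{t'}(y_{t'-1},\mathrm{d}y_{t'})$ collapses the inequality to
\[
P_N(x,\{x_1\}^\complement\times\mathsf{Z}^{T-1})\le(N-1)^T\int\prod_{t'=1}^T\frac{G_{t'}(y_{t'})}{G_{t'}(x_{t'})+G_{t'}(y_{t'})}\,\prod_{t'=1}^T M_{t'}(y_{t'-1},\mathrm{d}y_{t'}).
\]
Each factor is in $[0,1]$; bounding all of them by $1$ except the $t$-th and marginalizing $y_{t'}$ for $t'\neq t$ against the prior Markov chain yields the candidate
\[
\phi(g):=(N-1)^T\int\frac{G_t(y_t)}{g+G_t(y_t)}\,\mathcal{M}_t(\mathrm{d}y_t),
\]
where $\mathcal{M}_t$ denotes the marginal of $Y_t$ under the prior $M_1,\ldots,M_T$. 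The integrand is dominated by $1$ and, since $G_t$ is finite-valued, tends to $0$ pointwise as $g\to\infty$, so dominated convergence gives $\phi(g)\to 0$.

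The main obstacle is producing a uniform-in-$x$ function $\phi$ that vanishes at infinity. A naive bound through the marginal law of $Z_t^j$ under $\mathbb{P}_{\mathbf{1},x}^N$ would involve $\mathbb{E}_x[\sum_{j\neq 1}G_t(Z_t^j)]$, which in general admits no uniform control in the full reference path $x_{1:t-1}$. Routing the argument through the doubly conditional SMC and the two-particle pathwise lower bound on $\hat{\gamma}_T^N$ is what absorbs the $x$-dependence into the single symmetric ratio $G_t(y_t)/(G_t(x_t)+G_t(y_t))$, leaving a $\phi$ that depends on $x$ only through $g=G_t(x_t)$.
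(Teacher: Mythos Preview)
Your proof of part~(2) is essentially the paper's argument, written out with the decomposition over the pair $(i_t,i_{t+1})=(k,l)$ made explicit; the paper folds the sum over $k$ into the single event $\{G_t(Z_t^{A_t^i})\le\underline{G}\}$ and sums over $i=i_{t+1}$, but the arithmetic is identical.

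For part~(1) you take a genuinely different route. The paper introduces an auxiliary $N$-ary tree $\{\hat Z_s^{\mathbf{i}}\}$, sampled from the Markov transitions without any resampling, which can be coupled to the cSMC so that the sum $U$ of potentials over non-coalescing time-$t$ particles is dominated by $V:=\sum_{\mathbf{i}\in\{2,\ldots,N\}^t}G_t(\hat Z_t^{\mathbf{i}})$. Since $V$ is manifestly independent of $x$, monotonicity of $u\mapsto u/(g+u)$ yields $\phi(g)=\mathbb{E}[V/(g+V)]$. You instead recycle Lemma~\ref{lem:simple_correspondence}: observing that the backward lineage can have $i_1\neq1$ only if $\mathbf{i}\in\{2,\ldots,N\}^T$, you rewrite the non-coalescence probability via the doubly conditional expectation, lower bound $\hat\gamma_T^N(Z)\ge N^{-T}\prod_{t'}(G_{t'}(x_{t'})+G_{t'}(y_{t'}))$ pathwise, and cancel against the density of $\pi$. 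Bounding all ratios but the $t$-th by $1$ isolates $\phi(g)=(N-1)^T\int \frac{G_t(y_t)}{g+G_t(y_t)}\,\mathcal{M}_t(\mathrm{d}y_t)$, which depends on $x$ only through $g=G_t(x_t)$ because the prior marginal $\mathcal{M}_t$ is $x$-free.

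Both arguments are correct. Yours is more economical in that it reuses machinery already in place for the minorization of Section~\ref{sec:Minorization-and-Dirichlet} and produces an explicit integral expression for $\phi$; the cost is the crude prefactor $(N-1)^T$, which is harmless since only the limit $\phi(g)\to0$ is needed. The paper's tree coupling is a standalone probabilistic construction that avoids this prefactor and may be of independent interest, but requires introducing and justifying the auxiliary process.
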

\begin{proof}
In both cases, we consider the case $t<T$; the special case $t=T$
can be treated similarly. In order to facilitate the theoretical analysis,
we introduce a non-standard implementation of the cSMC which relies
on the remark that at any time instant a given particle can only have
a maximum number $N$ of children. Hence when implementing the cSMC
it is always possible to draw $N$ children first and then decide
who is carried forward according to the standard selection mechanism.
It is in fact possible to push this idea further and, given a fixed
$x\in\mathsf{S}$, to sample the following $N$-ary tree of random
variables first 
\begin{align*}
\hat{Z}_{1}^{1} & =x_{1}, & \hat{Z}_{1}^{i} & \sim M_{1}(\cdot), & i\in[N]\setminus\{1\}\\
\hat{Z}_{2}^{1,1} & =x_{2}, & \hat{Z}_{2}^{i,j} & \sim M_{2}(\hat{Z}_{1}^{i},\cdot), & (i,j)\in[N]^{2}\setminus\{(1,1)\}\\
 &  &  & \vdots\\
\hat{Z}_{T}^{{\bf 1}} & =x_{T}, & \hat{Z}_{T}^{i_{1},\ldots,i_{T}} & \sim M_{T}(\hat{Z}_{T-1}^{i_{1},\ldots,i_{T-1}},\cdot), & (i_{1},\ldots,i_{T})\in[N]^{T}\setminus\{{\bf 1}\}\;,
\end{align*}
and then prune the tree using the selection mechanism of the cSMC
algorithm with fixed path $x\in\mathsf{S}$. As a result, each $Z_{t}^{j}$
in the cSMC is associated with some $\hat{Z}_{t}^{i}$. The construction
above permits the bound
\[
U:=\sum_{{\bf i}\in[N]^{t}}G_{t}(Z_{t}^{i_{t}})\mathbb{I}\left\{ i_{1}\neq1\right\} \prod_{p=2}^{t}\mathbb{I}\left\{ i_{p-1}=A_{p-1}^{i_{p}}\right\} \leq\sum_{{\bf i}\in\{2,\ldots,N-1\}^{t}}G_{t}(\hat{Z}_{t}^{{\bf i}})=:V,
\]
where $U$ corresponds to the sum of potentials associated with those
$Z_{t}^{j}$ whose ancestral lineage does not contain the value $1$.
It therefore follows that 
\begin{align*}
P_{N}(x,\{x_{1}\}^{\complement}\times\mathsf{Z^{T-1})}=\mathbb{E}_{{\bf 1},x}^{N}\left[\frac{U}{G_{t}(x_{t})+\sum_{j=2}^{N}G_{t}(Z_{t}^{j})}\right] & \leq\mathbb{E}_{{\bf 1},x}^{N}\left[\frac{U}{G_{t}(x_{t})+U}\right]\\
 & \le\mathbb{E}_{{\bf 1},x}^{N}\left[\frac{V}{G_{t}(x_{t})+V}\right],
\end{align*}
 because $u\mapsto u/(g+u)$ is increasing. Now, $V$ is a finite
non-negative random variable independent of $x$. We may define
\[
\phi(g):=\mathbb{E}_{{\bf 1},x}^{N}\left[\frac{V}{g+V}\right]
\]
which satisfies $\lim_{g\to\infty}\phi(g)=0$ by the monotone convergence
theorem.

For the second inequality, we can show similarly that for $G_{t}(x_{t})\ge\underline{G}$
\[
\mathbb{P}_{{\bf 1},x}^{N}\left[G_{t}(Z_{t}^{A_{t}^{i}})\leq\underline{G}\right]=\mathbb{E}_{{\bf 1},x}^{N}\left[\frac{\sum_{k=2}^{N}G_{t}(Z_{t}^{k})\mathbb{I}\left\{ G_{t}(Z_{t}^{k})\leq\underline{G}\right\} }{G_{t}(x_{t})+\sum_{k=2}^{N}G_{t}(Z_{t}^{k})}\right]\leq\frac{(N-1)\underline{G}}{G_{t}(x_{t})}
\]
and so 
\[
P_{N}(x,\mathsf{Z}^{t-1}\!\times L_{t}(\underline{G})\times\mathsf{Z}^{T-t})\!\le\!\sum_{i=2}^{N}\mathbb{P}_{{\bf 1},x}^{N}\left[G_{t}(Z_{t}^{A_{t}^{i}})\leq\underline{G}\right]\!\!=\!(N-1)\mathbb{P}_{{\bf 1},x}^{N}\left[G_{t}(Z_{t}^{A_{t}^{i}})\leq\underline{G}\right].\qedhere
\]

\end{proof}
To establish that $P_{N}$ cannot be even geometrically ergodic whenever
$\pi_{t}$-${\rm ess}\sup_{x_{t}}G_{t}(x_{t})=\infty$ for some $t\in[T]$
in many settings, we use Proposition~\ref{prop:sticky-non-geometric}.
This allows for the developments of Proposition~\ref{prop:non-geom-easy}
and Lemma~\ref{lem:full-supp}, leading to the desired result under
assumptions satisfied in many applications; see Remark~\ref{rem:strictly_positive}.
\begin{prop}
\label{prop:sticky-non-geometric}Suppose $P$ is an ergodic Markov
kernel on a state space $\big(\mathsf{X},\mathcal{B}(\mathsf{X})\big)$
with invariant distribution $\pi$. Suppose that for any $\epsilon,\delta>0$
there exists a set $A\in\mathcal{B}(\mathsf{X})$ such that $\pi(A)\in(0,\delta)$
and $\inf_{x\in A}P(x,A)\ge1-\epsilon$. Then $P$ is not geometrically
ergodic. \end{prop}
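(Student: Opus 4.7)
The approach I would take is proof by contradiction, using the sticky set to build a test function that violates the $L^2(\pi)$ spectral-gap consequence of geometric ergodicity. The proposition is invoked in Section~\ref{sec:conjectureonboundedness} for the kernel $P_N$, which is $\pi$-reversible by Lemma~\ref{lem:selfajoint_positive}, so I would invoke the standard equivalence (Roberts--Rosenthal) that, for $\pi$-reversible Markov kernels, geometric ergodicity is equivalent to the existence of an $L^2(\pi)$ spectral gap. That is, assuming $P$ geometrically ergodic, there exists $\rho \in [0,1)$ with
\[
\|P(f - \pi(f))\|_{L^2(\pi)} \le \rho\, \|f - \pi(f)\|_{L^2(\pi)}
\quad\text{for every } f \in L^2(\pi).
\]

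Given this $\rho$, I would pick $\epsilon, \delta > 0$ small enough that $1 - \epsilon - \delta > \rho$, and apply the hypothesis to obtain a set $A \in \mathcal{B}(\mathsf{X})$ with $\pi(A) \in (0,\delta)$ and $\inf_{x\in A} P(x,A) \ge 1 - \epsilon$. Taking the test function $f := \mathbb{I}_A$, one immediately computes $\|f - \pi(f)\|^2_{L^2(\pi)} = \pi(A)\bigl(1-\pi(A)\bigr)$. The stickiness of $A$ gives, for every $x \in A$, that $Pf(x) - \pi(f) = P(x,A) - \pi(A) \ge 1 - \epsilon - \pi(A) \ge 1 - \epsilon - \delta$, whence
\[
\|P(f - \pi(f))\|^2_{L^2(\pi)} \ge \int_A \bigl(P(x,A) - \pi(A)\bigr)^2 \pi(\mathrm{d}x) \ge \pi(A)(1 - \epsilon - \delta)^2.
\]
Dividing yields
\[
\frac{\|P(f - \pi(f))\|^2_{L^2(\pi)}}{\|f - \pi(f)\|^2_{L^2(\pi)}} \ge \frac{(1 - \epsilon - \delta)^2}{1 - \pi(A)} \ge (1 - \epsilon - \delta)^2 > \rho^2,
\]
contradicting the spectral bound. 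Hence $P$ cannot be geometrically ergodic.

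The main obstacle is that the proposition is stated for \emph{any} ergodic kernel, whereas the $L^2(\pi)$ spectral-gap characterization I rely on genuinely needs reversibility; fortunately, reversibility holds for $P_N$ by Lemma~\ref{lem:selfajoint_positive}, so the argument covers every application made in Section~\ref{sec:conjectureonboundedness}. Handling a general non-reversible $P$ from just the pointwise definition $\|P^n(x,\cdot)-\pi\|_{TV} \le M(x)\rho^n$ (with $M$ finite only $\pi$-a.e.) is more delicate: the direct iteration $P^n(x,A) \ge (1-\epsilon)^n$ on $A$ only forces $M(x)$ to grow like $\delta^{-c}$ on sets of \emph{arbitrarily small} positive measure as $\delta \downarrow 0$, which is compatible with $M$ being $\pi$-a.e. finite but not bounded. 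A proof in full generality would therefore require either restricting (by an Egoroff-type argument) to a level set of $M$ on which the sticky set can be arranged to have positive intersection, or passing through an alternative characterization such as a drift condition for geometric ergodicity.
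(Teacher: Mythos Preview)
Your argument is correct and is essentially the conductance argument the paper cites as one of two possible routes (the paper's own ``proof'' is just two references: either follow the proof of \cite[Theorem~3.1]{roberts-tweedie}, or use the conductance bound of \cite[Theorem~1]{lee-latuszynski}). What you have written is a clean self-contained version of the latter: the indicator of a sticky set of small mass has a Rayleigh quotient close to $1$, so no spectral gap can exist. Your computations are fine.

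You are also right that this route, as written, needs reversibility (to pass from geometric ergodicity to an $L^2$ spectral gap). The paper's other citation, Roberts--Tweedie, is exactly what fills that gap in full generality: their argument proceeds via the drift/minorization characterisation of geometric ergodicity (equivalently, exponential moments of return times to a small set), and the sticky sets $A$ directly obstruct any such drift condition, with no reversibility needed. So your proposal matches one of the two approaches the paper points to, covers every use made of the proposition in Section~\ref{sec:conjectureonboundedness} (where $P_N$ is reversible by Lemma~\ref{lem:selfajoint_positive}), and you have correctly identified both the limitation of your route and the standard way around it.
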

\begin{proof}
The result follows directly by following the proof of~\citep[~Theorem 3.1]{roberts-tweedie},
or by a conductance argument~\citep[~Theorem 1]{lee-latuszynski}.\end{proof}
\begin{prop}
\label{prop:non-geom-easy}Assume that for at least one $t\in[T]$
\begin{equation}
\pi\text{-}{\rm ess}\sup_{x}\,\mathbb{E}_{{\bf 1},x}^{N}\bigg[\frac{G_{t}(x_{t})}{\sum_{k=1}^{N}G_{t}(Z_{t}^{k})}\bigg]=1.\label{eq:suff}
\end{equation}
Then $P_{N}$ cannot be geometrically ergodic.\end{prop}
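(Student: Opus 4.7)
The plan is to invoke Proposition~\ref{prop:sticky-non-geometric}: given any $\epsilon, \delta > 0$, I construct a set $A \in \mathcal{B}(\mathsf{S})$ with $\pi(A) \in (0, \delta)$ and $\inf_{x \in A} P_N(x, A) \ge 1 - \epsilon$. (If $P_N$ is not ergodic it is \emph{a fortiori} not geometrically ergodic, so we may assume ergodicity.) Writing $\psi(x) := \mathbb{E}_{\mathbf{1}, x}^N\bigl[G_t(x_t)/\sum_{k=1}^N G_t(Z_t^k)\bigr]$ and $A_\eta := \{x \in \mathsf{S} : \psi(x) \ge 1 - \eta\}$, hypothesis~(\ref{eq:suff}) gives $\pi(A_\eta) > 0$ for every $\eta > 0$, so $A_\eta$ is the natural candidate.

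Two structural facts drive the proof. First, the probability under $\mathbb{P}_{\mathbf{1}, x}^N$ that the selected lineage passes through index~$1$ at time~$t$ satisfies $\mathbb{P}_{\mathbf{1}, x}^N(i_t = 1) \ge \psi(x)$: a short backward recursion on $i_s = A_s^{i_{s+1}}$ using the resampling weights $G_t(Z_t^k)/\sum_j G_t(Z_t^j)$ and the deterministic fact $A_t^1 = 1$ gives $\mathbb{P}_{\mathbf{1}, x}^N(i_t = 1 \mid Z) \ge G_t(x_t)/\sum_j G_t(Z_t^j)$, and taking expectations yields the bound. Second, the ``lineage lock'' $A_s^1 = 1$ built into~(\ref{eq:def_P_=00007B1,x=00007D-other-times}), combined with the backward unwinding in~(\ref{eq:def_of_I_i}), forces $i_t = 1 \Rightarrow i_{1:t} = \mathbf{1}$; hence on this event the update $Y = Z_{1:T}^{\mathbf{i}}$ inherits the entire prefix $Y_{1:t} = x_{1:t}$.

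The crux is then the invariance $\psi(Y) = \psi(x)$ whenever $Y_{1:t} = x_{1:t}$. Under $\mathbb{P}_{\mathbf{1}, y}^N$ the joint law of $Z_t^{1:N}$ is built recursively from the constraints $Z_s^1 = y_s$, the mutations $M_s$, and the ancestor weights from $G_{s-1}$, for $s \le t$ only; none of these references $y_{t+1:T}$. Consequently this law depends on $y$ only through $y_{1:t}$, so coincides with its counterpart under $\mathbb{P}_{\mathbf{1}, x}^N$, yielding $\psi(Y) = \psi(x) \ge 1 - \eta$ on the prefix-locked event. Combining,
\[
P_N(x, A_\eta) \ge \mathbb{P}_{\mathbf{1}, x}^N\bigl(i_t = 1,\; Y \in A_\eta\bigr) = \mathbb{P}_{\mathbf{1}, x}^N(i_t = 1) \ge \psi(x) \ge 1 - \eta, \qquad x \in A_\eta.
\]

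Finally, since $A_\eta \downarrow \{\psi = 1\}$ as $\eta \downarrow 0$, in the generic case $\pi(\{\psi = 1\}) = 0$ any sufficiently small $\eta$ delivers $\pi(A_\eta) \in (0, \delta)$ and Proposition~\ref{prop:sticky-non-geometric} concludes. If $\pi(\{\psi = 1\}) > 0$, then $\{\psi = 1\}$ is $P_N$-absorbing, and either $\pi(\{\psi = 1\}) \in (0, 1)$ contradicts irreducibility (by reversibility) or $\pi(\{\psi = 1\}) = 1$ forces the chain to deterministically freeze its prefix $Y_{1:t}$, incompatible with ergodicity under the standing assumption that $\pi_1$ is not concentrated on a point. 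The only non-routine step is the prefix-invariance $\psi(Y) = \psi(x)$; the rest is bookkeeping based on structural properties of the conditional SMC construction.
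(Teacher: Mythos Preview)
Your overall architecture---invoke Proposition~\ref{prop:sticky-non-geometric} via the level sets $A_\eta=\{\psi\ge 1-\eta\}$ and exploit that $\psi$ depends only on $x_{1:t}$---is sound and essentially the paper's route. But the heart of your argument, the inequality $\mathbb{P}_{\mathbf{1},x}^{N}(i_t=1)\ge\psi(x)$, is false for $N\ge 3$, and the ``short backward recursion'' you sketch does not establish it.

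The problem is a hidden dependence. You implicitly treat $A_t^{i_{t+1}}$ as if, given $i_{t+1}=k\ne 1$ and $Z_t$, it were distributed with the raw resampling weights $G_t(Z_t^{\cdot})/\sum_j G_t(Z_t^j)$. It is not: the index $i_{t+1}$ is a function of $(A_{t+1:T},Z_{t+1:T})$, which depends on $A_t$ through $Z_{t+1}$, so conditioning on $i_{t+1}$ biases the law of $A_t^{i_{t+1}}$. (Equivalently, if one conditions on $Z_{1:T}$ rather than $Z_{1:t}$, the ancestor law becomes the posterior $\propto G_t(Z_t^{\cdot})\,M_{t+1}(Z_t^{\cdot},{\rm d}Z_{t+1}^{i})$, not the raw weights.) A concrete counterexample: take $N=3$, $T=2$, $t=1$, $\mathsf{Z}=\{a,b,c\}$, $x=(a,c)$, $M_1=\delta_b$, $M_2(a,\cdot)=\delta_c$, $M_2(b,\cdot)=\delta_a$, $G_1(a)=10$, $G_1(b)=1$, $G_2(c)=1$, $G_2(a)=100$. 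Then $\psi(x)=5/6\approx 0.833$, whereas a direct enumeration of the four configurations of $(A_1^2,A_1^3)$ gives $\mathbb{P}_{\mathbf{1},x}^{N}(i_1=1)\approx 0.700$. The mechanism is clear: offspring of $x_1$ receive small $G_2$-weight, so the final selection systematically picks a particle whose time-$1$ ancestor is \emph{not} $1$.

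The fix is exactly the paper's: replace the event $\{i_t=1\}$ by the smaller event $\{A_t^{1:N}=\mathbf{1}\}$, for which the bound is elementary. Since $\mathbb{P}_{\mathbf{1},x}^{N}(A_t^{i}=1)=\psi(x)$ for each $i\ne 1$ (here the raw weights are correct, as no conditioning on the future is involved), a union bound gives
\[
\mathbb{P}_{\mathbf{1},x}^{N}\bigl(A_t^{1:N}=\mathbf{1}\bigr)\ \ge\ 1-(N-1)\bigl(1-\psi(x)\bigr),
\]
and on this event $i_t=1$ regardless of $i_{t+1}$, so $Y_{1:t}=x_{1:t}$ and hence $Y\in A_\eta$. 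This yields $\inf_{x\in A_\eta}P_N(x,A_\eta)\ge 1-(N-1)\eta$, which is all Proposition~\ref{prop:sticky-non-geometric} needs. Your prefix-invariance $\psi(Y)=\psi(x)$ and your treatment of the degenerate case $\pi(\{\psi=1\})>0$ are fine; only the lineage bound has to be replaced.
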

\begin{proof}
Because of Proposition~\ref{prop:sticky-non-geometric} it suffices
to establish that 
\begin{equation}
\pi_{1:t}\text{-}{\rm ess}\sup_{x_{1:t}\in\mathsf{S}}\bigg\{\inf_{x_{t+1:T}}P_{N}\left(x_{1:T};\{x_{1:t}\}\times\mathsf{Z}^{T-t}\right)\bigg\}=1.\label{eq:sticky-simple}
\end{equation}
We note that 
\begin{eqnarray*}
\inf_{x_{t+1:T}}P_{N}\left(x_{1:T},\{x_{1:t}\}\times\mathsf{Z}^{T-t}\right) & \geq & \mathbb{P}_{{\bf 1},x}^{N}\left(A_{t}^{1:N}=1\right)\\
 & \ge & 1-\sum_{i=2}^{N}\mathbb{P}_{{\bf 1},x}^{N}\left(A_{t}^{i}\neq1\right),
\end{eqnarray*}
because $A_{t}^{1}=1$ by construction. We emphasize that $A_{t}^{i}$
are independent of $x_{t+1:T}$. Now (\ref{eq:sticky-simple}) follows
directly from (\ref{eq:suff}) because for $i\in\{2,\ldots,N\}$,
\[
\mathbb{P}_{{\bf 1},x}^{N}\left(A_{t}^{i}=1\right)=\mathbb{E}_{{\bf 1},x}^{N}\left[\frac{G_{t}(x_{t})}{\sum_{j=1}^{N}G_{t}(Z_{t}^{j})}\right].\qedhere
\]
\end{proof}
\begin{lem}
\label{lem:equiv-easy-condition}Assume that for any $\epsilon>0$
\[
\pi\text{-}{\rm ess}\inf_{x}\,\mathbb{P}_{{\bf 1},x}^{N}\bigg(\frac{G_{t}(Z_{t}^{2})}{G_{t}(x_{t})}\ge\epsilon\bigg)=0.
\]
Then, (\ref{eq:suff}) holds.\end{lem}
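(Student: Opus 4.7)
The plan is to show that the essential supremum in (\ref{eq:suff}) is simultaneously bounded above by $1$ (trivially) and below by $1-\delta$ for every $\delta>0$ (using the hypothesis). Since $Z_{t}^{1}=x_{t}$ almost surely under $\mathbb{P}_{\mathbf{1},x}^{N}$, we have $G_{t}(x_{t})\le\sum_{k=1}^{N}G_{t}(Z_{t}^{k})$, so the ratio inside the expectation is bounded by $1$ and the $\pi$-ess sup is at most $1$. The work is in the lower bound.

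Fix $\delta>0$. First I would choose $\epsilon=\epsilon(\delta,N)>0$ small enough that $1/(1+(N-1)\epsilon)\ge 1-\delta/2$, and then choose $\eta=\eta(\delta,N)>0$ small enough that $1-(N-1)\eta\ge 1-\delta/2$. By the hypothesis,
\[
\pi\text{-}{\rm ess}\inf_{x}\mathbb{P}_{\mathbf{1},x}^{N}\!\left(\tfrac{G_{t}(Z_{t}^{2})}{G_{t}(x_{t})}\ge\epsilon\right)=0,
\]
so the set $A_{\epsilon,\eta}:=\{x\in\mathsf{S}:\mathbb{P}_{\mathbf{1},x}^{N}(G_{t}(Z_{t}^{2})\ge\epsilon G_{t}(x_{t}))\le\eta\}$ has $\pi(A_{\epsilon,\eta})>0$, by the definition of essential infimum.

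Next I would use the exchangeability of $Z_{t}^{2},\ldots,Z_{t}^{N}$ under $\mathbb{P}_{\mathbf{1},x}^{N}$, which follows from the cSMC construction in (\ref{eq:def_P_=00007B1,x=00007D-other-times}): the ancestor $a_{t-1}^{1}=1$ is fixed, while $a_{t-1}^{2},\ldots,a_{t-1}^{N}$ are i.i.d.\ from the same weighted mixture, and then $Z_{t}^{i}$ for $i\ge 2$ are drawn independently from the corresponding $M_{t}$. Applying a union bound, for every $x\in A_{\epsilon,\eta}$,
\[
\mathbb{P}_{\mathbf{1},x}^{N}\!\left(\exists i\in\{2,\ldots,N\}:G_{t}(Z_{t}^{i})\ge\epsilon G_{t}(x_{t})\right)\le(N-1)\eta.
\]
On the complement of this event, $\sum_{i=2}^{N}G_{t}(Z_{t}^{i})\le(N-1)\epsilon G_{t}(x_{t})$, and therefore
\[
\frac{G_{t}(x_{t})}{\sum_{k=1}^{N}G_{t}(Z_{t}^{k})}\ge\frac{1}{1+(N-1)\epsilon}\ge 1-\tfrac{\delta}{2}.
\]
Discarding the contribution on the bad event (the integrand is nonnegative) gives
\[
\mathbb{E}_{\mathbf{1},x}^{N}\!\left[\frac{G_{t}(x_{t})}{\sum_{k=1}^{N}G_{t}(Z_{t}^{k})}\right]\ge\bigl(1-\tfrac{\delta}{2}\bigr)\bigl(1-(N-1)\eta\bigr)\ge\bigl(1-\tfrac{\delta}{2}\bigr)^{2}\ge 1-\delta
\]
for every $x\in A_{\epsilon,\eta}$. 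Since $\pi(A_{\epsilon,\eta})>0$, the $\pi$-ess sup is at least $1-\delta$, and letting $\delta\downarrow 0$ yields~(\ref{eq:suff}).

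The only subtle point is a careful use of the definition of essential infimum to produce a set of positive $\pi$-measure on which the required tail probability is small; the rest is exchangeability plus a union bound. I do not anticipate a serious obstacle, although one should keep in mind that the statement involves ess inf and ess sup with respect to $\pi$ on $\mathsf{X}$ (not $\pi_{t}$ on $\mathsf{Z}$), so all ``for almost every $x$'' statements refer to the joint distribution and no projection arguments are required.
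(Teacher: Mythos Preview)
Your proposal is correct and follows essentially the same approach as the paper: extract a set of positive $\pi$-measure from the essential-infimum hypothesis, use exchangeability of $Z_{t}^{2},\ldots,Z_{t}^{N}$ under $\mathbb{P}_{\mathbf{1},x}^{N}$ together with a union bound, and restrict the expectation to the good event to obtain the lower bound $\bigl(1-(N-1)\eta\bigr)/\bigl(1+(N-1)\epsilon\bigr)$. The only cosmetic difference is that the paper works with the event $B=\bigl\{\sum_{k=2}^{N}G_{t}(Z_{t}^{k})\ge(N-1)\epsilon\,G_{t}(x_{t})\bigr\}$ (bounded via pigeonhole by the same union), and lets $\epsilon,\delta\to0$ at the end rather than pre-selecting them as functions of a single $\delta$.
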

\begin{proof}
For any $\epsilon,\delta>0$ there exists $A_{\epsilon,\delta}$ such
that $\pi(A_{\epsilon,\delta})>0$ and for $x\in A_{\epsilon,\delta}$
\[
\mathbb{P}_{{\bf 1},x}^{N}\bigg(\frac{G_{t}(Z_{t}^{2})}{G_{t}(x_{t})}\ge\epsilon\bigg)<\delta.
\]
Because of exchangeability, for any $x$ and $2\le k\le N$, 
\[
\mathbb{P}_{{\bf 1},x}^{N}\bigg(\frac{G_{t}(Z_{t}^{k})}{G_{t}(x_{t})}\ge\epsilon\bigg)=\mathbb{P}_{{\bf 1},x}^{N}\bigg(\frac{G_{t}(Z_{t}^{2})}{G_{t}(x_{t})}\ge\epsilon\bigg).
\]
Denote $B=\big\{\frac{\sum_{k=2}^{N}G_{t}(Z_{t}^{k})}{G_{t}(x_{t})}\ge(N-1)\epsilon\big\}$,
then for $x\in A_{\epsilon,\delta}$ also 
\[
\mathbb{P}_{{\bf 1},x}^{N}(B)\le\sum_{k=2}^{N}\mathbb{P}_{{\bf 1},x}^{N}\bigg(\frac{G_{t}(Z_{t}^{k})}{G_{t}(x_{t})}\ge\epsilon\bigg)<(N-1)\delta.
\]
We may bound for any $x\in A_{\epsilon,\delta}$, 
\begin{align*}
\mathbb{E}_{{\bf 1},x}^{N}\bigg[\frac{G_{t}(x_{t})}{\sum_{k=1}^{N}G_{t}(Z_{t}^{k})}\bigg] & \ge\mathbb{E}_{{\bf 1},x}^{N}\bigg[\mathbb{I}\left\{ B^{\complement}\right\} \frac{G_{t}(x_{t})}{\sum_{k=1}^{N}G_{t}(Z_{t}^{k})}\bigg]\\
 & \ge\mathbb{E}_{{\bf 1},x}^{N}\bigg[\left\{ B^{\complement}\right\} \frac{1}{1+(N-1)\epsilon}\bigg]\\
 & \ge\frac{1-(N-1)\delta}{1+(N-1)\epsilon}.
\end{align*}
Letting $\epsilon,\delta\to0$ completes the proof.\end{proof}
\begin{lem}
\label{lem:full-supp}Assume that there exists $t\in[T]$ such that
$\pi_{t}$-${\rm ess}\sup_{x_{t}}G_{t}(x_{t})=\infty$, and if $t\ge2$,
suppose also that for any $A\in\mathcal{B}(\mathsf{Z}^{1:t-1})$ and
$B\in\mathcal{B}(\mathsf{Z})$, 
\[
\pi_{1:t-1}(A)>0\text{ and }\pi_{t}(B)>0\implies\pi_{1:t}(A\times B)>0.
\]
Then, the assumption of Lemma~\ref{lem:equiv-easy-condition} and
consequently (\ref{eq:suff}) holds for $t$.\end{lem}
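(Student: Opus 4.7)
The plan is to verify the hypothesis of Lemma~\ref{lem:equiv-easy-condition}: for each $\epsilon,\delta>0$ I will exhibit a set $A_{\epsilon,\delta}\subset\mathsf{X}$ of positive $\pi$-measure on which $\mathbb{P}_{\mathbf{1},x}^{N}\bigl(G_{t}(Z_{t}^{2})\geq\epsilon G_{t}(x_{t})\bigr)<\delta$. The pivotal observation, read off from (\ref{eq:def_P_=00007B1,x=00007D-time-1})--(\ref{eq:def_P_=00007B1,x=00007D-other-times}), is that the marginal law of $Z_{t}^{2}$ under $\mathbb{P}_{\mathbf{1},x}^{N}$ depends on $x$ only through $x_{1:t-1}$: the ancestor chain of particle $2$ up to time $t$ involves the weights $G_{s}(\cdot)$ evaluated at the particle cloud and the insertions $Z_{s}^{1}=x_{s}$ for $s<t$, but $x_{t},\ldots,x_{T}$ do not enter. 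Denote this law by $\nu_{x_{1:t-1}}$. Since $G_{t}$ is $[0,\infty)$-valued, $G_{t}(Z_{t}^{2})$ is $\nu_{x_{1:t-1}}$-a.s. finite, so the strategy is: first find $x_{1:t-1}$ on which the upper tail of $G_{t}(Z_{t}^{2})$ under $\nu_{x_{1:t-1}}$ is controlled, then push $G_{t}(x_{t})$ large enough that $\epsilon G_{t}(x_{t})$ lies in that controlled tail.

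For $t=1$, the law $\nu=M_{1}$ does not depend on $x$, so pick $M$ with $M_{1}(G_{1}\geq M)<\delta$ and set $A_{\epsilon,\delta}:=\{x:G_{1}(x_{1})\geq M/\epsilon\}\times\mathsf{Z}^{T-1}$; then $\pi(A_{\epsilon,\delta})>0$ by the assumption $\pi_{1}$-${\rm ess}\sup_{x_{1}}G_{1}(x_{1})=\infty$, and the bound on the conditional probability follows immediately from monotonicity of the tail.

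For $t\geq 2$, define the measurable function $M(x_{1:t-1}):=\inf\{m\geq 0:\nu_{x_{1:t-1}}(G_{t}\geq m)<\delta\}$, which is finite $\pi_{1:t-1}$-a.s. by the remarks above. Monotone continuity of $\pi_{1:t-1}$ along the sets $\{M\leq n\}$ yields some integer $n$ with $A:=\{x_{1:t-1}:M(x_{1:t-1})\leq n\}$ satisfying $\pi_{1:t-1}(A)>0$. Set $B:=\{x_{t}:G_{t}(x_{t})\geq n/\epsilon\}$; then $\pi_{t}(B)>0$ by the unboundedness assumption on $G_{t}$. The product assumption of the lemma gives $\pi_{1:t}(A\times B)>0$, so $A_{\epsilon,\delta}:=A\times B\times\mathsf{Z}^{T-t}$ has $\pi(A_{\epsilon,\delta})>0$. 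For $x\in A_{\epsilon,\delta}$ we have $\epsilon G_{t}(x_{t})\geq n\geq M(x_{1:t-1})$, so by monotonicity
\[
\mathbb{P}_{\mathbf{1},x}^{N}\bigl(G_{t}(Z_{t}^{2})\geq\epsilon G_{t}(x_{t})\bigr)=\nu_{x_{1:t-1}}\bigl(G_{t}\geq\epsilon G_{t}(x_{t})\bigr)\leq\nu_{x_{1:t-1}}(G_{t}\geq n)<\delta,
\]
as required.

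The main technical obstacle is more bookkeeping than analysis: one must justify that the marginal law of $Z_{t}^{2}$ is a function of $x_{1:t-1}$ alone and that $x_{1:t-1}\mapsto M(x_{1:t-1})$ is measurable. Both follow from standard arguments using the explicit transition densities in (\ref{eq:def_P_=00007B1,x=00007D-other-times}): the marginal in question is obtained by integrating out $Z_{s}^{j}$ for $j\neq 1$ and $s<t$ against kernels that do not reference $x_{s}$ for $s\geq t$, while measurability of $M(\cdot,\delta)$ reduces to measurability of the map $(x_{1:t-1},m)\mapsto\nu_{x_{1:t-1}}(G_{t}\geq m)$, which is immediate from Fubini. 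The rest of the argument is the measure-theoretic combination of unboundedness of $G_{t}$ with the product assumption, as outlined above.
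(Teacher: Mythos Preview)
Your argument is correct and follows the same overall strategy as the paper: observe that the law of $Z_t^2$ under $\mathbb{P}_{\mathbf{1},x}^N$ depends only on $x_{1:t-1}$, produce a set $A$ of positive $\pi_{1:t-1}$-measure on which the tail of $G_t(Z_t^2)$ is controlled uniformly, and then combine with a set where $G_t(x_t)$ is large via the product hypothesis. The paper carries out the second step by invoking a separate tightness result (Lemma~\ref{lem:tightness} in the supplement), which yields a fixed $A$ with $\pi_{1:t-1}(A)\geq 1/2$ on which the family $\{\mu_{x_{1:t-1}}\}$ is tight; you instead take a sublevel set of the $\delta$-quantile function $M(\cdot)$ directly. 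Your route is slightly more elementary and self-contained, at the cost of letting $A$ depend on $\delta$, which is immaterial for the conclusion.

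One small point of care: with your definitions, by left-continuity of $m\mapsto\nu_{x_{1:t-1}}(G_t\geq m)$ the set $\{m:\nu_{x_{1:t-1}}(G_t\geq m)<\delta\}$ is the open ray $(M(x_{1:t-1}),\infty)$, so at the boundary $M(x_{1:t-1})=n=\epsilon G_t(x_t)$ the final strict inequality $\nu_{x_{1:t-1}}(G_t\geq n)<\delta$ need not hold. This is harmless: take $B:=\{x_t:G_t(x_t)>n/\epsilon\}$ with a strict inequality (still of positive $\pi_t$-measure by the unboundedness assumption), so that $\epsilon G_t(x_t)>n\geq M(x_{1:t-1})$ and the displayed chain goes through.
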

\begin{proof}
Assume that $t\in\{2,\ldots,T\}$, and for any $x_{1:t-1}\in\mathsf{Z}^{t-1}$
let $\mu_{x_{1:t-1}}$ denote the distribution of $G_{t}(Z_{t}^{2})$
under $\mathbb{P}_{{\bf 1},x_{1:t-1}}^{N}$. By~\citep[Lemma~\ref{lem:tightness}]{andrieu2015uniformsupplement},
there exists $A\in\mathcal{B}(\mathsf{Z}^{t-1})$ such that $\pi_{1:t-1}(A)\ge1/2$
and the family $\{\mu_{x_{1:t-1}}\}_{x_{1:t-1}\in A}$ is tight. Therefore,
for any $\epsilon,\delta>0$ there exists $\bar{G}_{t}<\infty$ such
that $\mathbb{P}_{{\bf 1},x}^{N}(G_{t}(Z_{t}^{2})/\bar{G}_{t}\ge\epsilon)<\delta$
for all $x_{1:t-1}\in A$. Because $\pi_{t}$-${\rm ess}\sup_{x_{t}}G_{t}(x_{t})=\infty$,
the set $A\times\{x_{t}\,:\,G_{t}(x_{t})\ge\bar{G}_{t}\}\times Z^{T-t-1}$
is of positive $\pi$-measure. The case $t=1$ follows similarly because
the distribution of $G_{1}(Z_{1}^{2})$ is independent of $x$.\end{proof}
\begin{rem}
\label{rem:strictly_positive}An immediate implication of Propositions~\ref{prop:non-geom-easy}
and~\ref{prop:boundepsilonNwithuniformboundG} and Lemma~\ref{lem:full-supp}
is that if $\pi$ is equivalent to a Lebesgue or counting measure
on $\mathsf{X}$ then $P_{N}$ is geometrically ergodic for any $N\geq2$
if and only if $\pi_{t}$-${\rm ess}\sup_{x_{t}}G_{t}(x_{t})<\infty$
for all $t\in[T]$. This covers many applications in statistics, where
often the potentials $G_{t}$ are strictly positive and for any $x_{t}\in\mathsf{Z}$,
the Markov kernel $M_{t}(x_{t},\cdot)$ is equivalent to a Lebesgue
or counting measure on $\mathsf{Z}$.
\end{rem}
Proposition~\ref{prop:non-geom-easy} does not characterize all situations
in which $P_{N}$ fails to be geometrically ergodic. Indeed, in the
following example (\ref{eq:suff}) does not hold, and $P_{N}$ still
fails to be geometrically ergodic.
\begin{example*}
\label{ex:not-satisfying}Let $\mathsf{Z}=\mathbb{N}$, $T=2$, $G_{1}(z)\equiv1$
and $M_{1}(z_{1})$ be any probability distribution supported on $\mathbb{N}$
(e.g.,~a Poisson distribution). Define $M_{2}(z_{1},z_{2})=\frac{1}{2}\delta_{2z_{1}}(z_{2})+\frac{1}{2}\delta_{2z_{1}+1}(z_{2})$
and $G_{2}(z_{2})=z_{2}$. It is not difficult to see that this example
does not satisfy (\ref{eq:suff}), but $\pi_{2}$-${\rm ess}\sup_{z_{2}}G_{2}(z_{2})=\infty$.
It is easy to observe as well that the sets $A_{n}:=\{(n,2n),(n,2n+1)\}$
satisfy $\pi(A_{n})>0$ and that $\inf_{x\in A}P_{N}(x,A_{n})\ge1-\delta_{n}$
where $\delta_{n}\to0$ as $n\to\infty$. 
\end{example*}
Our findings above suggest that the essential boundedness of the potentials
could in fact be a necessary condition for geometric ergodicity. We
have considered also various other examples, and it seems that in
any specific scenario it is easy to identify ``sticky'' sets and
conclude by Lemma~\ref{prop:sticky-non-geometric}. However, we have
yet to identify such sets in general, and so have resorted to stating
the following.
\begin{conjecture}
\label{conj:general-necessity}Suppose $\pi_{t}$-${\rm ess}\sup_{x_{t}}G_{t}(x_{t})=\infty$
for some $t\in[T]$. Then, the i-cSMC kernel is not geometrically
ergodic for any $N\in\mathbb{N}$.
\end{conjecture}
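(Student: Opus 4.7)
The plan is to apply Proposition~\ref{prop:sticky-non-geometric}: it suffices to show that for any $\epsilon,\delta>0$ there exists $A\in\mathcal{B}(\mathsf{X})$ with $\pi(A)\in(0,\delta)$ and $\inf_{x\in A}P_N(x,A)\ge 1-\epsilon$. The only resource on offer is that $\pi_t\text{-}{\rm ess\,sup}_{x_t}G_t(x_t)=\infty$, so the natural candidate is a set of paths on which $G_t(x_t)$ is so large that the normalized weight at time $t$ of the conditioned particle overwhelms the weights of the other $N-1$ particles, which forces the resampled descendant at time $t$ (and hence the whole output trajectory at coordinate $t$) to coincide with $x_t$ with probability close to $1$. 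Once we have stickiness at coordinate $t$, the transition contracts on a set which is arbitrarily small in $\pi$-measure, and geometric ergodicity is ruled out.

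Concretely I would first pick $\bar G$ so large that $B_t:=\{G_t\ge\bar G\}$ has small $\pi_t$-measure, then, following the ideas behind Lemma~\ref{lem:full-supp} via the tightness lemma \citep[Lemma~\ref{lem:tightness}]{andrieu2015uniformsupplement}, extract a set $A_{1:t-1}\subseteq\mathsf{Z}^{t-1}$ of large $\pi_{1:t-1}$-mass on which the family of laws of $\bigl(G_t(Z_t^2),\ldots,G_t(Z_t^N)\bigr)$ under $\mathbb{P}_{\mathbf{1},x_{1:t-1}}^N$ is tight. On the product $A:=A_{1:t-1}\times B_t\times\mathsf{Z}^{T-t}$, the renormalised weight at time $t$ attached to the conditioned particle is at least $1-\epsilon'$ with probability at least $1-\epsilon'$, and combining this with the corresponding argument at times $s\ne t$ (using the ancestral-tree bound in the proof of Lemma~\ref{lem:choose_c1c2} to show that ancestry $1$ at time $t$ is preserved forwards and backwards with high probability) one hopes to conclude $\inf_{x\in A}P_N(x,A)\ge 1-\epsilon$.

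The critical obstacle, which is what keeps the statement a conjecture, is that this product construction requires $\pi$ to carry enough mass on $A_{1:t-1}\times B_t$, i.e.\ some form of joint support property between the past and the $t$-th marginal. Without such a property, the kernels $M_s$ can conspire to couple the ancestries so tightly that $G_t(Z_t^i)$ inherits the largeness of $G_t(x_t)$ along the very paths on which $G_t(x_t)$ blows up: the example following the conjecture is exactly of this form, with $Z_2^i$ forced onto $\{2Z_1^i,2Z_1^i+1\}$. One can still produce sticky sets by hand in that example, but a general argument appears to require trading the simple tightness step for a two-scale construction that exploits the ancestral tree. A plausible route is to run the ancestral-tree sampling of Lemma~\ref{lem:choose_c1c2} and then argue, by a compactness/exhaustion argument along $\bar G\to\infty$, that one can always isolate a level set of some functional of the trajectory on which the conditioned particle dominates, but making this argument both uniform in $N$ and free of additional support assumptions is precisely the step I expect to be hard.
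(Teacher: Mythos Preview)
The statement you are asked to prove is labelled a \emph{Conjecture} in the paper, and the paper does not supply a proof: immediately before stating it the authors write that ``in any specific scenario it is easy to identify `sticky' sets and conclude by Lemma~\ref{prop:sticky-non-geometric}. However, we have yet to identify such sets in general, and so have resorted to stating the following.'' There is therefore no proof in the paper against which to compare your attempt.

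Your proposal is not a proof either, and you are explicit about this. What you have written is a faithful account of the natural strategy --- construct sticky sets of small $\pi$-measure via Proposition~\ref{prop:sticky-non-geometric}, using largeness of $G_t$ on a level set together with tightness of the laws of the other particles' potentials --- and a correct diagnosis of why it stalls: the product-set construction $A_{1:t-1}\times B_t\times\mathsf{Z}^{T-t}$ needs $\pi_{1:t}(A_{1:t-1}\times B_t)>0$, which is exactly the support hypothesis of Lemma~\ref{lem:full-supp} and is precisely what fails in the paper's example (where $M_2$ forces $Z_2^i\in\{2Z_1^i,2Z_1^i+1\}$, so the other particles' potentials at time $2$ are comparable to $G_2(x_2)$ on the very paths where $G_2(x_2)$ is large). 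Your observation that one can still exhibit sticky sets \emph{by hand} in that example, but that a uniform construction free of support assumptions is the missing ingredient, is exactly the authors' own assessment.

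In short: your write-up is an accurate summary of the state of the problem and of the obstruction, and it agrees with the paper's discussion; but neither you nor the paper has a proof, and the open step you flag --- replacing the product-set/tightness argument by something that survives tight coupling between $Z_{1:t-1}$ and $G_t(Z_t^i)$ --- is genuinely the gap.
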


\section{\label{sec:The-particle-Gibbs}The particle Gibbs sampler}

In numerous situations of practical interest one is interested in
sampling from a probability distribution $\pi\bigl({\rm d}\theta\times{\rm d}x\bigr)$
defined on some measurable space $\bigl(\Theta\times\mathsf{X},\mathcal{B}(\Theta)\times\mathsf{\mathcal{B}(X)}\bigr)$
for which direct sampling is difficult, but sampling from the associated
conditional probability distributions $\pi_{\theta}({\rm d}x)$ and
$\pi_{x}({\rm d}\theta)$ for any $(\theta,x)\in\Theta\times\mathsf{X}$
turns out to be easier. In fact when sampling exactly from these conditionals
is possible one can define the two stage Gibbs sampler~\citep{robert1999monte}
which alternately samples from these conditional distributions. More
precisely, let us define, for any $(\theta,x)\in\Theta\times\mathsf{X}$
and $S\in\mathcal{B}(\Theta)\times\mathsf{\mathcal{B}(X)}$,
\begin{equation}
{\rm \Gamma}\bigl(\theta,x;S\bigr):=\int_{S}\pi_{x}\bigl({\rm d}\vartheta\bigr)\pi_{\vartheta}({\rm d}y)\quad.\label{eq:defGibbs}
\end{equation}
This can be interpreted as a Markov transition probability, and is
precisely the Markov kernel underpinning the standard two stage Gibbs
sampler. The corresponding Markov chain $\{(\theta_{i},X_{i}),i\geq0\}$
on $\Theta\times\mathsf{X}$ leaves $\pi$ invariant and is ergodic
under fairly general and natural conditions. In fact it can be shown
that $\{X_{i},i\geq0\}$ and $\{\theta_{i},i\geq0\}$ are themselves
Markov chains leaving the marginals $\pi\bigl({\rm d}x\bigr)$ and
$\pi({\rm d}\theta)$ invariant respectively. For reasons which will
appear clearer below, we define for any $(x_{0},S)\in\mathsf{X}\times\mathcal{B}\bigl(\mathsf{X}\bigr)$
the Markov transition probability $\Gamma_{x}\bigl(x_{0},S\bigr):=\Gamma\bigl(x_{0},\Theta\times S\bigr)$
corresponding to the Markov chain $\{X_{i},i\geq0\}$ (we point out
that the index $x$ in this notation is a name, not a variable). In
some situations, however, while sampling from the conditional distribution
$\pi_{x}\bigl({\rm d}\theta\bigr)$ may be routine, sampling from
$\pi_{\theta}({\rm d}x)$ may be difficult and this step is instead
replaced by a Markov transition probability $\Pi_{\theta}(x,{\rm d}y)$
leaving $\pi_{\theta}({\rm d}x)$ invariant for any $\theta\in\Theta$.
The resulting algorithm, whose transition kernel $\Phi$ is given
below, is often referred to as ``Metropolis-within-Gibbs'' in the
common situation where $\Pi_{\theta}$ is a Metropolis--Hastings transition
kernel---we will however use this name in order to refer to the general
scenario. In the particular situation where $\Pi_{\theta}$ is a cSMC
transition kernel the resulting algorithm is known as the particle
Gibbs (PGibbs) sampler~\citep{andrieu-doucet-holenstein}. We note
that in the general scenario, for any $(\theta_{0},x,S)\in\Theta\times\mathsf{X}\times\big(\mathcal{B}(\Theta)\times\mathsf{\mathcal{B}(X)}\big)$
\begin{align}
\Phi(x,S)=\Phi(\theta_{0},x;S): & =\int_{S}\pi_{x}\bigl({\rm d}\theta\bigr)\Pi_{\theta}(x,{\rm d}y)\quad.\label{eq:def:met-within-gibbs}
\end{align}
Similarly to above one can show that $\{X_{i},i\geq1\}$ defines a
Markov chain, with transition kernel, for $(x_{0},S)\in\mathsf{X}\times\mathcal{B}\bigl(\mathsf{X}\bigr)$,
$\Phi_{x}(x_{0},S):=\Phi(x_{0},\Theta\times S)$ which is $\pi\bigl({\rm d}x\bigr)-$reversible,
and positive as soon as $\Pi_{\theta}$ defines a positive operator
for any $\theta\in\Theta$. Indeed since for any $f,g\in L^{2}\bigl(\mathsf{X},\pi\bigr)$,
\begin{align*}
\int_{\mathsf{X}}f(x)\pi({\rm d}x)\int_{\Theta\times\mathsf{X}}\pi_{x}\bigl({\rm d}\theta\bigr)\Pi_{\theta}(x,{\rm d}y)g(y) & =\int_{\Theta}\pi({\rm d}\theta)\int_{\mathsf{X}^{2}}f(x)g(y)\pi_{\theta}\bigl({\rm d}x\bigr)\Pi_{\theta}(x,{\rm d}y)\\
 & =\int_{\Theta}\pi({\rm d}\theta)\int_{\mathsf{X}^{2}}f(x)g(y)\pi_{\theta}\bigl({\rm d}y\bigr)\Pi_{\theta}(y,{\rm d}x)\quad,
\end{align*}
we deduce the reversibility from the choice $f(x)=\mathbb{I}\{x\in S_{1}\}$
and $g(x)=\mathbb{I}\{x\in S_{2}\}$ for $S_{1},S_{2}\in\mathcal{B}\bigl(\mathsf{X}\bigr)$
and the positivity by letting $g=f$. This motivates the following
simple result, which again draws on the standard Hilbert space techniques
outlined in~\citep[Appendix~\ref{sec:Supplementary-material-for_minorization}]{andrieu2015uniformsupplement},
and is to the best of our knowledge not available in the literature.
We naturally remark that $\Gamma$ is a particular instance of $\Phi$
corresponding to the case where for any $(\theta,x)\in\Theta\times\mathsf{X}$,
$\Pi_{\theta}(x,\cdot)=\pi_{\theta}(\cdot)$, therefore also implying
that $\Gamma_{x}$ is self-adjoint. Our first result, Theorem~\ref{thm:generalMWGresult},
takes advantage of the fact that $\Gamma_{x}$ is reversible, and
therefore focuses on the asymptotic variance of functions $f\in L^{2}\bigl(\mathsf{X},\pi\big)$.
Corollary~\ref{rem:pgibbs_geometric_result} follows from this result,
providing a sufficient condition for geometric ergodicity of the PGibbs
Markov chain. Our second result, Theorem~\ref{thm:MWG-functions-of-theta},
focuses on functions $g\in L^{2}\bigl(\Theta,\pi\bigr)$, but the
same technique is not directly applicable in this scenario. Some
of our results concern Dirichlet forms: for a generic $\mu$-reversible
Markov kernel and a function $f\in L^{2}(\mathsf{E},\mu)$ we define
the Dirichlet form $\mathcal{E}_{\Pi}(f):=\left\langle f,(I-\Pi)f\right\rangle _{\mu}$.

\begin{thm}
\label{thm:generalMWGresult}Let $\pi$ be a probability distribution
defined on $\bigl(\Theta\times\mathsf{X},\mathcal{B}(\Theta)\times\mathcal{B}(\mathsf{X)}\bigr)$
and let $\left\{ \Pi_{\theta},\theta\in\Theta\right\} $ be a family
of Markov transition probabilities $\left\{ \Pi_{\theta},\theta\in\Theta\right\} $
such that for any $\theta\in\Theta$ the Markov kernel $\Pi_{\theta}$
is reversible with respect to $\pi_{\theta}$, and let $\Gamma$ and
$\Phi$ be as in (\ref{eq:defGibbs}) and (\ref{eq:def:met-within-gibbs}).
Define
\begin{align}
\varrho: & =\inf_{f\in L^{2}\bigl(\mathsf{X},\pi\bigr)}\frac{\int_{\Theta}\pi({\rm d}\theta){\rm var}_{\pi_{\theta}}\left(f\right){\rm Gap}\bigl(\Pi_{\theta}\bigr)}{\int_{\Theta}\pi({\rm d}\theta){\rm var}_{\pi_{\theta}}\left(f\right)}\quad.\label{eq:varrho}
\end{align}
 Then, for any $f\in L^{2}(\mathsf{X},\pi)$ we have the following
inequalities,
\begin{enumerate}[label=(\alph*)]
\item \label{item:dirichlet-order}for the Dirichlet forms,\textup{
\begin{align*}
2\mathcal{E}_{{\rm \Gamma}_{x}}(f)\geq\mathcal{E}_{\Phi_{x}}(f) & \geq\varrho\times\mathcal{E}_{\Gamma_{x}}(f)\quad,
\end{align*}
}
\item for the right spectral gaps\label{item:gap-order}
\[
2{\rm Gap}\left({\rm \Gamma}_{x}\right)\geq{\rm Gap}\bigl(\Phi_{x}\bigr)\geq\varrho\times{\rm Gap}\left(\Gamma_{x}\right)\quad,
\]
 
\item if the asymptotic variances,\label{item:var-order}
\[
0\leq\frac{{\rm var}\bigl(f,\Gamma_{x}\bigr)-{\rm var}_{\pi}(f)}{2}\leq{\rm var}\bigl(f,\Phi_{x}\bigr)\leq(\varrho^{-1}-1){\rm var}_{\pi}(f)+{\rm \varrho^{-1}}{\rm var}\left(f,\Gamma_{x}\right)\quad,
\]
where the latter inequality holds for $\varrho>0.$
\item In addition if\label{item:uniform-pos}

\begin{enumerate}
\item [(i)]\label{item:uniform-order}there exist $\epsilon>0$ such that
for all $\theta\in\Theta$ and all $(x,B)\in\mathsf{X}\times\mathcal{B}\big(\mathsf{X}\big)$,
the minorisation inequality $\Pi_{\theta}\big(x,B\big)\geq\epsilon\pi_{\theta}\big(B\big)$
holds, then for any $f\in L^{2}(\mathsf{X},\pi)$
\[
\frac{{\rm var}\bigl(f,\Gamma_{x}\bigr)-(1-\epsilon){\rm var}_{\pi}(f)}{(2-\epsilon)}\leq{\rm var}\bigl(f,\Phi_{x}\bigr)\;,
\]

\item [(ii)]\label{item:lowerboundvarwithPipositive}for all $\theta\in\Theta$,
$\Pi_{\theta}$ is a positive operator then for any $f\in L^{2}(\mathsf{X},\pi)$
\[
{\rm var}\bigl(f,\Gamma_{x}\bigr)\leq{\rm var}\bigl(f,\Phi_{x}\bigr)\;.
\]

\end{enumerate}
\end{enumerate}
\end{thm}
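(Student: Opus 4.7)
The proof hinges on the pair of identities, obtained by applying the swap $\pi({\rm d}x)\pi_x({\rm d}\theta)=\pi({\rm d}\theta)\pi_\theta({\rm d}x)$ and the $\pi_\theta$-reversibility of $\Pi_\theta$ to the inner products $\langle f,\Gamma_x f\rangle_\pi$ and $\langle f,\Phi_x f\rangle_\pi$:
\[
\mathcal{E}_{\Gamma_x}(f)=\int_\Theta\pi({\rm d}\theta)\,{\rm var}_{\pi_\theta}(f),\qquad \mathcal{E}_{\Phi_x}(f)=\int_\Theta\pi({\rm d}\theta)\,\mathcal{E}_{\Pi_\theta}(f).
\]
All subsequent assertions reduce to a pointwise (in $\theta$) comparison of $\mathcal{E}_{\Pi_\theta}(f)$ with ${\rm var}_{\pi_\theta}(f)$, followed by integration.

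For part \ref{item:dirichlet-order}, since the spectrum of any $\pi_\theta$-reversible Markov kernel lies in $[-1,1]$ we have $\mathcal{E}_{\Pi_\theta}(f)\leq 2{\rm var}_{\pi_\theta}(f)$, which integrates to the upper bound. The lower bound follows from the variational characterisation $\mathcal{E}_{\Pi_\theta}(f)\geq{\rm Gap}(\Pi_\theta)\,{\rm var}_{\pi_\theta}(f)$ together with the defining inequality of $\varrho$ applied to $f$. Part \ref{item:gap-order} is immediate from \ref{item:dirichlet-order} on dividing by ${\rm var}_\pi(f)$ and taking the appropriate infima.

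Part \ref{item:var-order} is obtained by feeding the Dirichlet form bounds of \ref{item:dirichlet-order} into the generic Hilbert-space asymptotic variance comparison collated in Proposition~\ref{prop:boundsspectralgapandvarianceforP_N} of the supplement. The relevant statement is that whenever $P_1,P_2$ are $\pi$-reversible with $\mathcal{E}_{P_1}(f)\leq c\,\mathcal{E}_{P_2}(f)$, operator monotonicity of inversion applied to the non-negative self-adjoint operators $I-P_i$ on $L^2(\mathsf{X},\pi)\ominus\{1\}$ yields
\[
{\rm var}(f,P_1)+{\rm var}_\pi(f)\geq c^{-1}\bigl\{{\rm var}(f,P_2)+{\rm var}_\pi(f)\bigr\}.
\]
Taking $(P_1,P_2,c)=(\Phi_x,\Gamma_x,2)$ delivers the middle lower bound on ${\rm var}(f,\Phi_x)$; swapping the roles to $(P_1,P_2,c)=(\Gamma_x,\Phi_x,\varrho^{-1})$ delivers the upper bound after rearrangement. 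The leftmost inequality ${\rm var}(f,\Gamma_x)\geq{\rm var}_\pi(f)$ uses that $\langle f,\Gamma_x f\rangle_\pi=\int\pi({\rm d}\theta)\pi_\theta(f)^2\geq 0$, so $\Gamma_x$ is a positive operator and its non-negative autocovariances sum to a quantity at least ${\rm var}_\pi(f)$.

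For \ref{item:uniform-pos}(i), the uniform minorisation $\Pi_\theta\geq\epsilon\pi_\theta$ admits the Nummelin-type splitting $\Pi_\theta=\epsilon\pi_\theta+(1-\epsilon)Q_\theta$ for a $\pi_\theta$-reversible kernel $Q_\theta$, so
\[
\mathcal{E}_{\Pi_\theta}(f)=\epsilon\,{\rm var}_{\pi_\theta}(f)+(1-\epsilon)\mathcal{E}_{Q_\theta}(f)\leq(2-\epsilon)\,{\rm var}_{\pi_\theta}(f),
\]
sharpening to $\mathcal{E}_{\Phi_x}(f)\leq(2-\epsilon)\mathcal{E}_{\Gamma_x}(f)$ after integration; invoking the variance comparison with $c=2-\epsilon$ and rearranging produces the claimed refined lower bound. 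For \ref{item:uniform-pos}(ii), positivity of each $\Pi_\theta$ gives $\langle f-\pi_\theta(f),\Pi_\theta(f-\pi_\theta(f))\rangle_{\pi_\theta}\geq 0$, hence $\mathcal{E}_{\Pi_\theta}(f)\leq{\rm var}_{\pi_\theta}(f)$ and integration yields $\mathcal{E}_{\Phi_x}(f)\leq\mathcal{E}_{\Gamma_x}(f)$; the variance comparison with $c=1$ then gives ${\rm var}(f,\Phi_x)\geq{\rm var}(f,\Gamma_x)$. The only genuinely nontrivial input is the invocation of the supplementary variance comparison Proposition; the rest is the elementary conditioning computation that produced the two identities at the outset.
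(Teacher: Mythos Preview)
Your proof is correct and follows essentially the same route as the paper: the same two key identities $\mathcal{E}_{\Gamma_x}(f)=\int\pi({\rm d}\theta)\,{\rm var}_{\pi_\theta}(f)$ and $\mathcal{E}_{\Phi_x}(f)=\int\pi({\rm d}\theta)\,\mathcal{E}_{\Pi_\theta}(f)$, the same pointwise spectral bounds on $\mathcal{E}_{\Pi_\theta}(f)$, and the same appeal to the supplementary Dirichlet-to-variance comparison (though note the result you want is Lemma~\ref{lem:minorizationdirichletboundgapvariance}, not Proposition~\ref{prop:boundsspectralgapandvarianceforP_N}, and the paper proves it via the variational formula ${\rm var}(f,\Pi)=2\sup_g\{2\langle f,g\rangle_\pi-\mathcal{E}_\Pi(g)\}-\|f\|_\pi^2$ rather than operator monotonicity of inversion). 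The only point you gloss over that the paper makes explicit is that ${\rm var}_{\pi_\theta}(f)<\infty$ for $\pi$-a.e.\ $\theta$, which is needed for the ratio $\mathcal{E}_{\Pi_\theta}(f)/{\rm var}_{\pi_\theta}(f)$ to make sense and follows from the variance decomposition.
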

\begin{proof}
We prove the first point. Without loss of generality we consider any
$f\in L_{0}^{2}\bigl(\mathsf{X},\pi\bigr)$ and notice that 
\[
\mathcal{E}_{\Phi_{x}}(f)=\int_{\Theta}\pi({\rm d}\theta)\mathcal{E}_{\Pi_{\theta}}\bigl(f\bigr)\quad,
\]
since
\begin{align*}
\int_{\Theta\times\mathsf{X}^{2}}\pi({\rm d}x)\pi_{x}({\rm d}\theta)\Pi_{\theta}(x,{\rm d}y)\left[f(x)-f(y)\right]^{2} & =\int_{\Theta}\pi({\rm d}\theta)\int_{\mathsf{X}^{2}}\pi_{\theta}({\rm d}x)\Pi_{\theta}(x,{\rm d}y)\left[f(x)-f(y)\right]^{2}\quad.
\end{align*}
Now using that $\mathcal{E}_{\Gamma_{x}}(f)=\frac{1}{2}\int_{\Theta\times\mathsf{X}^{2}}\pi({\rm d}x)\pi_{x}({\rm d}\theta)\pi_{\theta}({\rm d}y)\left[f(x)-f(y)\right]^{2}=\int_{\Theta}\pi({\rm d}\theta){\rm var}_{\pi_{\theta}}\left(f\right)$
and letting $\bar{f}_{\theta}:=f-\pi_{\theta}(f)$ for any $\theta\in\Theta$,
we obtain
\begin{align*}
\mathcal{E}_{\Phi_{x}}(f) & =\int_{\Theta}\pi({\rm d}\theta){\rm var}_{\pi_{\theta}}\left(f\right)\frac{\int_{\Theta}\pi({\rm d}\theta)\mathcal{E}_{\Pi_{\theta}}\bigl(f\bigr)}{\int_{\Theta}\pi({\rm d}\theta){\rm var}_{\pi_{\theta}}\left(f\right)}\\
 & =\mathcal{E}_{\Gamma_{x}}(f)\times\frac{\int_{\Theta}\pi({\rm d}\theta)\mathbb{I}\{{\rm var}_{\pi_{\theta}}\left(f\right)>0\}{\rm var}_{\pi_{\theta}}\left(f\right)\frac{\mathcal{E}_{\Pi_{\theta}}\bigl(\bar{f}_{\theta}\bigr)}{{\rm var}_{\pi_{\theta}}\left(\bar{f}_{\theta}\right)}}{\int_{\Theta}\pi({\rm d}\theta){\rm var}_{\pi_{\theta}}\left(f\right)}\\
 & \geq\mathcal{E}_{\Gamma_{x}}(f)\times\frac{\int_{\Theta}\pi({\rm d}\theta){\rm var}_{\pi_{\theta}}\left(f\right){\rm Gap}\bigl(\Pi_{\theta}\bigr)}{\int_{\Theta}\pi({\rm d}\theta){\rm var}_{\pi_{\theta}}\left(f\right)}\\
 & \geq\mathcal{E}_{\Gamma_{x}}(f)\times\inf_{g\in L_{0}^{2}\bigl(\mathsf{X},\pi\bigr)}\frac{\int_{\Theta}\pi({\rm d}\theta){\rm var}_{\pi_{\theta}}\left(g\right){\rm Gap}\bigl(\Pi_{\theta}\bigr)}{\int_{\Theta}\pi({\rm d}\theta){\rm var}_{\pi_{\theta}}\left(g\right)}\quad,
\end{align*}
where we have used that for any $g\in L_{0}^{2}\bigl(\mathsf{X},\pi\bigr)$,
$\mathcal{E}_{\Pi_{\theta}}\bigl(g\bigr)\leq2{\rm var}_{\pi_{\theta}}\left(g\right)$
and that the set $A:=\bigl\{\theta\in\Theta:{\rm var}_{\pi_{\theta}}(\bar{f}_{\theta})=\infty\bigr\}$
satisfies $\pi\bigl(A\times\mathsf{X}\bigr)=0$. The latter result
follows from ${\rm var}_{\pi}(f)<\infty$ and the variance decomposition
identity: $\|f\|_{\pi}^{2}=\|f-\bar{f}_{\theta}\|_{\pi}^{2}+\|\bar{f}_{\theta}\|_{\pi}^{2}$.
We deduce \ref{item:dirichlet-order} from the last inequality. Points
\ref{item:gap-order} and \ref{item:var-order} then follow from \citep[Lemma~\ref{lem:minorizationdirichletboundgapvariance}]{andrieu2015uniformsupplement}.

We next turn into \ref{item:uniform-pos}.  As above, we find that
\begin{align*}
\mathcal{E}_{\Phi_{x}}(f) & \leq\mathcal{E}_{\Gamma_{x}}(f)\times\frac{\int_{\Theta}\pi({\rm d}\theta)\mathbb{I}\{{\rm var}_{\pi_{\theta}}\left(f\right)>0\}{\rm var}_{\pi_{\theta}}\left(f\right)\sup_{g\in L_{0}^{2}(\mathsf{X},\pi_{\theta})}\frac{\mathcal{E}_{\Pi_{\theta}}\bigl(g\bigr)}{{\rm var}_{\pi_{\theta}}\left(g\right)}}{\int_{\Theta}\pi({\rm d}\theta){\rm var}_{\pi_{\theta}}\left(f\right)}\quad.
\end{align*}
Under the uniform minorisation condition, we have $\mathcal{E}_{\Pi_{\theta}}\bigl(g\bigr)\le(2-\epsilon)\mathrm{var}_{\pi_{\theta}}(g)$
\citep[Proposition~\ref{prop:boundsspectralgapandvarianceforP_N}]{andrieu2015uniformsupplement},
and consequently $\mathcal{E}_{\Phi_{x}}(f)\leq(2-\varepsilon)\mathcal{E}_{\Gamma_{x}}(f)$.
When $\Pi_{\theta}$ is a positive operator for any $\theta\in\Theta$,
we have $\mathcal{E}_{\Pi_{\theta}}\bigl(g\bigr)\le\mathrm{var}_{\pi_{\theta}}(g)$
and consequently $\mathcal{E}_{\Phi_{x}}(f)\leq\mathcal{E}_{\Gamma_{x}}(f)$.\end{proof}
\begin{rem}
\label{rem:pgibbs_geometric_result}In relation to Theorem \ref{thm:generalMWGresult}
:\end{rem}
\begin{enumerate}[label=(\alph*)]
\item it may be easier in practice to use the lower bound $\underline{\varrho}:=\inf_{\theta\in\Theta}{\rm Gap}\bigl(\Pi_{\theta}\bigr)\leq\varrho$
which leads to ${\rm Gap}\bigl(\Phi_{x}\bigr)\geq\underline{\varrho}\times{\rm Gap}\left(\Gamma_{x}\right)$
and ${\rm var}\bigl(f,\Phi_{x}\bigr)\leq(\underline{\varrho}^{-1}-1){\rm var}_{\pi}(f)+{\rm \underline{\varrho}^{-1}}{\rm var}\left(f,\Gamma_{x}\right)$
when $\underline{\varrho}>0$,
\item one could suggest iterating $\Pi_{\theta}$ sufficiently many times,
say $k_{\theta}$ times, in order to ensure that $\Pi_{\theta}^{k_{\theta}}$
satisfies the uniform in $\theta$ properties of the type suggested
above. This would require however a computable quantitative bound
on the spectral gap of $\Pi_{\theta}$ ,
\item the lower bound in \ref{item:var-order} is motivated by the fact
that $\{\Pi_{\theta},\theta\in\Theta\}$ may be a family with non-positive
elements, which may introduce negative correlations. On the contrary
in the situation where $\{\Pi_{\theta},\theta\in\Theta\}$ is a collection
of positive operators (e.g. cSMC kernels) then \ref{item:gap-order}
implies that $\Phi_{x}$ is geometrically ergodic as soon as $\Gamma_{x}$
is geometrically ergodic and $\varrho>0$ (and of course $\Gamma_{x}$
is always positive) and \ref{item:uniform-pos}(ii) that $\Phi_{x}$
is always inferior to $\Gamma_{x}$ in terms of asymptotic variance.
In the context of the PGibbs sampler the latter result parallels what
is known for pseudo-marginal algorithms~\citep{andrieu2015},
\item we note that from~\citep[Theorem~1; Proposition~1]{roberts2001markov}
$\Phi$ is geometrically ergodic as soon as $\Phi_{x}$ is geometrically
ergodic.
\end{enumerate}

Now we show how these results can be transferred to the $\{\theta_{i}\}$
chain. 
\begin{thm}
\label{thm:MWG-functions-of-theta}Let the notation be as in Theorem~\ref{thm:generalMWGresult}.
Then,
\begin{enumerate}[label=(\alph*)]
\item \label{enu:G-stuff}assume that for some class of functions $\mathcal{G}\subset\{g:\mathsf{X}\to\mathbb{R}\,:\,\pi(|g|)<\infty\}$
there exists a function $\left|\cdot\right|_{\mathcal{G}}:\mathcal{G}\rightarrow[0,\infty]$
and $\rho\in[0,1)$ such that for any probability distribution $\nu$
on $\bigl(\mathsf{X},\mathcal{B}(\mathsf{X})\bigr)$ there exist $W_{\nu}\in[0,\infty]$
such that for all $g\in\mathcal{G}$ and any $k\geq1$  
\[
\left|\nu\Phi_{x}^{k}(g)-\pi(g)\right|\leq\left|g\right|_{\mathcal{G}}W_{\nu}\rho^{k}\quad,
\]
then for any $f:\Theta\rightarrow\mathbb{R}$ such that $\bar{f}(x):=\pi_{x}\bigl(f\bigr)\in\mathcal{G}$
and any $k\geq2$ 
\[
\left|\nu\Phi^{k}(f)-\pi(f)\right|\leq\left|\bar{f}\right|_{\mathcal{G}}W_{\nu}\rho^{k-1}\quad,
\]

\item for any $f\in L^{2}\bigl(\Theta,\pi\bigr)$, letting for any $x\in\mathsf{X}$
$\bar{f}(x):=\pi_{x}\bigl(f\bigr)\in L^{2}\bigl(\mathsf{X},\pi\bigr)$,
we have for any $k\geq1$
\[
\left\langle f,\Phi^{k}f\right\rangle _{\pi}=\left\langle \bar{f},\Phi_{x}^{k-1}\bar{f}\right\rangle _{\pi}
\]
and
\[
{\rm {\rm var}}\bigl(f,\Phi\bigr)={\rm var}_{\pi}\bigl(f\bigr)+{\rm var}_{\pi}\bigl(\bar{f}\bigr)+{\rm var}(\bar{f},\Phi_{x})\quad,
\]

\item if $\varrho>0$ defined in \ref{eq:varrho}, then for $f\in L^{2}\bigl(\Theta,\pi\bigr)$
\begin{align*}
{\rm var}(f,\Phi)\leq & {\rm var}_{\pi}\bigl(f\bigr)+\varrho^{-1}{\rm var}_{\pi}\bigl(\bar{f}\bigr)+{\rm \varrho^{-1}}{\rm var}\left(\bar{f},\Gamma_{x}\right)\\
\leq & (1-\varrho^{-1}){\rm var}_{\pi}\bigl(f\bigr)+{\rm \varrho^{-1}}{\rm var}\bigl(f,\Gamma\bigr)\quad,
\end{align*}

\item if for all $\theta\in\Theta$, $\Pi_{\theta}$ is a positive operator,
then for $f\in L^{2}\bigl(\Theta,\pi\bigr)$ ${\rm var}(f,\Phi)\geq{\rm var}(f,\Gamma)$.
\end{enumerate}
\end{thm}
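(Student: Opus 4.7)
The plan rests on one elementary observation: since $\Pi_{\theta}$ is a Markov kernel on $\mathsf{X}$, for any measurable $f:\Theta\to\mathbb{R}$ with $\pi(|f|)<\infty$,
\[
\Phi f(\theta_{0},x_{0})=\int\pi_{x_{0}}(\mathrm{d}\theta)\,\Pi_{\theta}(x_{0},\mathrm{d}y)\,f(\theta)=\int\pi_{x_{0}}(\mathrm{d}\theta)\,f(\theta)=\bar{f}(x_{0}),
\]
so a single application of $\Phi$ turns a function of $\theta$ into a function of $x$ alone and removes the dependence on $\theta_{0}$. Since $\Phi_{x}(x_{0},\mathrm{d}y)=\int\pi_{x_{0}}(\mathrm{d}\theta)\Pi_{\theta}(x_{0},\mathrm{d}y)$, iteration gives $\Phi^{k}f=\Phi_{x}^{k-1}\bar{f}$ for every $k\ge1$, and $\pi(f)=\int\pi(\mathrm{d}x)\pi_{x}(f)=\pi(\bar{f})$ so the mean corrections coincide.

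Part~(a) is then immediate: $\nu\Phi^{k}(f)-\pi(f)=\nu\Phi_{x}^{k-1}(\bar{f})-\pi(\bar{f})$, and applying the hypothesis to $g=\bar{f}\in\mathcal{G}$ at exponent $k-1$ yields the stated bound. For the inner-product identity in (b), substitute $\Phi^{k}f=\Phi_{x}^{k-1}\bar{f}$ inside $\langle f,\Phi^{k}f\rangle_{\pi}$ and use $\int\pi_{x}(\mathrm{d}\theta)f(\theta)=\bar{f}(x)$ to replace the outer $f(\theta)$ by $\bar{f}(x)$. For the asymptotic variance identity, assume without loss of generality $\pi(f)=0$ and expand
\[
\mathrm{var}(f,\Phi)=\|f\|_{\pi}^{2}+2\sum_{k\ge1}\langle f,\Phi^{k}f\rangle_{\pi};
\]
substituting $\langle f,\Phi^{k}f\rangle_{\pi}=\langle\bar{f},\Phi_{x}^{k-1}\bar{f}\rangle_{\pi}$, separating the $k=1$ term as $2\|\bar{f}\|_{\pi}^{2}$ and recognising the remaining tail $2\sum_{j\ge1}\langle\bar{f},\Phi_{x}^{j}\bar{f}\rangle_{\pi}$ as $\mathrm{var}(\bar{f},\Phi_{x})-\|\bar{f}\|_{\pi}^{2}$ delivers $\mathrm{var}_{\pi}(f)+\mathrm{var}_{\pi}(\bar{f})+\mathrm{var}(\bar{f},\Phi_{x})$.

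Parts (c) and (d) are pure bookkeeping, using the (b)-identity for both $\Phi$ and $\Gamma$ (the latter being the special case $\Pi_{\theta}=\pi_{\theta}$, for which $\Phi_{x}=\Gamma_{x}$). For (c), feed $\bar{f}$ into Theorem~\ref{thm:generalMWGresult}(c) to obtain
\[
\mathrm{var}(\bar{f},\Phi_{x})\le(\varrho^{-1}-1)\mathrm{var}_{\pi}(\bar{f})+\varrho^{-1}\mathrm{var}(\bar{f},\Gamma_{x}),
\]
substitute into the (b)-identity to produce the first inequality, then use $\mathrm{var}(\bar{f},\Gamma_{x})=\mathrm{var}(f,\Gamma)-\mathrm{var}_{\pi}(f)-\mathrm{var}_{\pi}(\bar{f})$ to eliminate $\mathrm{var}(\bar{f},\Gamma_{x})$; the $\mathrm{var}_{\pi}(\bar{f})$ terms cancel and the second inequality of (c) falls out. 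For (d), Theorem~\ref{thm:generalMWGresult}(d)(ii) gives $\mathrm{var}(\bar{f},\Phi_{x})\ge\mathrm{var}(\bar{f},\Gamma_{x})$ under positivity of each $\Pi_{\theta}$, and substituting into the two (b)-identities immediately gives $\mathrm{var}(f,\Phi)\ge\mathrm{var}(f,\Gamma)$.

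The only mildly delicate step is the variance expansion in (b), because $\Phi$ itself need not be reversible---only its $x$-marginal $\Phi_{x}$ is. Stationarity alone nonetheless licenses the spectral sum representation whenever the series of lag covariances converges absolutely, and absolute convergence of $\sum_{j}|\langle\bar{f},\Phi_{x}^{j}\bar{f}\rangle_{\pi}|$ is exactly what is encoded in the finiteness of $\mathrm{var}(\bar{f},\Phi_{x})$, which is the only case in which (c) and (d) carry non-vacuous content. Beyond this technical point the argument is routine algebraic rearrangement of identities already established.
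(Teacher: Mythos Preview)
Your proof is correct and follows essentially the same route as the paper's: the key observation $\Phi f(\theta_{0},x_{0})=\bar{f}(x_{0})$ together with $\Phi g=\Phi_{x}g$ for $g:\mathsf{X}\to\mathbb{R}$ yields $\Phi^{k}f=\Phi_{x}^{k-1}\bar{f}$, from which (a) and the inner-product identity in (b) are immediate; the variance identity in (b) comes from the autocovariance expansion exactly as you indicate, and (c)--(d) follow by substituting the bounds of Theorem~\ref{thm:generalMWGresult} into this identity (applied to both $\Phi$ and $\Gamma$). Your closing remark about the convergence of the lag-covariance series is a point the paper passes over silently, so if anything you are slightly more careful there.
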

\begin{proof}
We remark that without loss of generality we can let $f\in L_{0}^{2}(\Theta,\pi)$
throughout. First note that for $f\in L_{0}^{2}\bigl(\Theta,\pi\bigr)$
and any $\bigl(\theta,x_{0}\bigr)\in\Theta\times\mathsf{X}$ 
\begin{align*}
\Phi\bigl(\theta,x_{0};f\bigr) & =\Phi(x_{0};f)=\pi_{x_{0}}\bigl(f\bigr)=\bar{f}(x_{0})\quad,
\end{align*}
and for $g\in L^{2}\bigl(\mathsf{X},\pi\bigr)$ and any $p\geq1$,
$\Phi^{p}(\theta,x_{0};g)=\Phi_{x}^{p}(x_{0};g)$. The first result
is straightforward upon remarking that for $k\geq1$ 
\[
\Phi^{k+1}(x_{0},f)-\pi(f)=\Phi_{x}^{k}(x_{0},\bar{f})-\pi(\bar{f})\quad.
\]
For the second and third point, using the remarks above, for $f\in L_{0}^{2}\bigl(\Theta,\pi\bigr)$
and $k\geq1$ 
\begin{align*}
\left\langle f,\Phi^{k}f\right\rangle _{\pi} & =\left\langle f,\Phi^{k-1}\bar{f}\right\rangle _{\pi}=\left\langle \bar{f},\Phi_{x}^{k-1}\bar{f}\right\rangle _{\pi}\;.
\end{align*}
Now $\|f\|_{\pi}^{2}=\left\langle f-\bar{f}+\bar{f},f-\bar{f}+\bar{f}\right\rangle _{\pi}=\|\bar{f}\|_{\pi}^{2}+\|f-\bar{f}\|_{\pi}^{2}$,
which is the variance decomposition identity and by noting that $\pi\bigl(\bar{f}\bigr)=0$
lets us deduce that $f\in L_{0}^{2}\bigl(\Theta,\pi\bigr)$ implies
that $\bar{f}\in L_{0}^{2}\bigl(\mathsf{X},\pi\bigr)$. Now,
\begin{eqnarray*}
{\rm var}(f,\Phi) & = & \|f\|_{\pi}^{2}+2\sum_{k=1}^{\infty}\left\langle f,\Phi^{k}f\right\rangle _{\pi}=\|f\|_{\pi}^{2}+2\sum_{k=1}^{\infty}\left\langle \bar{f},\Phi_{x}^{k-1}\bar{f}\right\rangle _{\pi}\\
 & = & \|f\|_{\pi}^{2}+2\|\bar{f}\|_{\pi}^{2}+2\sum_{k=1}^{\infty}\left\langle \bar{f},\Phi_{x}^{k}\bar{f}\right\rangle _{\pi}=\|f\|_{\pi}^{2}+\|\bar{f}\|_{\pi}^{2}+{\rm var}\bigl(\bar{f},\Phi_{x}\bigr)\quad.
\end{eqnarray*}
We conclude by noting that for $f\in L^{2}\bigl(\mathsf{X},\pi\bigr)$
then ${\rm var}_{\pi}\bigl(f\bigr)=\|f-\pi(f)\|_{\pi}^{2}$ and ${\rm var}_{\pi}\bigl(\bar{f}\bigr)=\|\bar{f}-\pi(f)\|_{\pi}^{2}=\|\overline{f-\pi(f)}\|_{\pi}^{2}$.
We will also use the equality above for $\Gamma$ and $\Gamma_{x}$,
since again the latter corresponds to a particular instance of the
above. We can now use the bound from Theorem~\ref{thm:generalMWGresult},
which leads, for $f\in L_{0}^{2}\bigl(\Theta,\pi\bigr)$, to
\begin{align*}
{\rm var}(f,\Phi) & \leq\|f\|_{\pi}^{2}+\|\bar{f}\|_{\pi}^{2}+(\varrho^{-1}-1)\|\bar{f}\|_{\pi}^{2}+{\rm \varrho^{-1}}{\rm var}\left(\bar{f},\Gamma_{x}\right)\\
 & =\|f\|_{\pi}^{2}+\varrho^{-1}\|\bar{f}\|_{\pi}^{2}+{\rm \varrho^{-1}}{\rm var}\left(\bar{f},\Gamma_{x}\right)\quad.
\end{align*}
From the remark above we deduce that 
\begin{align*}
\|f\|_{\pi}^{2}+\varrho^{-1}\|\bar{f}\|_{\pi}^{2}+{\rm \varrho^{-1}}{\rm var}\left(\bar{f},\Gamma_{x}\right) & \leq\|f\|_{\pi}^{2}+\varrho^{-1}\|\bar{f}\|_{\pi}^{2}+{\rm \varrho^{-1}}\big[{\rm var}(f,\Gamma)-\|f\|_{\pi}^{2}-\|\bar{f}\|_{\pi}^{2}\big]\\
 & =(1-\varrho^{-1})\|f\|_{\pi}^{2}+{\rm \varrho^{-1}}{\rm var}(f,\Gamma)\quad.
\end{align*}
We conclude as above. The final statement follows from ${\rm var}\bigl(\bar{f},\Phi_{x}\bigr)\geq{\rm var}\bigl(\bar{f},\Gamma_{x}\bigr)$
(see Theorem~\ref{thm:generalMWGresult}) and the equality established
above for $\Phi$ and $\Phi_{x}$ and $\Gamma$ and $\Gamma_{x}$.\end{proof}
\begin{cor}
Consider the PGibbs sampler with $N\geq2$ particles with kernel $\Phi_{N}$
defined as in (\ref{eq:def:met-within-gibbs}) such that for any $\theta\in\Theta$,
$\Pi_{\theta}=P_{\theta,N}$ is the i-cSMC kernel as defined in Section~\ref{sec:The-i-CSMC}
for the families $\{M_{\theta,t}\}$and $\{G_{\theta,t}\}$ of kernels
and potentials on $\mathsf{Z}\times\mathcal{B}\bigl(\mathsf{Z}\bigr)$
and $\mathsf{Z}$ respectively. For any $\theta\in\Theta$ we let
$\gamma_{\theta,T}$ be the corresponding normalizing constant as
defined below (\ref{eq:defofpiforSMCframework}). Then, the results
of Theorems \ref{thm:generalMWGresult} and \ref{thm:MWG-functions-of-theta}
hold as follows:
\begin{enumerate}[label=(\alph*)]
\item if 
\[
\pi-{\rm ess}\sup_{\theta}\frac{\prod_{t=1}^{T}\bar{G}_{\theta,t}}{\gamma_{\theta,T}}<\infty\quad,
\]
then $\varrho\ge\epsilon_{N}$ as defined in Corollary~\ref{cor:convergence_epsilon},
\item or we have the uniform mixing condition, for some $0\leq\alpha<\infty$,
\[
\pi-{\rm ess}\sup_{\theta,z}\frac{Q_{\theta,p,p+k}(1)(z)}{\eta_{\theta,p}Q_{\theta,p,p+k}(1)}\leq\alpha\quad,
\]
then $\varrho\ge\epsilon_{N}$ as defined in Corollary~\ref{cor:linear_in_T_epsilon_N_bound}.
\end{enumerate}

 \noindent In particular, in both cases $\varrho$ convergences
to one as $N\rightarrow\infty$, implying that the spectral gaps and
the asymptotic variances associated with the PGibbs sampler converge
to those of the related Gibbs sampler.

\end{cor}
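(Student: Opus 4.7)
The plan is to reduce the corollary to the pointwise (in $\theta$) bounds already obtained for the i-cSMC in Section~\ref{sec:The-i-CSMC} and then feed the resulting uniform-in-$\theta$ spectral gap lower bound into the definition of $\varrho$ from Theorem~\ref{thm:generalMWGresult}. Concretely, for each fixed $\theta\in\Theta$, $\Pi_\theta=P_{\theta,N}$ is itself an i-cSMC kernel targeting $\pi_\theta$ built from the families $\{M_{\theta,t}\}$ and $\{G_{\theta,t}\}$, so the analysis of Sections~\ref{sec:Minorization-and-Dirichlet}--\ref{sec:Estimates-of-the} applies verbatim. In case (a), the hypothesis $\pi\text{-}\mathrm{ess\,sup}_\theta\prod_{t=1}^T\bar G_{\theta,t}/\gamma_{\theta,T}<\infty$ ensures that for $\pi$-a.e.\ $\theta$ Proposition~\ref{prop:boundepsilonNwithuniformboundG} yields the same bound on $\mathbb{E}_{\mathbf{1},x,\mathbf{2},y}^{N}[\hat\gamma_{\theta,T}^{N}(Z_{1:T})]$ used in Corollary~\ref{cor:convergence_epsilon}, and hence the minorization $P_{\theta,N}(x,S)\ge\epsilon_N\pi_\theta(S)$ for $\pi$-a.e.\ $\theta$. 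Case (b) is analogous, invoking Proposition~\ref{prop:mixing_bound} and Corollary~\ref{cor:linear_in_T_epsilon_N_bound}.

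From the uniform-in-$\theta$ minorization, the standard Hilbert-space fact (\citep[Proposition~\ref{prop:boundsspectralgapandvarianceforP_N}]{andrieu2015uniformsupplement}) gives $\mathrm{Gap}(P_{\theta,N})\ge\epsilon_N$ for $\pi$-a.e.\ $\theta$. Substituting into
\[
\varrho=\inf_{f\in L^{2}(\mathsf{X},\pi)}\frac{\int_\Theta\pi(\mathrm{d}\theta)\,\mathrm{var}_{\pi_\theta}(f)\,\mathrm{Gap}(P_{\theta,N})}{\int_\Theta\pi(\mathrm{d}\theta)\,\mathrm{var}_{\pi_\theta}(f)}
\]
and using that the $\pi$-null set of $\theta$ on which the minorization could fail contributes zero to both integrals, we get $\varrho\ge\epsilon_N$, which is exactly the claim. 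The convergence $\varrho\to1$ as $N\to\infty$ then follows from $\epsilon_N\to1$, which was established in Corollaries~\ref{cor:convergence_epsilon} and~\ref{cor:linear_in_T_epsilon_N_bound} respectively.

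Having $\varrho\to 1$, Theorem~\ref{thm:generalMWGresult}\ref{item:gap-order} yields $\mathrm{Gap}(\Phi_{N,x})\to\mathrm{Gap}(\Gamma_x)$, and \ref{item:var-order} together with \ref{item:uniform-pos}(ii) (applicable because each $P_{\theta,N}$ is a positive operator by Lemma~\ref{lem:selfajoint_positive}) sandwiches $\mathrm{var}(f,\Phi_{N,x})$ between $\mathrm{var}(f,\Gamma_x)$ and $(\varrho^{-1}-1)\mathrm{var}_\pi(f)+\varrho^{-1}\mathrm{var}(f,\Gamma_x)$, both converging to $\mathrm{var}(f,\Gamma_x)$. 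The corresponding statements for functions $f\in L^2(\Theta,\pi)$ on the full joint chain follow from Theorem~\ref{thm:MWG-functions-of-theta}.

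The main subtlety I would expect is bookkeeping of the null sets: the bound from Proposition~\ref{prop:boundepsilonNwithuniformboundG} is stated with a pointwise $\sup_z G_t(z)$, whereas the corollary only assumes a $\pi$-essential supremum, so one must verify that the pointwise cSMC analysis can be carried out after discarding a $\pi$-null set of $\theta$ and, within each remaining $\theta$, a $\pi_\theta$-null set of states---which is legitimate since $\varrho$ and the asymptotic variances are insensitive to such modifications. The rest of the argument is a direct composition of results already in the paper; no new estimation is required.
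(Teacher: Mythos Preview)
Your proposal is correct and follows exactly the route the paper has in mind: the corollary is stated without proof because it is an immediate combination of the $\theta$-wise i-cSMC minorizations (Corollaries~\ref{cor:convergence_epsilon} and~\ref{cor:linear_in_T_epsilon_N_bound}), the resulting lower bound $\mathrm{Gap}(P_{\theta,N})\ge\epsilon_N$ from Proposition~\ref{prop:boundsspectralgapandvarianceforP_N}, and the definition of $\varrho$ (or, equivalently, the simpler bound $\varrho\ge\inf_\theta\mathrm{Gap}(\Pi_\theta)$ from Remark~\ref{rem:pgibbs_geometric_result}(a)). One small refinement: Theorem~\ref{thm:generalMWGresult}\ref{item:gap-order} alone only gives $\mathrm{Gap}(\Phi_{N,x})\le 2\,\mathrm{Gap}(\Gamma_x)$, so to conclude $\mathrm{Gap}(\Phi_{N,x})\to\mathrm{Gap}(\Gamma_x)$ you should also invoke positivity of $P_{\theta,N}$, which via the Dirichlet-form inequality $\mathcal{E}_{\Phi_x}(f)\le\mathcal{E}_{\Gamma_x}(f)$ established in the proof of part~\ref{item:uniform-pos}(ii) tightens the upper bound to $\mathrm{Gap}(\Phi_{N,x})\le\mathrm{Gap}(\Gamma_x)$.
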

\begin{rem}
It is worth noting that terms related to $\gamma_{\theta,T}$ appear
in all these bounds. So, for example in the first part it is not sufficient
that our potentials $\{G_{\theta,t}\}$ are essentially bounded, but
it is sufficient if, for all $t\in[T]$, $\pi_{t}-{\rm ess}\sup_{\theta,x_{t}}G_{\theta,t}(x_{t})/\eta_{\theta,t}(G_{t})$
is bounded.

\end{rem}

\section{Discussion\label{sec:Discussion}}

The developments above go some way in characterizing the behaviour
of i-cSMC and associated PGibbs Markov chains, and raise a number
of possible future directions for research. We have already embarked
upon investigating some potentially practical uses of the minorization
conditions and spectral properties for these chains. Of particular
interest in practice is how to choose $N$ in the i-cSMC algorithm
so as to balance the trade off between mixing properties of $P_{N}$
and the total number of iterations that can be performed with limited
computational resources. Remark~\ref{rem:tuning}, for example, can
be used to find approximately good values of $N$ in this spirit,
but can only serve as a heuristic. In particular, while Proposition~\ref{prop:uniformminorizationcrude}
may provide a fairly accurate bound in the large $N$ regime, it is
unclear how much is lost in applying Jensen's inequality, and consequently
how accurate estimates such as those in Remark~\ref{rem:tuning}
can be. It is possible that results such as those in~\citep{berard2013lognormal}
may provide a way to exploit additional structure often found in statistical
applications.

The results for the i-cSMC and PGibbs Markov chains developed here
can be compared and contrasted with similar results for the Particle
Independent Metropolis--Hastings (PIMH) and PMMH Markov chains~\citep{andrieu-doucet-holenstein}.
We summarize here the detailed comparison provided in~\citep[Appendix~\ref{sec:Comparison-with-Particle}]{andrieu2015uniformsupplement}.
Like i-cSMC, PIMH is an exact approximation of an independent sampler
but PMMH is an exact approximation of an idealized Metropolis--Hastings
kernel, rather than a Gibbs sampler. Just as i-cSMC can be viewed
as a constituent element of PGibbs, PIMH can be viewed as playing
the same role within PMMH. Central to the analysis of PIMH is the
essential supremum of the normalizing constant estimate $\hat{\gamma}_{T}^{N}\bigl(Z_{1:T}\bigr)$
introduced in Section~\ref{sec:Minorization-and-Dirichlet} with
respect to the law of a standard SMC algorithm and indeed the PIMH
Markov chain is (uniformly) geometrically ergodic if and only if this
supremum is finite as a consequence of the characterisation of independent
Metropolis--Hastings chains in~\citep{mengersen-tweedie}. However,
it can also be seen that the rate of convergence of PIMH will typically
not improve as $N$ increases, in contrast with the convergence for
the i-cSMC (see Propositions~\ref{prop:boundepsilonNwithuniformboundG}
and~\ref{prop:mixing_bound}).

For PMMH,~\citep{andrieu2015} show that if the essential supremum
of the relative normalizing constant estimate $\hat{\gamma}_{\theta,T}^{N}\bigl(Z_{1:T}\bigr)/\gamma_{\theta,T}$
is moreover bounded essentially uniformly in $\theta$ then the existence
of a spectral gap of the idealized Metropolis--Hastings Markov kernel
it approximates is inherited by PMMH. However, the rate of convergence
of the PMMH Markov chain when this occurs does not improve in general
as $N$ increases, in contrast to our results for PGibbs Markov chains.
In this context, weak convergence in $N$ of the asymptotic variance
of estimates of $\pi(f)$ to the corresponding asymptotic variance
of the Metropolis--Hasting kernel is nevertheless provided by~\citep[Proposition~19]{andrieu2015}
for all $f\in L^{2}(\Theta,\pi)$ but this can be contrasted with
quantitative bounds obtained in Theorem~\ref{thm:MWG-functions-of-theta}.

The one step uniform minorization condition in Corollary~\ref{cor:uniformminorbymu},
where the minorization measure is the invariant distribution of the
Markov chain, suggests that it may be possible to apply coupling from
the past techniques (see, e.g.,~\citep{propp1996exact,murdoch:green:1998,hobert2004mixture})
in order to produce samples from exactly this distribution. It is,
however, not clear how to implement such an algorithm in general,
 although~\citep{lee2014perfect} provides a perfect simulation
algorithm motivated by Theorem~\ref{thm:THEtheorem}. Finally, our
analysis has focused mainly on the case where the essential boundedness
condition holds. However, a refined analysis may permit characterization
of the i-cSMC and hence the PGibbs Markov chains even in the absence
of this condition, with parallels to~\citep{andrieu2015}.
\begin{acknowledgement*}
CA's research was supported by EPSRC EP/K009575/1 Bayesian Inference
for Big Data with Stochastic Gradient Markov Chain Monte Carlo and
EP/K0\-14463/1 Intractable Likelihood: New Challenges from Modern
Applications (ILike). MV was supported by Academy of Finland grant
250575.
\end{acknowledgement*}

\pagebreak{}

\part*{Supplementary material}

\appendix

\section{Proof of Lemma~\ref{lem:simple_correspondence} \label{sec:Proof-of-Lemma_simplecorrespondence}}

The proof of Lemma~\ref{lem:simple_correspondence} is a simple
consequence of Lemma~\ref{lem:correspondenceP_1andP_2} \ref{enu:E-I}.
We introduce the set of indices $\mathcal{J}_{T}:=\bigcup_{m=0}^{T}\{1\}^{m}\times\{2,\ldots,N\}^{T-m}$,
which will allow us to define the lineages coalescing with $\mathbf{1}\in\{1\}^{T}$
at some point in the past, and $m_{\mathbf{i}}:=\max\{k:i_{k}=1\}$
(with the convention that $\max\emptyset=0$) the time at which coalescence
occurs. 
\begin{lem}
\label{lem:correspondenceP_1andP_2}For any $x\in\mathsf{X}$, $z_{1:T}\in\mathsf{X}^{T}$
and $a_{1:T}\in[N]^{N(T-1)}\times[N]$,
\begin{enumerate}[label=(\alph*)]
\item for any\textup{ $y_{2:T}\in\mathsf{Z}^{T-1}$ and }$\mathbf{k}=k_{1:T}\in[N]^{T}$
such that $k^{1}\neq1$ 
\[
\mathbb{P}_{\mathbf{1},x}^{N}\left(Z_{1}\in{\rm d}z_{1}\right)=\int_{\mathsf{Z}}M_{1}\bigl({\rm d}y_{1}\bigr)\mathbb{P}_{\mathbf{1},x,\mathbf{k},y}^{N}\left(Z_{1}\in{\rm d}z_{1}\right)\quad,
\]
and for $t\in\{2,\ldots,T\}$, any $(y_{1},\ldots,y_{t-1},y_{t+1},\ldots,y_{T})\in\mathsf{Z}^{T-1}$,
$\mathbf{k}\in[N]^{T}$ such that $k_{t}\neq1$ and $a_{t-1}^{k_{t}}=k_{t-1}$
\begin{multline*}
\mathbb{P}_{\mathbf{1},x}^{N}\left(Z_{t}\in{\rm d}z_{t},A_{t-1}=a_{t-1}\left|Z_{t-1}=z_{t-1}\right.\right)=\int_{\mathsf{Z}}\frac{G_{t-1}\bigl(z_{t-1}^{k_{t-1}}\bigr)}{\sum_{j=1}^{N}G_{t-1}\bigl(z_{t-1}^{j}\bigr)}M_{t}\bigl(z_{t-1}^{k_{t-1}},{\rm d}y_{t}\bigr)\\
\times\mathbb{P}_{\mathbf{1},x,\mathbf{k},y}^{N}\left(Z_{t}\in{\rm d}z_{t},A_{t-1}=a_{t-1}\left|Z_{t-1}=z_{t-1}\right.\right)\quad.
\end{multline*}

\item \label{enu:E-I}for $\mathbf{i}\in\mathcal{J}_{T}$ and $y_{1:m_{\mathbf{i}}}=x_{1:m_{\mathbf{i}}}$
we have \textup{
\begin{multline*}
\mathbb{E}_{\mathbf{1},x}^{N}\left[I_{\mathbf{i}}\bigl(Z_{1:T},A_{1:T},S\bigr)\right]\\
=\int_{\mathsf{Z}^{T-m_{\mathbf{i}}}}M_{m_{\mathbf{i}},T}(x_{m_{\mathbf{i}}},{\rm d}y_{m_{\mathbf{i}}+1:T})\times\mathbb{E}_{\mathbf{1},x,\mathbf{i},y}^{N}\left[\frac{{\textstyle {\displaystyle {\textstyle \prod_{t=m_{\mathbf{i}}}^{T}}G_{t}(y_{t})}}\times\mathbb{I}\{y\in S\}}{{\displaystyle {\textstyle \prod_{t=m_{\mathbf{i}}}^{T}\sum_{j=1}^{N}}G_{t}(Z_{t}^{j})}}\right].
\end{multline*}
}
\item for $\mathbf{i}\notin\mathcal{J}_{T}$\textup{, $\mathbb{E}_{\mathbf{1},x}^{N}\left[I_{\mathbf{i}}\bigl(Z_{1:T},A_{1:T},S\bigr)\right]=0$.}
\end{enumerate}
\end{lem}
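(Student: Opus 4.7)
The plan is to prove (a) first by direct expansion of the measure definitions, deduce (c) from a structural property of the cSMC, and then assemble (b) by iterating (a) along the lineage $\mathbf{i}$. All three parts are exact measure-theoretic identities; no analytic inequalities are required. For (a), the distribution $\mathbb{P}_{\mathbf{1},x,\mathbf{k},y}^{N}$ differs from $\mathbb{P}_{\mathbf{1},x}^{N}$ only in that it additionally fixes the particle at index $k_t$ to equal $y_t$ (through $\delta_{y_t}({\rm d}z_t^{k_t})$) and, for $t\geq 2$, its ancestor $a_{t-1}^{k_t}$ to $k_{t-1}$. Re-randomizing these two quantities amounts to integrating $y_t$ against the Markov kernel $M_t(z_{t-1}^{k_{t-1}},\cdot)$ and weighting by the multinomial factor $G_{t-1}(z_{t-1}^{k_{t-1}})/\sum_{j} G_{t-1}(z_{t-1}^{j})$, which is precisely what is required to reconstitute the $i=k_t$ factor in the product defining $\mathbb{P}_{\mathbf{1},x}^{N}$. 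The $t=1$ case is analogous, with the $M_1$ integration over $y_1$ simply restoring $M_1({\rm d}z_1^{k_1})$.

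For (c), the cSMC has the structural property that $A_{t-1}^{1}=1$ almost surely, as enforced by the factor $\mathbb{I}\{a_{t-1}^{1}=1\}$ in~(\ref{eq:def_P_=00007B1,x=00007D-other-times}). The indicator $I_{\mathbf{i}}$ requires $i_t=a_t^{i_{t+1}}$ for each $t<T$, so $i_{t+1}=1$ forces $i_t=a_t^{1}=1$; backwards induction then shows that $\{s:i_s=1\}$ is a downward-closed initial segment, equivalently $\mathbf{i}\in\mathcal{J}_{T}$. For $\mathbf{i}\notin\mathcal{J}_{T}$, the indicator $I_{\mathbf{i}}$ therefore vanishes almost surely under $\mathbb{P}_{\mathbf{1},x}^{N}$ and so does the expectation.

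For (b), fix $\mathbf{i}\in\mathcal{J}_{T}$ with $m:=m_{\mathbf{i}}$, and condition sequentially on $Z_{1:t-1},A_{1:t-2}$. For $t\leq m$ the constraints $i_{t-1}=1=a_{t-1}^{1}$ are automatic and the values $Z_t^{i_t}=Z_t^{1}=x_t$ are already fixed, so these times contribute trivially. For $t=m+1,\ldots,T$, apply (a) at time $t$: the multinomial selection $a_{t-1}^{i_t}=i_{t-1}$ contributes the factor $G_{t-1}(Z_{t-1}^{i_{t-1}})/\sum_{j}G_{t-1}(Z_{t-1}^{j})$ and the subsequent sampling $Z_t^{i_t}\sim M_t(Z_{t-1}^{i_{t-1}},\cdot)$ becomes an integral against $M_t(y_{t-1},{\rm d}y_t)$ under the substitution $y_t:=Z_t^{i_t}$. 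At the boundary $t=m+1$ this multinomial factor reduces to $G_m(x_m)/\sum_{j}G_m(Z_m^{j})$ since $i_m=1$ and hence $Z_m^{i_m}=x_m$. The terminal selection $A_T=i_T$ additionally contributes $G_T(y_T)/\sum_{j}G_T(Z_T^{j})$. Telescoping assembles the $G$-factors into $\prod_{t=m}^{T}G_t(y_t)/\prod_{t=m}^{T}\sum_{j}G_t(Z_t^{j})$ (with $y_m=x_m$ supplying the $t=m$ numerator), the Markov kernels concatenate into $M_{m,T}(x_m,{\rm d}y_{m+1:T})$, and the residual randomness over the remaining particles is precisely that of $\mathbb{P}_{\mathbf{1},x,\mathbf{i},y}^{N}$, giving the claimed formula. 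The main obstacle is purely notational bookkeeping, in particular at the boundary $t=m$ (where the first numerator term arises from a coalescence weight rather than a fresh sampling step) and at the terminal time $T$ (where the final selection contributes a $G$-factor without an accompanying $M$-kernel); no analytic argument beyond careful index-tracking is needed.
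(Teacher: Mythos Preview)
Your proposal is correct and follows essentially the same approach as the paper: part (a) by directly expanding the definitions and integrating out $y_t$ to restore the missing $i=k_t$ factor, part (c) from the almost-sure constraint $A_{t-1}^1=1$ together with the backward-closure observation on $\{s:i_s=1\}$, and part (b) by iterating (a) over $t=m_{\mathbf i}+1,\ldots,T$ with the terminal $A_T$ contributing the final $G_T$ factor. Your explicit remarks on the boundary contributions at $t=m_{\mathbf i}$ and $t=T$ match exactly how the paper handles the index bookkeeping.
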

We note that the above is well defined for $m_{\mathbf{i}}=0$ from
the definition of $M_{p,l}$ in Section~\ref{sec:The-i-CSMC} and
associated remark, and the convention that $x_{1:0}=y_{1:0}$ should
be ignored in this case.
\begin{proof}[Proof of Lemma~\ref{lem:correspondenceP_1andP_2}]
In order to alleviate notation we omit $Z_{t}\in\cdot,Z_{t-1}=\cdot$
and $A_{t-1}=\cdot$ and set $G_{t}^{k}:=G_{t}\bigl(z_{t}^{k}\bigr)$.
For the first point we note the independence on $(y_{2},\ldots,y_{T})\in\mathsf{Z}^{T-1}$
of 
\[
\mathbb{P}_{\mathbf{1},x,\mathbf{k},y}^{N}\left({\rm d}z_{1}\right)=\delta_{x_{1},y_{1}}\bigl({\rm d}z_{1}^{1}\times{\rm d}z_{1}^{k_{1}}\bigr)\prod_{i=2,i\neq k_{1}}^{N}M_{1}({\rm d}z_{1}^{i})\quad,
\]
and then since $k_{1}\neq1$,
\[
\int_{\mathsf{Z}}M_{1}\bigl({\rm d}y_{1}\bigr)\delta_{x_{1},y_{1}}\bigl({\rm d}z_{1}^{1}\times{\rm d}z_{1}^{k_{1}}\bigr)\prod_{i=2,i\neq k_{1}}^{N}M_{1}({\rm d}z_{1}^{i})=\delta_{x_{1}}\bigl({\rm d}z_{1}^{1}\bigr)\prod_{i=2}^{N}M_{1}({\rm d}z_{1}^{i})
\]
and conclude from (\ref{eq:def_P_=00007B1,x=00007D-time-1}). Similarly
we note the independence on $(y_{1},\ldots,y_{t-1},y_{t+1},\ldots,y_{T})\in\mathsf{Z}^{T-1}$
of
\[
\delta_{x_{t},y_{t}}\bigl({\rm d}z_{t}^{1}\times{\rm d}z_{t}^{k_{t}}\bigr)\times\mathbb{I}\{a_{t-1}^{1,k_{t}}=(1,k_{t-1})\}\prod_{i=2,i\neq k_{t}}^{N}\frac{G_{t-1}^{a_{t-1}^{i}}}{\sum_{j=1}^{N}G_{t-1}^{j}}M_{t}(z_{t-1}^{a_{t-1}^{i}},{\rm d}z_{t}^{i})
\]
 (we note however that we will have $\mathbb{P}_{\mathbf{1},x,\mathbf{k},y}\bigl(Z_{t-1}^{k_{t-1}}\in{\rm d}y_{t-1}\bigr)=1$)
and since $k_{t}\neq1$
\begin{multline*}
\mathbb{I}\{a_{t-1}^{1,k_{t}}=(1,k_{t-1})\}\int_{\mathsf{Z}}\frac{G_{t-1}^{k_{t-1}}}{\sum_{k=1}^{N}G_{t-1}^{k}}M_{t}\bigl(z_{t-1}^{k_{t-1}},{\rm d}y_{t}\bigr)\delta_{x_{t},y_{t}}\bigl({\rm d}z_{t}^{1}\times{\rm d}z_{t}^{k_{t}}\bigr)\prod_{i=2,i\neq k_{t}}^{N}\frac{G_{t-1}^{a_{t-1}^{i}}}{\sum_{j=1}^{N}G_{t-1}^{j}}M_{t}(z_{t-1}^{a_{t-1}^{i}},{\rm d}z_{t}^{i})\\
=\mathbb{I}\{a_{t-1}^{1}=1\}\delta_{x_{t}}\bigl({\rm d}z_{t}^{1}\bigr)\prod_{i=2}^{N}\frac{G_{t-1}^{a_{t-1}^{i}}}{\sum_{j=1}^{N}G_{t-1}^{j}}M_{t}(z_{t-1}^{a_{t-1}^{i}},{\rm d}z_{t}^{i})
\end{multline*}
and we conclude with (\ref{eq:def_P_=00007B1,x=00007D-other-times}).
For the second point, let $\mathbf{i}\in\mathcal{J}_{T}$, $a_{1:T}\in\bigl([N]^{N}\bigr)^{T-1}\times[N]$
such that $a_{t-1}^{i_{t}}=i_{t-1}$ for $t=m_{\mathbf{i}}+1,\ldots,T$,
$a_{T}=i_{T}$ and $y_{1:m_{\mathbf{i}}}=x_{1:m_{\mathbf{i}}}$ then,
with an obvious convention when $m_{\mathbf{i}}=1$ (i.e. $a_{0}$
does not exist and should be ignored), we have
\begin{align*}
{\textstyle \int_{\mathsf{Z}^{T-m_{\mathbf{i}}}}} & \mathbb{I}\{y\in S\}\times{\textstyle {\displaystyle \prod_{t=m_{\mathbf{i}}+1}^{T}}}M_{t}(y_{t-1},{\rm d}y_{t})\frac{G_{t-1}(y_{t-1})}{{\textstyle \sum_{j=1}^{N}}G_{t-1}^{j}}\times\mathbb{P}_{\mathbf{1},x,\mathbf{i},y}^{N}\left({\rm d}z_{m_{\mathbf{i}}+1:T},a_{m_{\mathbf{i}}:T-1}\left|z_{m_{\mathbf{i}}}\right.\right)\\
= & \int_{\mathsf{Z}^{T-m_{\mathbf{i}}}}{\displaystyle \prod_{t=m_{\mathbf{i}}+1}^{T}M_{t}(y_{t-1},{\rm d}y_{t})\frac{G_{t-1}^{i_{t-1}}}{{\textstyle \sum_{j=1}^{N}}G_{t-1}^{j}}}\mathbb{I}\{y\in S,a_{t-1}^{i_{t}}=i_{t-1}\}\mathbb{P}_{\mathbf{1},x,\mathbf{i},y}^{N}\left({\rm d}z_{t},a_{t-1}\left|z_{t-1}\right.\right)\\
= & \mathbb{I}\{(y_{1:m_{\mathbf{i}},}z_{m_{\mathbf{i}}+1:T}^{i_{m_{\mathbf{i}}+1:T}})\in S\}{\textstyle {\displaystyle \prod_{t=m_{\mathbf{i}}+1}^{T}}\mathbb{P}_{\mathbf{1},x}^{N}\left({\rm d}z_{t},a_{t-1}\left|z_{t-1}\right.\right)\mathbb{I}\{a_{t-1}^{i_{t}}=i_{t-1}\}}\\
= & \mathbb{I}\{(y_{1:m_{\mathbf{i}},}z_{m_{\mathbf{i}}+1:T}^{i_{m_{\mathbf{i}}+1:T}})\in S\}{\textstyle {\displaystyle \prod_{t=m_{\mathbf{i}}+1}^{T}}\mathbb{I}\{a_{t-1}^{i_{t}}=i_{t-1}\}}\mathbb{P}_{\mathbf{1},x}^{N}\left({\rm d}z_{m_{\mathbf{i}}+1:T},a_{m_{\mathbf{i}}:T-1}\left|z_{m_{\mathbf{i}}}\right.\right),
\end{align*}
where we have used the fact that from the structure of $\mathbb{P}_{\mathbf{1},x,\mathbf{i},y}^{N}\bigl(\cdot\bigr)$
we have $z_{m_{\mathbf{i}}+1:T}^{i_{m_{\mathbf{i}}+1:T}}=y_{m_{\mathbf{i}+1}:T}$.
We notice that $\mathbb{P}_{\mathbf{1},x,\mathbf{k},y}^{N}\left(A_{T}=k\left|Z_{T}=z_{T}\right.\right)=\mathbb{P}_{\mathbf{1},x}^{N}\left(A_{T}=k\left|Z_{T}=z_{T}\right.\right)$
and conclude from the definition of $\mathbb{P}_{\mathbf{1},x}^{N}\bigl(\cdot\bigr)$.
For the third point we remark that for any $z_{1:T},a_{1:T},S\in\bigl(\mathsf{Z}^{N}\bigr)^{T}\times\bigl([N]^{N}\bigr)^{T-1}\times[N]\times\mathcal{B}\bigl(\mathsf{X}\bigr)$
such that $a_{1:T}^{{\bf 1}}\in\{1\}^{T}$ then $I_{{\bf i}}(z_{1:T},a_{1:T},S)=0$
if $\mathbf{i}\notin\mathcal{J}_{T}$ and the result follows from
the definition of $\mathbb{E}_{\mathbf{1},x,\mathbf{i},y}^{N}\bigl(\cdot\bigr)$.
\end{proof}

\section{Proof of Lemma~\ref{lem:selfajoint_positive} \label{sec:Proof-of-Lemma_selfadjoint}}
\begin{proof}[Proof of Lemma~\ref{lem:selfajoint_positive}]
We can define the artificial joint distribution
\begin{equation}
\tilde{\pi}({\bf k},{\rm d}z_{1:T},a_{1:T-1}):=\frac{1}{N^{T}}\int_{\mathsf{X}}\pi({\rm d}x)\mathbb{P}_{{\bf k},x}\left(Z\in{\rm d}z_{1:T},A_{1:T-1}=a_{1:T-1}\right).\label{eq:definitionartificialdistribution}
\end{equation}
This admits as a marginal 
\[
\tilde{\pi}({\rm d}z_{1:T},a_{1:T-1})=\sum_{{\bf k}\in[N]^{T}}\frac{1}{N^{T}}\int_{\mathsf{X}}\mathbb{P}_{{\bf k},x}\left(Z\in{\rm d}z_{1:T},A_{1:T-1}=a_{1:T-1}\right)\pi({\rm d}x).
\]
It is straightforward to check that the conditional distribution of
${\bf K}$ given $(z_{1:T},a_{1:T-1})$ can be written
\[
\tilde{\pi}_{z_{1:T},a_{1:T-1}}({\bf k})=\frac{G_{T}(z_{T}^{k_{T}})}{\sum_{j=1}^{N}G_{T}(z_{T}^{j})}\prod_{t=2}^{T}\mathbb{I}\left\{ k_{t-1}=a_{t-1}^{k_{t}}\right\} .
\]
Indeed, we can define the Markov kernel $\tilde{P}_{N}$ 
\[
\tilde{P}_{N}(x,S):=\sum_{{\bf k}\in[N]^{T}}\frac{1}{N^{T}}\sum_{{\bf i}\in[N]^{T}}\int_{\mathsf{X}^{T}\times[N]^{T-1}}\mathbb{P}_{{\bf k},x}\left(Z_{1:T}\in{\rm d}z_{1:T},A_{1:T-1}=a_{1:T-1}\right)\pi_{z_{1:T},a_{1:T-1}}({\bf i})\mathbb{I}\left\{ z_{1:T}^{{\bf i}}\in S\right\} .
\]
The interpretation of this kernel is that it simulates from the conditional
distribution of $\left(Z_{1:T}^{-{\bf k}},A_{1:T-1}\right)$ given
$({\bf k},Z_{1:T}^{{\bf k}})$ and then draws ${\bf K}={\bf i}$ conditional
upon $\left(Z_{1:T},A_{1:T-1}\right)$, returning $Z_{1:T}^{{\bf i}}$.
This provides immediately that $\tilde{P}_{N}$ is a self-adjoint,
positive operator on $L^{2}(\mathsf{X},\pi)$ (see Appendix \ref{sec:Supplementary-material-for_minorization})
since
\begin{eqnarray*}
 &  & \left\langle \tilde{P}_{N}f,g\right\rangle _{\pi}\\
 & = & \int_{\mathsf{X}^{2}}g(x)f(y)\pi({\rm d}x)\tilde{P}_{N}(x,{\rm d}y)\\
 & = & \int_{\mathsf{X}^{2}}g(x)f(y)\pi({\rm d}x)\\
 &  & \sum_{{\bf k}\in[N]^{T}}\frac{1}{N^{T}}\sum_{{\bf i}\in[N]^{T}}\int_{\mathsf{X}^{T}\times[N]^{T-1}}\mathbb{P}_{{\bf k},x}\left(Z\in{\rm d}z_{1:T},A_{1:T-1}=a_{1:T-1}\right)\pi_{z_{1:T},a_{1:T-1}}({\bf i})\delta_{z_{1:T}^{{\bf i}}}({\rm d}y)\\
 & = & \sum_{{\bf k}\in[N]^{T}}\sum_{{\bf i}\in[N]^{T}}\int_{\mathsf{X}^{T}\times[N]^{T-1}}g(z_{1:T}^{{\bf k}})f(z_{1:T}^{{\bf i}})\tilde{\pi}({\bf k},{\rm d}z_{1:T},a_{1:T-1})\pi_{z_{1:T},a_{1:T-1}}({\bf i})\\
 & = & \sum_{{\bf k}\in[N]^{T}}\sum_{{\bf i}\in[N]^{T}}\int_{\mathsf{X}^{T}\times[N]^{T-1}}g(z_{1:T}^{{\bf k}})f(z_{1:T}^{{\bf i}})\tilde{\pi}({\rm d}z_{1:T},a_{1:T-1})\pi_{z_{1:T},a_{1:T-1}}({\bf k})\pi_{z_{1:T},a_{1:T-1}}({\bf i}).
\end{eqnarray*}
Self-adjointness of $\tilde{P}_{N}$ follows, since clearly $\left\langle \tilde{P}_{N}f,g\right\rangle _{\pi}=\left\langle f,\tilde{P}_{N}g\right\rangle _{\pi}$
and the positivity follows because
\[
\left\langle \tilde{P}_{N}f,f\right\rangle _{\pi}=\int_{\mathsf{X}^{T}\times[N]^{T-1}}\tilde{\pi}({\rm d}z_{1:T},a_{1:T-1})\pi_{z_{1:T},a_{1:T-1}}\left(\tilde{f}_{z_{1:T}}\right)^{2}\geq0,
\]
where $\tilde{f}_{z_{1:T}}({\bf k}):=f(z_{1:T}^{{\bf k}})$. \\
\\
In fact, when we implement the algorithm, we do not use $\tilde{P}_{N}$.
However, we have
\begin{eqnarray*}
P_{N}(x,S) & = & \mathbb{E}_{{\bf 1},x}\left[\sum_{{\bf i}\in[N]^{T}}I_{{\bf i}}(Z_{1:T},A_{1:T},S)\right]\\
 & = & \mathbb{E}_{{\bf k},x}\left[\sum_{{\bf i}\in[N]^{T}}I_{{\bf i}}(Z_{1:T},A_{1:T},S)\right],
\end{eqnarray*}
for any ${\bf k}\in[N]^{T}$ in the case of multinomial resampling.
 (see, e.g.,~\citep{chopin:singh:2013}), and as a consequence,
$P_{N}(x,S)=\tilde{P}_{N}(x,S)$. 
\end{proof}

\section{Supplementary material for Section~\ref{sec:Minorization-and-Dirichlet}
\label{sec:Supplementary-material-for_minorization}}

In the next proposition we gather general properties for generic reversible
Markov chains satisfying a uniform minorization condition for which
the minorization probability is precisely the invariant distribution
of the Markov chain. We suspect these results to be widely known,
but could not find a relevant reference. Let $L^{2}(\mathsf{E},\mu)$
and $L_{0}^{2}\bigl(\mathsf{E},\mu\bigr):=\bigl\{ f\in L^{2}(\mathsf{E},\mu):\mu\bigl(f\bigr)=0\bigr\}$
both endowed with the inner product defined for any $f,g\in L^{2}(\mathsf{E},\mu)$
as $\left\langle f,g\right\rangle _{\mu}:=\int_{\mathsf{E}}f(x)g(x)\mu({\rm d}x)$,
which yields the associated norm $\|f\|_{\mu}:=\sqrt{\left\langle f,f\right\rangle _{\mu}}$.
For any $f\in L^{2}(\mathsf{E},\mu)$ we define the Dirichlet forms

\begin{align*}
\mathcal{E}_{\Pi}(f) & :=\left\langle f,(I-\Pi)f\right\rangle _{\mu}\quad,
\end{align*}
where $I$ is the identity operator. The right and left spectral gaps
of a generic reversible Markov transition kernel have the following
variational representation 
\[
{\rm Gap}\left(\Pi\right):=\inf_{f\in L_{0}^{2}(\mathsf{E},\mu)}\frac{\mathcal{E}_{\Pi}(f)}{\|f\|_{\mu}^{2}}\;\text{and}\;{\rm Gap}_{L}\left(\Pi\right):=2-\sup_{f\in L_{0}^{2}(\mathsf{E},\mu)}\frac{\mathcal{E}_{\Pi}(f)}{\|f\|_{\mu}^{2}}\quad.
\]
The condition ${\rm Gap}\left(\Pi\right)>0$ and ${\rm Gap}_{L}\left(\Pi\right)>0$
implies geometric ergodicity of the Markov chain. It turns out that
convergence is in fact uniformly geometric in the following scenario.
\begin{prop}
\label{prop:boundsspectralgapandvarianceforP_N}Let $\mu$ be a probability
distribution on some measurable space $\bigl(\mathsf{E},\mathcal{B}\bigl(\mathsf{E}\bigr)\bigr)$
and let $\Pi:\mathsf{E}\times\mathcal{B}\bigl(\mathsf{E}\bigr)\rightarrow[0,1]$
be a Markov transition kernel reversible with respect to $\mu$.
Assume that there exists $\varepsilon>0$ such that for any $(x,A)\in\mathsf{E}\times\mathcal{B}\bigl(\mathsf{E}\bigr)$,
\[
\Pi(x,A)\geq\varepsilon\mu(A)\quad,
\]
then
\begin{enumerate}[label=(\alph*)]
\item \label{enu:Dirichlet-var}the Dirichlet forms satisfy for any $f\in L^{2}\bigl(\mathsf{E},\mu\bigr)$
\begin{align*}
\varepsilon\mathrm{var}_{\mu}(f)\leq\mathcal{E}_{\Pi}(f) & \le(2-\varepsilon)\mathrm{var_{\mu}(f)}\;,
\end{align*}

\item \label{enu:the-spectral-gaps}the spectral gaps are lower bounded
by
\[
\min\left\{ {\rm Gap}\bigl(\Pi\bigr),{\rm Gap}_{L}\left(\Pi\right)\right\} \geq\varepsilon,
\]

\item \label{enu:rudolfclassicalresult}for any probability distribution
$\nu\ll\mu$  and any $k\in\mathbb{N}$, 
\[
\|\nu\Pi^{k}\bigl(\cdot\bigr)-\mu\bigl(\cdot\bigr)\|_{L^{2}(\mathsf{E},\mu)}\leq\|\nu-\mu\|_{L^{2}(\mathsf{E},\mu)}(1-\varepsilon)^{k},
\]
 
\item \label{enu:TVvariationconvergenceKontoMeyn}for any probability distribution
$\nu\ll\mu$   we have
\[
\|\nu\Pi^{k}\bigl(\cdot\bigr)-\mu\bigl(\cdot\bigr)\|_{TV}\leq\frac{1}{2}\|\nu-\mu\|_{L^{2}(\mathsf{E},\mu)}\left(1-\varepsilon\right)^{k}\quad,
\]

\item \label{enu:uniformdoeblin}for any $x\in\mathsf{X}$,
\[
\|\delta_{x}\Pi^{k}\bigl(\cdot\bigr)-\mu\bigl(\cdot\bigr)\|_{TV}\leq\left(1-\varepsilon\right)^{k}\quad,
\]

\item and for any $f\in L^{2}\bigl(\mathsf{E},\mu\bigr)$
\[
\frac{\varepsilon}{2-\varepsilon}{\rm var}_{\mu}\bigl(f\bigr)\leq{\rm var}\bigl(f,\Pi\bigr)\leq\left(2\varepsilon^{-1}-1\right){\rm var}_{\mu}\bigl(f\bigr)\quad.
\]
and if $\Pi$ is a positive operator then naturally ${\rm var}\bigl(f,\Pi\bigr)\geq{\rm var}_{\mu}\bigl(f\bigr)$.
\end{enumerate}
\end{prop}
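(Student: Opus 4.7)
The plan is to exploit the classical decomposition of a uniformly minorized kernel as a mixture $\Pi = \varepsilon\mu + (1-\varepsilon) R$, where $R(x,A) := (1-\varepsilon)^{-1}\bigl(\Pi(x,A) - \varepsilon\mu(A)\bigr)$ is a $\mu$-reversible Markov kernel (reversibility of $R$ is inherited from that of $\Pi$ because $\mu(\mathrm{d}x)\mu(\mathrm{d}y)$ is symmetric). From the standard symmetrization $\mathcal{E}_{\Pi}(f) = \tfrac{1}{2}\int\int \mu(\mathrm{d}x)\Pi(x,\mathrm{d}y)(f(x)-f(y))^{2}$ one reads off
\[
\mathcal{E}_{\Pi}(f) \;=\; \varepsilon\,\mathrm{var}_{\mu}(f) + (1-\varepsilon)\,\mathcal{E}_{R}(f).
\]
Since $\mathcal{E}_{R}(f)\geq 0$ and, for any $\mu$-reversible kernel, $\mathcal{E}_{R}(f)\leq 2\,\mathrm{var}_{\mu}(f)$ (from $|\langle f-\mu(f), R(f-\mu(f))\rangle_{\mu}|\leq \|f-\mu(f)\|_{\mu}^{2}$ by Cauchy--Schwarz and the contraction of $R$ on $L^{2}(\mathsf{E},\mu)$), part~\ref{enu:Dirichlet-var} follows. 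Part~\ref{enu:the-spectral-gaps} is then immediate from the variational characterizations of ${\rm Gap}(\Pi)$ and ${\rm Gap}_{L}(\Pi)$.

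For~\ref{enu:rudolfclassicalresult}, combining the two-sided bound~\ref{enu:the-spectral-gaps} with reversibility yields that $\Pi$, viewed as a self-adjoint operator on $L^{2}(\mathsf{E},\mu)$, has spectrum contained in $[-(1-\varepsilon),1-\varepsilon]$ on the invariant subspace $L_{0}^{2}(\mathsf{E},\mu)$, hence operator norm $\|\Pi\|_{L_{0}^{2}\to L_{0}^{2}}\leq 1-\varepsilon$. Writing $h := \mathrm{d}\nu/\mathrm{d}\mu$ and using reversibility to identify $\mathrm{d}(\nu\Pi^{k})/\mathrm{d}\mu = \Pi^{k}h$, together with $\Pi^{k}1 = 1$, we get $\|\nu\Pi^{k}-\mu\|_{L^{2}(\mathsf{E},\mu)} = \|\Pi^{k}(h-1)\|_{\mu}\leq (1-\varepsilon)^{k}\|h-1\|_{\mu} = (1-\varepsilon)^{k}\|\nu-\mu\|_{L^{2}(\mathsf{E},\mu)}$. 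Part~\ref{enu:TVvariationconvergenceKontoMeyn} follows from~\ref{enu:rudolfclassicalresult} and the Cauchy--Schwarz bound $\|\tilde\nu\|_{TV}=\tfrac{1}{2}\mu(|\mathrm{d}\tilde\nu/\mathrm{d}\mu|)\leq \tfrac{1}{2}\|\tilde\nu\|_{L^{2}(\mathsf{E},\mu)}$ applied to $\tilde\nu = \nu\Pi^{k}-\mu$. For~\ref{enu:uniformdoeblin}, the minorization gives $\|\Pi(x,\cdot)-\mu\|_{TV}\leq 1-\varepsilon$ directly, and the Doeblin coupling bound $\|\delta_{x}\Pi^{k}-\mu\|_{TV}\leq(1-\varepsilon)^{k}$ is then a textbook one-line induction.

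For the asymptotic variance statement, I would invoke the Kipnis--Varadhan representation for $\mu$-reversible kernels: for any centered $f\in L_{0}^{2}(\mathsf{E},\mu)$,
\[
\mathrm{var}(f,\Pi) \;=\; \int_{-1}^{1}\frac{1+\lambda}{1-\lambda}\,\mathrm{d}E_{f}(\lambda),
\]
where $E_{f}$ is the spectral measure of $\Pi$ associated with $f$, with $E_{f}([-1,1]) = \|f\|_{\mu}^{2}$. By~\ref{enu:the-spectral-gaps}, the support of $E_{f}$ lies in $[-(1-\varepsilon),1-\varepsilon]$; monotonicity of $\lambda\mapsto(1+\lambda)/(1-\lambda)$ then gives the two-sided sandwich $\frac{\varepsilon}{2-\varepsilon}\leq (1+\lambda)/(1-\lambda)\leq \frac{2-\varepsilon}{\varepsilon} = 2\varepsilon^{-1}-1$ on this support, from which both asymptotic variance bounds follow after reducing to $f-\mu(f)$. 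The final positive-operator remark is immediate: positivity localizes the spectrum to $[0,1-\varepsilon]$, so $(1+\lambda)/(1-\lambda)\geq 1$ and $\mathrm{var}(f,\Pi)\geq \mathrm{var}_{\mu}(f)$. The only mildly delicate point is the justification of the Kipnis--Varadhan identity when $\mathrm{var}(f,\Pi)$ is understood as the limit in the Proposition's statement; this is standard under a spectral gap (the integrand $(1+\lambda)/(1-\lambda)$ is bounded on the support of $E_{f}$), so no truncation or martingale approximation is required. The only potentially non-routine point in the whole argument is the verification that $R$ is itself $\mu$-reversible, which however drops out of the definition once one writes $\mu(\mathrm{d}x)\Pi(x,\mathrm{d}y)-\varepsilon\mu(\mathrm{d}x)\mu(\mathrm{d}y)$ in symmetric form.
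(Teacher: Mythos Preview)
Your proof is correct and follows essentially the same route as the paper for parts~\ref{enu:Dirichlet-var}--\ref{enu:uniformdoeblin}: both start from the residual-kernel decomposition $\Pi=\varepsilon\mu+(1-\varepsilon)R$, read off the Dirichlet form bounds, and deduce the spectral gap and convergence statements (the paper delegates \ref{enu:rudolfclassicalresult}--\ref{enu:TVvariationconvergenceKontoMeyn} to references of Rudolf and Kontoyiannis--Meyn, while you spell out the operator-norm and Cauchy--Schwarz steps).

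The only genuine difference is in the asymptotic variance bound. You invoke the Kipnis--Varadhan spectral representation $\mathrm{var}(f,\Pi)=\int\frac{1+\lambda}{1-\lambda}\,\mathrm{d}E_f(\lambda)$ and bound the integrand pointwise on the support $[-(1-\varepsilon),1-\varepsilon]$. The paper instead proves a separate comparison lemma (Lemma~\ref{lem:minorizationdirichletboundgapvariance}): if $\mathcal{E}_{\Pi_2}\geq\varrho\,\mathcal{E}_{\Pi_1}$ then $\mathrm{var}(f,\Pi_2)\leq(\varrho^{-1}-1)\mathrm{var}_\mu(f)+\varrho^{-1}\mathrm{var}(f,\Pi_1)$, via the variational formula $\mathrm{var}(f,\Pi)=2\sup_{g}\{2\langle f,g\rangle_\mu-\mathcal{E}_\Pi(g)\}-\|f\|_\mu^2$, and then applies it with $\Pi_1$ the i.i.d.\ kernel. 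Your spectral argument is more direct for this proposition; the paper's lemma, however, is formulated abstractly for two arbitrary reversible kernels and is reused verbatim in Theorem~\ref{thm:generalMWGresult} to transfer Gibbs-sampler variance bounds to the particle Gibbs sampler, which the spectral-measure route would not give as cleanly. The paper itself remarks that the resulting bounds ``coincide in this case with the `Kipnis--Varadhan' upper bound'', so the two approaches are knowingly equivalent here.
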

\begin{proof}[Proof of Proposition~\ref{prop:boundsspectralgapandvarianceforP_N}]
First, from the minorization condition one can write $\Pi(x,{\rm d}y)=\varepsilon\mu({\rm d}y)+(1-\varepsilon)R_{\Pi,\varepsilon}(x,{\rm d}y)$,
where $R_{\Pi,\varepsilon}(x,A):=\frac{\Pi(x,A)-\varepsilon\mu(A)}{1-\varepsilon}$
is $\mu-$invariant. Now for $f\in L_{0}^{2}\big(\mathsf{E},\mu\big)$
\begin{eqnarray*}
\left\langle f,\Pi f\right\rangle _{\mu} & = & \varepsilon\left\langle f,\mu(f)\right\rangle _{\mu}+(1-\varepsilon)\left\langle f,R_{\Pi,\varepsilon}f\right\rangle _{\mu}\\
 & = & (1-\varepsilon)\left\langle f,R_{\Pi,\varepsilon}f\right\rangle _{\mu}
\end{eqnarray*}
and therefore with $\mathcal{E}_{\mu}(f)=\left\langle f,f\right\rangle _{\mu}$
the Dirichlet form of the (reversible) ``independent samples'' Markov
chain we deduce 
\begin{align*}
\varepsilon\mathcal{E}_{\mu}(f)\leq\mathcal{E}_{\Pi}(f) & \leq(2-\varepsilon)\left\langle f,f\right\rangle _{\mu}=(2-\varepsilon)\mathcal{E}_{\mu}(f)\;,
\end{align*}
which implies \ref{enu:Dirichlet-var}. The bounds on the spectral
gaps \ref{enu:the-spectral-gaps} follow immediately and the results
in points~\ref{enu:rudolfclassicalresult} and~\ref{enu:TVvariationconvergenceKontoMeyn}
are now a consequence of the resulting property of the spectrum and
e.g.~\citep[Proposition 3.12, p. 44]{rudolf2011explicit} and~\citep[Proposition 1.5]{kontoyiannis:meyn:2012}.
Result~\ref{enu:uniformdoeblin} is due to Doeblin~\citep{lindvall},
while the two bounds on the asymptotic variance are direct consequences
of Lemma~\ref{lem:minorizationdirichletboundgapvariance} and coincide
in this case with the ``Kipnis--Varadhan'' upper bound \citep{kipnis1986central}.
\end{proof}
\begin{lem}
\label{lem:minorizationdirichletboundgapvariance}Let $\Pi_{1},\Pi_{2}$
be reversible with respect to $\mu$ and assume that there exists
$\varrho\ge0$ such that for any $f\in L_{0}^{2}\bigl(\mathsf{E},\mu\bigr)$
\[
\mathcal{E}_{\Pi_{2}}\bigl(f\bigr)\geq\varrho\mathcal{E}_{\Pi_{1}}\bigl(f\bigr)\quad,
\]
then
\[
{\rm Gap}\bigl(\Pi_{2}\bigr)\geq\varrho{\rm Gap}\bigl(\Pi_{1}\bigr)\quad,
\]
and if $\varrho>0$
\[
{\rm var}\bigl(f,\Pi_{2}\bigr)\leq(\varrho^{-1}-1){\rm var}_{\pi}(f)+\varrho^{-1}{\rm var}\left(f,\Pi_{1}\right)\quad.
\]
\end{lem}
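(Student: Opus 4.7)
The spectral-gap inequality is immediate from the variational characterisation of ${\rm Gap}$: dividing the hypothesis $\mathcal{E}_{\Pi_{2}}(f)\geq\varrho\,\mathcal{E}_{\Pi_{1}}(f)$ by $\|f\|_{\mu}^{2}$ and passing to the infimum over $f\in L_{0}^{2}\bigl(\mathsf{E},\mu\bigr)$ yields ${\rm Gap}(\Pi_{2})\geq\varrho\,{\rm Gap}(\Pi_{1})$ directly.

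For the asymptotic-variance bound, my plan is to lift the Dirichlet-form inequality to the operator level and combine it with a resolvent representation of the variance. Writing $\mathcal{E}_{\Pi}(f)=\langle f,(I-\Pi)f\rangle_{\mu}$, the hypothesis is precisely the quadratic-form inequality
\[
I-\Pi_{2}\;\succeq\;\varrho\,(I-\Pi_{1})
\]
on $L_{0}^{2}\bigl(\mathsf{E},\mu\bigr)$; both sides are non-negative self-adjoint, as each $\Pi_{i}$ is a self-adjoint contraction with spectrum in $[-1,1]$. Since $t\mapsto-1/t$ is operator monotone on the positive cone (L\"owner), inversion is antitone there, so, interpreting $(I-\Pi_i)^{-1}$ through the Borel functional calculus, one obtains
\[
(I-\Pi_{2})^{-1}\;\preceq\;\varrho^{-1}\,(I-\Pi_{1})^{-1}
\]
as quadratic forms on $L_{0}^{2}\bigl(\mathsf{E},\mu\bigr)$ taking values in $[0,\infty]$. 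Combining this with the standard spectral-theoretic identity
\[
{\rm var}(f,\Pi)+\|f\|_{\mu}^{2}\;=\;2\,\langle f,(I-\Pi)^{-1}f\rangle_{\mu}\,,\qquad f\in L_{0}^{2}\bigl(\mathsf{E},\mu\bigr)\,,
\]
valid through the spectral decomposition of the self-adjoint $\Pi$ (with both sides in $[0,\infty]$), one deduces ${\rm var}(f,\Pi_{2})+\|f\|_{\mu}^{2}\leq\varrho^{-1}\bigl({\rm var}(f,\Pi_{1})+\|f\|_{\mu}^{2}\bigr)$. Rearranging, using $\|f\|_{\mu}^{2}={\rm var}_{\mu}(f)$ for $f\in L_{0}^{2}$, and extending to $f\in L^{2}\bigl(\mathsf{E},\mu\bigr)$ by centring at $\mu(f)$ yields the claim.

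The main technical hurdle will be rigorously justifying the inversion step when $1$ lies in the spectrum of some $\Pi_{i}$, so that $(I-\Pi_{i})^{-1}$ is unbounded. My plan is to interpret every inverse and associated quadratic form via the Borel functional calculus on $[-1,1]$, so that the forms $\langle f,(I-\Pi_{i})^{-1}f\rangle_{\mu}$ and the variances ${\rm var}(f,\Pi_{i})$ are defined consistently in $[0,\infty]$ and the displayed inequalities remain meaningful; alternatively, one may approximate by the bounded damped resolvents $(I-\alpha\Pi_{i})^{-1}=\sum_{k\geq0}\alpha^{k}\Pi_{i}^{k}$ with $\alpha\in(0,1)$, establish the analogous bound for the Abel-summed variances, and pass to $\alpha\uparrow1$ using the spectral representation ${\rm var}(f,\Pi)=\int_{-1}^{1}\frac{1+\lambda}{1-\lambda}\,\mathrm{d}E_{f}^{\Pi}(\lambda)$ together with monotone/Fatou arguments applied to the non-negative spectral measures $E_{f}^{\Pi_{i}}$.
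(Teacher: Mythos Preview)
Your argument is correct and arrives at the same inequality as the paper, but by a different route. The paper uses the variational (Kipnis--Varadhan type) representation
\[
{\rm var}(f,\Pi)=2\Bigl[\sup_{g\in L_{0}^{2}(\mathsf{E},\mu)}2\langle f,g\rangle_{\mu}-\mathcal{E}_{\Pi}(g)\Bigr]-\|f\|_{\mu}^{2}\,,
\]
and simply notes that $-\mathcal{E}_{\Pi_{2}}(g)\leq-\varrho\,\mathcal{E}_{\Pi_{1}}(g)$ passes under the supremum, after which the rescaling $g\mapsto\varrho g$ extracts the factor $\varrho^{-1}$. This is more elementary than your approach: no L\"owner monotonicity and no functional calculus are invoked, and the domain issues you flag never arise because the supremum is naturally $[0,\infty]$-valued. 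The two routes are of course intimately related, since the supremum above equals $\langle f,(I-\Pi)^{-1}f\rangle_{\mu}$ and the variational formula is itself one standard proof of antitonicity of inversion on the positive cone. Your approach has the virtue of stating things at the operator level; the paper's buys brevity and sidesteps entirely the technical hurdle of your last paragraph.

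One caveat on your alternative regularisation: the damped ordering $I-\alpha\Pi_{2}\geq\varrho(I-\alpha\Pi_{1})$ is equivalent to $(1-\varrho)(1-\alpha)I+\alpha\bigl[(I-\Pi_{2})-\varrho(I-\Pi_{1})\bigr]\geq0$, which need not hold when $\varrho>1$ (a case not excluded by the lemma), so the $\alpha$-damping route does not go through in full generality as written. Your primary route via the Borel functional calculus---or, equivalently, regularising by $(I-\Pi_{i})+\epsilon I$ and letting $\epsilon\downarrow0$---avoids this and is sound.
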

\begin{proof}
The first result is straightforward. For the second result, first
notice that
\begin{align*}
\sup_{g\in L_{0}^{2}\bigl(\mathsf{E},\mu\bigr)}2\bigl\langle f,g\bigr\rangle_{\mu}-\mathcal{E}_{\Pi_{2}}(g) & \leq\sup_{g\in L_{0}^{2}\bigl(\mathsf{E},\mu\bigr)}2\bigl\langle f,g\bigr\rangle_{\mu}-\varrho\mathcal{E}_{\Pi_{1}}(g)\\
 & =\varrho^{-1}\Big(\sup_{g\in L_{0}^{2}\bigl(\mathsf{E},\mu\bigr)}2\bigl\langle f,\varrho g\bigr\rangle_{\mu}-\mathcal{E}_{\Pi_{1}}(\varrho g)\Big)\\
 & =\varrho^{-1}\Big(\sup_{g\in L_{0}^{2}\bigl(\mathsf{E},\mu\bigr)}2\bigl\langle f,g\bigr\rangle_{\mu}-\mathcal{E}_{\Pi_{1}}(g)\Big)\quad,
\end{align*}
and since ${\rm var}\bigl(f,\Pi\bigr)=2\bigl[\sup_{g\in L_{0}^{2}\bigl(\mathsf{E},\mu\bigr)}2\bigl\langle f,g\bigr\rangle_{\mu}-\mathcal{E}_{\Pi}(g)\bigr]-\|f\|_{\mu}^{2}$
we conclude that
\[
{\rm var}\bigl(f,\Pi_{2}\bigr)\leq(\varrho^{-1}-1){\rm var}_{\mu}(f)+\varrho^{-1}{\rm var}\left(f,\Pi_{1}\right)\quad.
\]

\end{proof}

\section{Supplementary material for Section~\ref{sec:Estimates-of-the} \label{sec:Supplementary-quantitative}}

The proof of Proposition~\ref{prop:doubleconditionalSMC} relies
on the following technical lemma, and is given after this intermediate
result.
\begin{lem}
\label{lem:backwardinduction}Let $x,y\in\mathsf{X}$, then,
\begin{enumerate}[label=(\alph*)]
\item \label{enu:condexpect_a}\textup{for any $t\geq2$, $z_{1:t-1}\in\mathsf{Z}^{N(t-1)}$
such that $(z_{1:t-\text{1}}^{1},z_{1:t-\text{1}}^{2})=(x_{1:t-1},y_{1:t-1})$
and $f_{t}:\mathsf{Z}\rightarrow\mathbb{R}$ we have 
\begin{multline*}
\mathbb{E}_{\mathbf{1},x,\mathbf{2},y}^{N}\left[\sum_{m=1}^{N}f_{t}(Z_{t}^{m})\,\Bigg|\,Z_{t-1}=z_{t-1}\right]\\
=f_{t}(x_{t})+f_{t}(y_{t})+\frac{N-2}{\sum_{l=1}^{N}G_{t-1}(z_{t-1}^{l})}\sum_{k=1}^{N}Q_{t-1,t}(f_{t})(z_{t-1}^{k})\quad,
\end{multline*}
 }
\item \label{enu:condexpect_b}for any $k=1,\dots,T-1$, any \textup{$z_{1:T-k}\in\mathsf{Z}^{N(T-k)}$
such that $(z_{1:T-k}^{1},z_{1:T-\text{k}}^{2})=(x_{1:T-k},y_{1:T-k})$}
\[
\mathbb{E}_{\mathbf{1},x,\mathbf{2},y}^{N}\left[\prod_{t=T-k+1}^{T}\sum_{j=1}^{N}G_{t}(Z_{t}^{j})\,\Bigg|\,Z_{T-k}=z_{T-k}\right]=A_{T-k}+B_{T-k}
\]
where 
\begin{align*}
A_{T-k}: & =\sum_{s=1}^{k}(N-2)^{k-s}\sum_{\mathbf{i}\in\mathcal{I}_{k,s}}\left[G_{T-k+1,i_{\text{1}}}\left(x_{T-k+1}\right)+G_{T-k+1,i_{\text{1}}}(y_{T-k+1})\right]C_{k,s}\bigl(\mathbf{i},x,y\bigr)\quad,\\
B_{T-k}: & =\frac{N-2}{\sum_{l=1}^{N}G_{T-k}(z_{T-k}^{l})}\bigg(\sum_{s=1}^{k}(N-2)^{k-s}\sum_{\mathbf{i}\in\mathcal{I}_{k,s}}\sum_{r=1}^{N}G_{T-k,i_{\text{1}}}\bigl(z_{T-k}^{r}\bigr)C_{k,s}\bigl(\mathbf{i},x,y\bigr)\bigg)\quad,
\end{align*}
and $\mathcal{I}_{k,s}$ and $C_{k,s}$ are as in Proposition~\ref{prop:doubleconditionalSMC}.
\end{enumerate}
\end{lem}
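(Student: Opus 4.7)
\textbf{Proof proposal for Lemma~\ref{lem:backwardinduction}.} Part~\ref{enu:condexpect_a} will follow by a direct computation from the definition of the c\textsuperscript{2}SMC measure $\mathbb{P}_{\mathbf{1},x,\mathbf{2},y}^{N}$. Under this law, given $Z_{t-1}=z_{t-1}$ with $z_{t-1}^{1}=x_{t-1}$ and $z_{t-1}^{2}=y_{t-1}$, the coordinates $Z_{t}^{1}=x_{t}$ and $Z_{t}^{2}=y_{t}$ are deterministic, while for $i\in\{3,\ldots,N\}$ the $Z_{t}^{i}$ are i.i.d. from the mixture $\sum_{l=1}^{N}\frac{G_{t-1}(z_{t-1}^{l})}{\sum_{j}G_{t-1}(z_{t-1}^{j})}M_{t}(z_{t-1}^{l},\cdot)$. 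Summing $f_{t}$ over the $N$ particles, using that $\int M_{t}(z_{t-1}^{l},\mathrm{d}z)f_{t}(z)=M_{t}(f_{t})(z_{t-1}^{l})$ and the elementary identity $G_{t-1}(z)M_{t}(f_{t})(z)=Q_{t-1,t}(f_{t})(z)$, yields the stated formula.

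For part~\ref{enu:condexpect_b} I will proceed by backward induction on $k$. The base case $k=1$ follows at once by applying part~\ref{enu:condexpect_a} with $t=T$ and $f_{T}=G_{T}$, after checking that $\mathcal{I}_{1,1}=\{(T+1)\}$, $C_{1,1}=1$, $G_{T,T+1}(z)=G_{T}(z)$ and $G_{T-1,T+1}(z)=Q_{T-1,T}(G_{T})(z)$; these match $A_{T-1}$ and $B_{T-1}$ exactly. For the inductive step, assuming the formula at level $k$, I will use the tower property to condition on $Z_{T-k}$:
\[
\mathbb{E}_{\mathbf{1},x,\mathbf{2},y}^{N}\!\left[\prod_{t=T-k}^{T}\sum_{j=1}^{N}G_{t}(Z_{t}^{j})\,\Bigg|\,Z_{T-k-1}\right]=\mathbb{E}_{\mathbf{1},x,\mathbf{2},y}^{N}\!\left[\Bigl(\sum_{j}G_{T-k}(Z_{T-k}^{j})\Bigr)\bigl(A_{T-k}+B_{T-k}(Z_{T-k})\bigr)\,\Bigg|\,Z_{T-k-1}\right].
\]
Since $A_{T-k}$ depends only on $(x,y)$ and the denominator in $B_{T-k}$ cancels with the prefactor $\sum_{j}G_{T-k}(Z_{T-k}^{j})$, the right-hand side splits into a sum of terms of the form $A_{T-k}\cdot\mathbb{E}[\sum_{j}G_{T-k}(Z_{T-k}^{j})\mid Z_{T-k-1}]$ and $(N-2)\sum_{s,\mathbf{i}}(N-2)^{k-s}C_{k,s}\,\mathbb{E}[\sum_{r}G_{T-k,i_{1}}(Z_{T-k}^{r})\mid Z_{T-k-1}]$. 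Applying part~\ref{enu:condexpect_a} to each inner expectation (with $f_{T-k}=G_{T-k}$ and $f_{T-k}=G_{T-k,i_{1}}$ respectively) and using the tower identities $Q_{T-k-1,T-k}(G_{T-k})=G_{T-k-1,T-k+1}$ and $Q_{T-k-1,T-k}(G_{T-k,i_{1}})=G_{T-k-1,i_{1}}$ turns everything into expressions built from $G_{p,q}$, $C_{k,s}$ and $(N-2)$-powers.

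The heart of the argument is then a combinatorial rearrangement: splitting $\mathcal{I}_{k+1,s}$ according to whether $j_{1}=T-k+1$ or $j_{1}\ge T-k+2$. In the first case $(j_{2},\ldots,j_{s})\in\mathcal{I}_{k,s-1}$ and the identity $C_{k+1,s}(\mathbf{j},x,y)=[G_{T-k+1,i_{1}}(x_{T-k+1})+G_{T-k+1,i_{1}}(y_{T-k+1})]\,C_{k,s-1}((j_{2},\ldots,j_{s}),x,y)$ together with $G_{T-k,T-k+1}(z)=G_{T-k}(z)$ recovers the $A_{T-k}\cdot[G_{T-k}(x_{T-k})+G_{T-k}(y_{T-k})]$ contribution to $A_{T-k-1}$ and the $A_{T-k}\cdot\frac{N-2}{\sum_{l}G_{T-k-1}(z^{l})}\sum_{r}G_{T-k-1,T-k+1}(z^{r})$ contribution to $B_{T-k-1}$. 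In the second case $\mathbf{j}\in\mathcal{I}_{k,s}$ and $C_{k+1,s}=C_{k,s}$, which absorbs the remaining $G_{T-k,i_{1}}$ and $G_{T-k-1,i_{1}}$ terms into $A_{T-k-1}$ and $B_{T-k-1}$. The main obstacle will be precisely this bookkeeping step: one has to check that the four contributions (two from $A_{T-k}$, two from the $B_{T-k}$-piece) partition $A_{T-k-1}+B_{T-k-1}$ correctly; once the two-case decomposition of $\mathcal{I}_{k+1,s}$ is carried out and the factors of $(N-2)$ are tracked, matching is forced by the identifications of $Q_{p,q}$ above.
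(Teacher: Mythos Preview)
Your proposal is correct and follows essentially the same route as the paper: part~(a) is immediate from the structure of the c\textsuperscript{2}SMC transition, and part~(b) is a backward induction on $k$ using the tower property and repeated application of part~(a), followed by a combinatorial regrouping of the resulting $G_{p,q}$-terms. The only cosmetic difference is that you organize the bookkeeping step by splitting $\mathcal{I}_{k+1,s}$ according to whether $j_{1}=T-k+1$ or $j_{1}>T-k+1$, whereas the paper works in the opposite direction, starting from the four $k$-level pieces and using the change of variables $j_{m}=i_{m-1}$, $s'=s+1$ (together with the observation that $\mathcal{I}_{k,k+1}=\emptyset$) to merge them into $A_{T-(k+1)}$ and $B_{T-(k+1)}$; the two presentations are dual versions of the same identity.
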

\begin{proof}[Proof of Lemma~\ref{lem:backwardinduction}]
The property in~\ref{enu:condexpect_a} is immediate from the linearity
of the expectation and the definition of the process. We now prove
property~\ref{enu:condexpect_b} by induction on $k=1,\ldots,T-1$.
In order to alleviate notation we let $G_{p,q}^{i}:=G_{p,q}\bigl(Z_{p}^{i}\bigr)$
when found inside an expectation and $G_{p,q}^{i}:=G_{p,q}\bigl(z_{p}^{i}\bigr)$
otherwise, $G_{p,q}^{1+2}:=G_{p,q}\bigl(x_{p}\bigr)+G_{p,q}\bigl(y_{p}\bigr)$
and $C_{k,s}(\mathbf{i}):=C_{k,s}(\mathbf{i},x,y)$. The case $k=1$
follows from~\ref{enu:condexpect_a} with $t=T$ by observing that
$\mathcal{I}_{1,1}=\bigl\{ T+1\bigr\}$, $C_{1,1}\bigl(\mathbf{i},x,y\bigr)=1$
and that $G_{T-1,T+1}^{r}=Q_{T-1,T}\bigl(G_{T}\bigr)\bigl(z_{T-1}^{r}\bigr)$
: 
\begin{align*}
\mathbb{E}_{\mathbf{1},x,\mathbf{2},y}^{N} & \left[\sum_{m=1}^{N}G_{T}^{m}\,\Bigg|\,Z_{T-1}=z_{T-1}\right]\\
 & =G_{T}(x_{T})+G_{T}(y_{T})+\frac{N-2}{\sum_{l=1}^{N}G_{T-1}(z_{T-1}^{l})}\sum_{k=1}^{N}Q_{T-1,T}(G_{T})(z_{T-1}^{k})\\
 & =G_{T}(x_{T})+G_{T}(y_{T})+\frac{N-2}{\sum_{l=1}^{N}G_{T-1}(z_{T-1}^{l})}\sum_{k=1}^{N}G_{T-1,T+1}^{k}\quad.
\end{align*}
Now we assume the property true for some $k\in\{1,\ldots,T-2\}$ and
establish it for $k+1$. We have
\begin{align*}
\mathbb{E}_{\mathbf{1},x,\mathbf{2},y}^{N}\left[\prod_{t=T-k}^{T}\sum_{j=1}^{N}G_{t}^{j}\,\Bigg|\,Z_{T-k-1}=z_{T-k-1}\right] & =A+B\quad,
\end{align*}
with
\begin{align*}
A:= & \mathbb{E}_{\mathbf{1},x,\mathbf{2},y}^{N}\left[A_{T-k}\sum_{j=1}^{N}G_{T-k}^{j}\,\Bigg|\,Z_{T-k-1}=z_{T-k-1}\right]\\
B:= & \mathbb{E}_{\mathbf{1},x,\mathbf{2},y}^{N}\left[B_{T-k}\sum_{j=1}^{N}G_{T-k}^{j}\,\Bigg|\,Z_{T-k-1}=z_{T-k-1}\right]\quad,
\end{align*}
and we deal with the two terms separately. Observe that $A_{T-k}$
only depends on $x_{T-k+1:T}$ and $y_{T-k+1:T}$, then by application
of the first result of the lemma we obtain
\[
A=A_{T-k}\,\left(G_{T-k}^{1+2}+\frac{N-2}{\sum_{l=1}^{N}G_{T-k-1}^{l}}\sum_{l=1}^{N}G_{T-k-1,T-k+1}^{l}\right)
\]
and, noting that $C_{k,s}(\mathbf{i})$ depends on $x_{T-k+2:T},y_{T-k+2:T}$
only 
\begin{align*}
B & =\sum_{s=1}^{k}(N-2)^{k+1-s}\sum_{\mathcal{I}_{k,s}}\mathbb{E}_{\mathbf{1},x,\mathbf{2},y}^{N}\left[\sum_{r=1}^{N}G_{T-k,i_{\text{1}}}^{r}\,\Bigg|\,Z_{T-k-1}=z_{T-k-1}\right]C_{k,s}\bigl(\mathbf{i}\bigr)\\
 & =\sum_{s=1}^{k}(N-2)^{k+1-s}\sum_{\mathcal{I}_{k,s}}\bigg[G_{T-k,i_{\text{1}}}^{1+2}+\frac{N-2}{\sum_{l=1}^{N}G_{T-k-1}^{l}}\sum_{r=1}^{N}G_{T-k-1,i_{\text{1}}}^{r}\bigg]C_{k,s}\bigl(\mathbf{i}\bigr)\quad,
\end{align*}
where we have again applied the first result of the lemma. Consequently
we can group the terms as follows
\begin{multline}
A+B=A_{T-k}G_{T-k}^{1+2}+\sum_{s=1}^{k}(N-2)^{k+1-s}\sum_{\mathcal{I}_{k,s}}G_{T-k,i_{\text{1}}}^{1+2}C_{k,s}\bigl(\mathbf{i}\bigr)\\
+\frac{N-2}{\sum_{l=1}^{N}G_{T-k-1}^{l}}\left[A_{T-k}\sum_{l=1}^{N}G_{T-k-1,T-k+1}^{l}+\sum_{s=1}^{k}(N-2)^{k+1-s}\sum_{\mathcal{I}_{k,s}}\bigg(\sum_{r=1}^{N}G_{T-k-1,i_{\text{1}}}^{r}\bigg)C_{k,s}\bigl(\mathbf{i}\bigr)\right]\quad.\label{eq:A+B}
\end{multline}
Now we first focus on the first term on on the RHS on the first line
(with the sum now written in extension in order to help and we note
that we do not use the double indexing $i_{j}^{s}$ in order to keep
notation simple),
\begin{align*}
A_{T-k}G_{T-k}^{1+2} & =\sum_{s=1}^{k}(N-2)^{k-s}G_{T-k}^{1+2}\sum_{T-k+1<i_{1}\cdots<i_{s-1}<i_{s}=T+1}G_{T-k+1,i_{\text{1}}}^{1+2}\prod_{m=1}^{s-1}G_{i_{m},i_{m+1}}^{1+2}\\
 & =\sum_{s=1}^{k}(N-2)^{k-s}\sum_{i_{0}=T-k+1<i_{1}\cdots<i_{s-1}<i_{s}=T+1}G_{T-k,i_{\text{0}}}^{1+2}\prod_{m=0}^{s-1}G_{i_{m},i_{m+1}}^{1+2}\\
 & =\sum_{s'=2}^{k+1}(N-2)^{k+1-s'}\sum_{j_{1}=T-k+1<j_{2}\cdots<j_{s'-1}<j_{s'}=T+1}G_{T-k,j_{1}}^{1+2}\prod_{m=1}^{s'-1}G_{j_{m},j_{m+1}}^{1+2}\quad,
\end{align*}
where we have used the following changes of variables: $j_{m}=i_{m-1}$
for $m=1,\ldots,s+1$ followed by $s=s'-1$. Note that we can extend
the sum in order to include the term $s'=1$, since we cannot have
$j_{1}=T+1\neq T-k+1=j_{1}$. We examine the second term on the RHS
of the first line of (\ref{eq:A+B}) 
\[
\sum_{s=1}^{k}(N-2)^{k+1-s}\sum_{T-k+1<i_{1}<\cdots<i_{s-1}<i_{s}=T+1}G_{T-k,i_{\text{1}}}^{1+2}\prod_{m=1}^{s-1}G_{i_{m},i_{m+1}}^{1+2}\quad,
\]
and we notice that we can extend the sum in order to include the term
$s=k+1$ because $\sharp\left\{ T-k+2,\ldots,T+1\right\} =k$, which
implies that $\mathcal{I}_{k,k+1}=\emptyset$. Consequently we deduce
that 
\[
A_{T-k}G_{T-k}^{1+2}+\sum_{s=1}^{k}(N-2)^{k+1-s}\sum_{\mathcal{I}_{k,s}}G_{T-k,i_{\text{1}}}^{1+2}C_{k,s}\bigl(\mathbf{i}\bigr)=A_{T-(k+1)}\quad.
\]
We now turn to the second line of (\ref{eq:A+B}) and examine the
two terms within the brackets and use similar ideas. First we have
\begin{align*}
\sum_{l=1}^{N} & G_{T-k-1,T-k+1}^{l}A_{T-k}\\
 & =\sum_{s=1}^{k}(N-2)^{k-s}\sum_{l=1}^{N}G_{T-k-1,T-k+1}^{l}\sum_{T-k+1<i_{1}\cdots<i_{s-1}<i_{s}=T+1}G_{T-k+1,i_{\text{1}}}^{1+2}\prod_{m=1}^{s-1}G_{i_{m},i_{m+1}}^{1+2}\\
 & =\sum_{s=1}^{k}(N-2)^{k-s}\sum_{i_{0}=T-k+1<i_{1}\cdots<i_{s-1}<i_{s}=T+1}\sum_{l=1}^{N}G_{T-k-1,i_{0}}^{l}\prod_{m=0}^{s-1}G_{i_{m},i_{m+1}}^{1+2}\\
 & =\sum_{s=2}^{k+1}(N-2)^{k+1-s}\sum_{i_{1}=T-k+1<i_{2}\cdots<i_{s-1}<i_{s}=T+1}\sum_{l=1}^{N}G_{T-k-1,i_{1}}^{l}\prod_{m=1}^{s-1}G_{i_{m},i_{m+1}}^{1+2}
\end{align*}
and the other term is, in extension,
\[
\sum_{s=1}^{k}(N-2)^{k+1-s}\sum_{T-k+1<i_{1}\cdots<i_{s-1}<i_{s}=T+1}\bigl[\sum_{r=1}^{N}G_{T-k-1,i_{\text{1}}}^{r}\bigr]\prod_{m=1}^{s-1}G_{i_{m},i_{m+1}}^{1+2}\quad.
\]
We therefore conclude that
\begin{align*}
\frac{N-2}{\sum_{l=1}^{N}G_{T-k-1}^{l}}\left[A_{T-k}\sum_{l=1}^{N}G_{T-k-1,T-k+1}^{l}+\sum_{s=1}^{k}(N-2)^{k+1-s}\sum_{\mathcal{I}_{k,s}}\bigl[\sum_{r=1}^{N}G_{T-k-1,i_{\text{1}}}^{r}\bigr]C_{k,s}\bigl(\mathbf{i}\bigr)\right]\\
=B_{T-(k+1)}\;,
\end{align*}
which finishes the proof.
\end{proof}

\begin{proof}[Proof of Proposition~\ref{prop:doubleconditionalSMC}]
We start with the second result of Lemma~\ref{lem:backwardinduction}
for $k=T-1$ and we proceed as in the beginning of the proof of that
lemma, using similar notation and arguments. Here we have however
\begin{align*}
A & =A_{1}\times\left(G_{1}^{1+2}+(N-2)G_{0,2}\right)\\
 & =\sum_{s=2}^{T}(N-2)^{T-s}\sum_{i_{1}=2<i_{2}\cdots<i_{s-1}<i_{s}=T+1}G_{1,i_{1}}^{1+2}\prod_{m=1}^{s-1}G_{i_{m},i_{m+1}}^{1+2}\\
 & +(N-2)\sum_{s=2}^{T}(N-2)^{T-s}\sum_{i_{1}=2<i_{2}<\cdots<i_{s-1}<i_{s}=T+1}G_{0,i_{\text{1}}}\prod_{m=1}^{s-1}G_{i_{m},i_{m+1}}^{1+2}\quad,
\end{align*}
and 
\begin{align*}
B & =\sum_{s=1}^{T-1}(N-2)^{T-s}\sum_{\mathcal{I}_{T-1,s}}\bigl[G_{1,i_{\text{1}}}^{1+2}+(N-2)G_{0,i_{\text{1}}}\bigr]\prod_{m=1}^{s-1}G_{i_{m},i_{m+1}}^{1+2}\\
 & =\sum_{s=1}^{T-1}(N-2)^{T-s}\sum_{2<i_{1}<\cdots<i_{s-1}<i_{s}=T+1}G_{1,i_{\text{1}}}^{1+2}\prod_{m=1}^{s-1}G_{i_{m},i_{m+1}}^{1+2}\\
 & +\sum_{s=1}^{T-1}(N-2)^{T+1-s}\sum_{2<i_{1}<\cdots<i_{s-1}<i_{s}=T+1}G_{0,i_{\text{1}}}\prod_{m=1}^{s-1}G_{i_{m},i_{m+1}}^{1+2}\quad,
\end{align*}
and using arguments similar to those of the proof of Lemma~\ref{lem:backwardinduction},
\begin{align*}
A+B & =\sum_{s=1}^{T}(N-2)^{T-s}\sum_{1<i_{1}<\cdots<i_{s-1}<i_{s}=T+1}G_{1,i_{\text{1}}}^{1+2}\prod_{m=1}^{s-1}G_{i_{m},i_{m+1}}^{1+2}\\
 & +(N-2)\sum_{s=1}^{T}(N-2)^{T-s}\sum_{1<i_{1}<\cdots<i_{s-1}<i_{s}=T+1}G_{0,i_{\text{1}}}\prod_{m=1}^{s-1}G_{i_{m},i_{m+1}}^{1+2}\quad,
\end{align*}
which can be rewritten as (again we use $s'-1=s$ and the fact that
$G_{0,1}=G_{0}=1$ by convention)
\begin{align*}
A+B & =\sum_{s=1}^{T}(N-2)^{T-s}\sum_{i_{0}=1<i_{1}<\cdots<i_{s-1}<i_{s}=T+1}\prod_{m=0}^{s-1}G_{i_{m},i_{m+1}}^{1+2}\\
 & +\sum_{s=1}^{T}(N-2)^{T+1-s}\sum_{1<i_{1}<\cdots<i_{s-1}<i_{s}=T+1}G_{0,i_{\text{1}}}\prod_{m=1}^{s-1}G_{i_{m},i_{m+1}}^{1+2}\\
 & =\sum_{s'=2}^{T+1}(N-2)^{T+1-s'}\sum_{i_{1}=1<i_{2}<\cdots<i_{s'-1}<i_{s'}=T+1}\prod_{m=1}^{s'-1}G_{i_{m},i_{m+1}}^{1+2}\\
 & +\sum_{s=1}^{T}(N-2)^{T+1-s}\sum_{1<i_{1}<\cdots<i_{s-1}<i_{s}=T+1}G_{0,i_{\text{1}}}\prod_{m=1}^{s-1}G_{i_{m},i_{m+1}}^{1+2}\\
 & =\sum_{s=1}^{T+1}(N-2)^{T+1-s}\sum_{0<i_{1}<\cdots<i_{s-1}<i_{s}=T+1}G_{0,i_{\text{1}}}\prod_{m=1}^{s-1}G_{i_{m},i_{m+1}}^{1+2}\quad.
\end{align*}
We conclude.
\end{proof}

\begin{proof}[Proof of Lemma~\ref{lem:A2impliesA1}]
Consider first the case where $k\geq m$, for $z_{p},z'_{p}\in\mathsf{Z}^{2}$,
\begin{align*}
\frac{Q_{p,p+k}(1)(z_{p})}{Q_{p,p+k}(1)(z'_{p})} & =\frac{G_{p}(z_{p})}{G_{p}(z'_{p})}\frac{M_{p+1}\left(G_{p+1}M_{p+2}Q_{p+2,p+k}(1)\right)(z_{p})}{M_{p+1}\left(G_{p+1}M_{p+2}Q_{p+2,p+k}(1)\right)(z'_{p})}\\
 & \leq\delta^{1/m}\frac{\sup_{z\in\mathsf{Z}}G_{p+1}(z)}{\inf_{z'\in\mathsf{Z}}G_{p+1}(z')}\frac{M_{p,p+2}\left(Q_{p+2,p+k}(1)\right)(z_{p})}{M_{p,p+2}\left(Q_{p+2,p+k}(1)\right)(z'_{p})}\\
 & \leq\delta\frac{M_{p,p+m}\left(Q_{p+m,p+k}(1)\right)(z_{p})}{M_{p,p+m}\left(Q_{p+m,p+k}(1)\right)(z'_{p})}\quad,
\end{align*}
by using (A\ref{hyp:strongmixingpotentialassumptions})\ref{hyp:enu:Gcondition}
and a straightforward induction. Now we can conclude by using (A\ref{hyp:strongmixingpotentialassumptions})\ref{hyp:enu:Mcondition}.
When $k<m$ we simply note that, proceeding as above, for any $z_{p},z'_{p}\in\mathsf{X}^{2}$,
\[
\frac{Q_{p,p+k}(1)(z_{p})}{Q_{p,p+k}(1)(z'_{p})}\leq\delta^{k/m}\leq\delta\leq\beta\delta\quad.
\]

\end{proof}

\section{Supplementary material for Section~\ref{sec:conjectureonboundedness}}
\begin{lem}
\label{lem:tightness}Assume that $\{\mu_{x}\}_{x\in\mathsf{X}}$
is a family of finite measures on $(\mathbb{R}^{d},\mathcal{B}(\mathbb{R}^{d}))$
such that $x\mapsto\mu_{x}(A)$ is a measurable mapping for each $A\in\mathcal{B}(\mathsf{\mathbb{R}}^{d})$,
and suppose that $\xi$ is a probability measure on $(\mathsf{X},\mathcal{B}(\mathsf{X}))$.
For any $\epsilon>0$ there exists a set $A\in\mathcal{B}(\mathsf{X})$
such that $\{\mu_{x}\}_{x\in A}$ is tight and $\xi(A)\ge1-\epsilon$.\end{lem}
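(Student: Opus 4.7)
The plan is a diagonal, Borel--Cantelli-style construction: since each $\mu_{x}$ is individually a finite measure on the Polish space $\mathbb{R}^{d}$, it is automatically tight; the task is only to upgrade this pointwise tightness to a statement that is uniform over a set of $\xi$-measure at least $1-\epsilon$. I would carve out this set by budgeting an initial slack of $\epsilon/2$ for a uniform bound on total masses, and then $\epsilon\,2^{-n-1}$ for the mass outside a compact set at tightness-scale $1/n$.

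First I would handle total masses. The map $x\mapsto\mu_{x}(\mathbb{R}^{d})$ is measurable by hypothesis and everywhere finite, so the sets $\{x:\mu_{x}(\mathbb{R}^{d})>M\}$ decrease to $\emptyset$ as $M\to\infty$ and continuity of $\xi$ from above gives $M<\infty$ such that
\[
A_{0}:=\{x\in\mathsf{X}:\mu_{x}(\mathbb{R}^{d})\le M\}\quad\text{satisfies}\quad\xi(A_{0})\ge 1-\epsilon/2.
\]
Next, let $B_{R}\subset\mathbb{R}^{d}$ denote the closed ball of radius $R$ (which is compact). For each fixed $x$, continuity from above of the finite measure $\mu_{x}$ gives $\mu_{x}(B_{R}^{\complement})\downarrow 0$ as $R\to\infty$, and $x\mapsto\mu_{x}(B_{R}^{\complement})$ is measurable. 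Hence for every integer $n\ge 1$ the decreasing family $\{x:\mu_{x}(B_{R}^{\complement})>1/n\}$ has $\xi$-measure tending to $0$ as $R\to\infty$, and I can select $R_{n}<\infty$ with
\[
A_{n}:=\{x\in\mathsf{X}:\mu_{x}(B_{R_{n}}^{\complement})\le 1/n\},\qquad\xi(A_{n})\ge 1-\epsilon\,2^{-n-1}.
\]

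Setting $A:=A_{0}\cap\bigcap_{n\ge 1}A_{n}$, a union bound yields $\xi(A^{\complement})\le\epsilon/2+\sum_{n\ge 1}\epsilon\,2^{-n-1}=\epsilon$; and on $A$ I obtain simultaneously $\sup_{x\in A}\mu_{x}(\mathbb{R}^{d})\le M$ and $\sup_{x\in A}\mu_{x}(B_{R_{n}}^{\complement})\le 1/n$ for every $n$, which is exactly uniform tightness of $\{\mu_{x}\}_{x\in A}$. I do not foresee any real obstacle: the argument is essentially book-keeping, and the only substantive inputs are the measurability assumption on $x\mapsto\mu_{x}(\,\cdot\,)$ and the trivial identity $\bigcap_{R}B_{R}^{\complement}=\emptyset$. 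The only mild subtlety is that tightness for a family of finite (as opposed to probability) measures must include uniform boundedness of total mass, which is why the step producing $A_{0}$ is needed in addition to the compact-exhaustion control; in the application to Lemma~\ref{lem:full-supp} the $\mu_{x}$ are probability measures, so that step is automatic there.
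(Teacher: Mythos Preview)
Your proof is correct and essentially identical to the paper's: both select radii $R_{n}$ (the paper writes $r_{\epsilon,k}$) so that $\xi\bigl\{x:\mu_{x}(B_{R_{n}}^{\complement})\le 1/n\bigr\}\ge 1-\epsilon\,2^{-n}$, intersect over $n$, and conclude by a union bound. Your additional step producing $A_{0}$ to control total mass is a minor refinement the paper omits---its implicit convention for tightness evidently does not require uniformly bounded mass, and as you yourself note the $\mu_{x}$ in the application are probability measures anyway.
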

\begin{proof}
Denote by $B_{r}$ the closed ball of radius $r$ centred at the origin
and define the sets 
\[
A_{k,r}:=\big\{ x\in\mathsf{X}\,:\,\mu_{x}\big(B_{r}^{\complement}\big)\le k^{-1}\big\},
\]
for $k\in\mathbb{N}$ and $r\in\mathbb{R}_{+}$; observe that $A_{k,r}\in\mathcal{B}(\mathsf{X})$.
Define the finite constants 
\[
r_{\epsilon,k}:=\inf\big\{ r\in\mathbb{R}_{+}\,:\,\xi(A_{k,r})\ge1-\epsilon2^{-k}\big\}.
\]
We may define $A:=\cap_{k\ge1}A_{k,r_{\epsilon,k}}$ which satisfies
$\xi(A^{\complement})\le\sum_{k=1}^{\infty}\epsilon2^{-k}=\epsilon$. 
\end{proof}

\section{\label{sec:Comparison-with-Particle}Detailed comparisons with the
PIMH and PMMH}

In this section we contrast the performance properties of the i-cSMC
(resp. PGibbs sampler), as established in Section~\ref{sec:Estimates-of-the}
(resp. Section~\ref{sec:The-particle-Gibbs}), with those of the
Particle Independent Metropolis--Hastings kernel (PIMH) (resp. particle
Marginal Metropolis--Hastings (PMMH)) also proposed in~\citep{andrieu-doucet-holenstein},
which also aims to (indirectly) sample from $\pi$ as defined in Section~\ref{sec:The-i-CSMC}
(resp. Section~\ref{sec:The-particle-Gibbs}). We use notation similar
to that used in Section~\ref{sec:The-i-CSMC} for the i-cSMC algorithm.
The Markov kernel of the PIMH can be defined for $(a,z)\in\mathsf{W}$
(with an obvious abuse of notation in order to alleviate notation),
$W\in\mathcal{B}\bigr(\mathsf{W}\bigr)$ and $N\geq1$ as 
\[
\check{P}_{N}(a,z;W):=\mathbb{E}^{N}\left[\mathbb{I}\{(A,Z)\in W\}\left\{ 1\wedge\frac{\hat{\gamma}_{T}^{N}(Z)}{\hat{\gamma}_{T}^{N}(z)}\right\} +\left\{ 1-1\wedge\frac{\hat{\gamma}_{T}^{N}(Z)}{\hat{\gamma}_{T}^{N}(z)}\right\} \mathbb{I}\{(a,z)\in W\}\right]\;,
\]
where $\mathbb{E}^{N}$ is the expectation corresponding to the law
$\mathbb{P}^{N}$ of the standard SMC algorithm, defined on $\mathsf{W}\times\mathcal{B}\bigl(\mathsf{W}\bigr)$
via the following conditionals, with $z_{t}\in\mathsf{Z}^{N}$ for
$t\in[T]$, $a_{t}\in[N]^{T-1}$ and $a_{T}\in[N]$, 
\[
\mathbb{P}^{N}\left(Z_{1}\in{\rm d}z_{1}\right):=\prod_{i=1}^{N}M_{1}({\rm d}z_{1}^{i})\quad,
\]
and for $t\in\{2,\ldots,T\}$,
\begin{align*}
\mathbb{P}^{N}\big(Z_{t}\in{\rm d}z_{t},A_{t-1}=a_{t-1} & \mid Z_{1:t-1}=z_{1:t-1},A_{1:t-2}=a_{1:t-2}\big)\\
= & \mathbb{P}^{N}\left(Z_{t}\in{\rm d}z_{t},A_{t-1}=a_{t-1}\mid Z_{t-1}=z_{t-1}\right)\\
:= & \prod_{i=1}^{N}\sum_{k=1}^{N}\frac{G_{t-1}(z_{t-1}^{k})}{\sum_{j=1}^{N}G_{t-1}(z_{t-1}^{j})}\mathbb{I}\left\{ a_{t-1}^{i}=k\right\} M_{t}(z_{t-1}^{k},{\rm d}z_{t}^{i})\quad\text{ and}\\
\mathbb{P}^{N}\left(A_{T}=a_{T}\mid Z_{T}=z_{T}\right)= & \frac{G_{T}(z_{T}^{a_{T}})}{\sum_{j=1}^{N}G_{T}(z_{T}^{j})}\quad,
\end{align*}

$A=A_{1:T}$ and $Z:=Z_{1:T}$. We note that $\hat{\gamma}_{T}^{N}(z)$
is not a random quantity in the definition of $\check{P}_{N}(a,z;W)$.
The invariant distribution of the Markov chain, which evolves on $\mathsf{W}$,
is given for any $W\in\mathcal{B}\bigr(\mathsf{W}\bigr)$ by 
\begin{equation}
\check{\pi}^{N}(W):=\sum_{{\bf i}\in[N]^{T}}\frac{1}{N^{T}}\int_{\mathsf{X}}\mathbb{P}_{{\bf i},x}\left((A,Z)\in W\right)\pi({\rm d}x)\quad.\label{eq:def_check_pi}
\end{equation}
As suggested by its name and as shown in~\citep{andrieu-doucet-holenstein},
this algorithm can be interpreted as being a standard independent
Metropolis--Hastings (IMH) kernel with target distribution $\check{\pi}_{N}$
and proposal distribution the standard SMC law $\mathbb{P}^{N}$.
Samples from $\pi$ can be recovered as a byproduct of $A$ and $Z$~\citep{andrieu-doucet-holenstein}
: this should not be surprising since $\check{\pi}$ is the invariant
distribution of the i-cSMC algorithm as seen as a Markov chain on
the extended space $\mathsf{W}$ and not $\mathsf{X}$ solely. The
interpretation as an IMH algorithm allows us to use a well known result
by~\citep{mengersen-tweedie} to deduce that the PIMH is (uniformly)
geometrically ergodic if and only if $\check{\pi}-{\rm ess}\sup_{z}\hat{\gamma}_{T}^{N}(z)<\infty$
with rate $r\leq1-\check{\epsilon}_{N}$ where
\[
\check{\epsilon}_{N}:=\frac{\gamma_{T}}{\check{\pi}_{N}-{\rm ess}\sup_{z}\hat{\gamma}_{T}^{N}(z)}\quad.
\]
Clearly $\check{\epsilon}_{N}>0$ whenever $\check{\pi}_{N}-{\rm ess}\sup_{z}\hat{\gamma}_{T}^{N}(z)<\infty$,
which is similar to what we have obtained in Propositions~\ref{prop:boundepsilonNwithuniformboundG}
and~\ref{prop:mixing_bound} for the i-cSMC. An important difference,
which may explain the widely perceived superiority of the i-cSMC,
is that the rate of convergence of PIMH will typically not improve
(and in particular converge to $1$) as $N$ increases, even for bounded
potentials, which is in contrast with the corresponding convergence
rate of the i-cSMC (see Propositions~\ref{prop:boundepsilonNwithuniformboundG}
and~\ref{prop:mixing_bound}). 

We can also compare the results of Section~\ref{sec:The-particle-Gibbs}
for the PGibbs sampler with the corresponding results for the PMMH
algorithm \citep{andrieu-doucet-holenstein}. This latter algorithm
evolves on $\Theta\times\mathsf{W}$ with transition probability
\begin{align*}
\check{\Phi}_{N}\bigl(\theta,a,z;S\times W\bigr)= & \int_{S}\mathbb{E}_{\vartheta}^{N}\left[\mathbb{I}\{(A,Z)\in W\}\left\{ 1\wedge\frac{\varpi({\rm d}\vartheta)q(\vartheta,{\rm d}\theta)}{\varpi({\rm d}\theta)q(\theta,{\rm d}\vartheta)}\frac{\hat{\gamma}_{\vartheta,T}^{N}(Z)}{\hat{\gamma}_{\theta,T}^{N}(x)}\right\} \right]q(\theta,{\rm d}\vartheta)\\
+\int_{\Theta}\mathbb{E}_{\vartheta}^{N} & \left[\left\{ 1-1\wedge\frac{\varpi({\rm d}\vartheta)q(\vartheta,{\rm d}\theta)}{\varpi({\rm d}\theta)q(\theta,{\rm d}\vartheta)}\frac{\hat{\gamma}_{\vartheta,T}^{N}(Z)}{\hat{\gamma}_{\theta,T}^{N}(x)}\right\} \mathbb{I}\left\{ (\theta,a,z)\in S\times W\right\} \right]q(\theta,{\rm d}\vartheta)
\end{align*}
which leaves the distribution $\pi({\rm d}\theta)\check{\pi}_{\theta}^{N}({\rm d}w)$
invariant, where for any $\theta\in\Theta$, $\check{\pi}_{\theta}^{N}$
is as in (\ref{eq:def_check_pi}) but with $\mathbb{P}_{\theta}^{N}$
(and $\mathbb{E}_{\theta}^{N}$) corresponding to the SMC process
as defined above for a family of Markov kernels $\{M_{\theta}\}$
and potentials $\{G_{\theta,t},t\in[T]\}$. Just as $\Phi_{N}$ can
be viewed as an exact approximation of $\Gamma$, $\check{\Phi}_{N}$
can be viewed as an exact approximation of a Markov kernel $\Phi^{*}$,
evolving only on $\Theta$ as, for $(\theta,S)\in\Theta\times\mathcal{B}\bigl(\Theta\bigr)$
\begin{align*}
\Phi^{*}\bigl(\theta,S\bigr)= & \int_{S}\left\{ 1\wedge\frac{\varpi({\rm d}\vartheta)q(\vartheta,{\rm d}\theta)}{\varpi({\rm d}\theta)q(\theta,{\rm d}\vartheta)}\frac{\gamma_{\vartheta,T}}{\gamma_{\theta,T}}\right\} q(\theta,{\rm d}\vartheta)\\
 & +\int_{\Theta}\left\{ 1-1\wedge\frac{\varpi({\rm d}\vartheta)q(\vartheta,{\rm d}\theta)}{\varpi({\rm d}\theta)q(\theta,{\rm d}\vartheta)}\frac{\gamma_{\vartheta,T}}{\gamma_{\theta,T}}\right\} \mathbb{I}\left\{ \theta\in S\right\} q(\theta,{\rm d}\vartheta)\;.
\end{align*}
In~\citep{andrieu2015}, it is shown that when
\[
\pi-{\rm ess}\sup_{\theta}\left(\check{\pi}_{\theta}-{\rm ess}\sup_{z}\frac{\hat{\gamma}_{\theta,T}^{N}(z)}{\gamma_{\theta,T}}\right)<\infty\quad,
\]
${\rm Gap}(\check{\Phi}_{N})>0$ whenever ${\rm Gap}(\Phi^{*})>0$,
i.e. the existence of a spectral gap of $\Phi^{*}$ is ``inherited''
by $\check{\Phi}_{N}$. This coincides in many cases with inheritance
of geometric ergodicity, for example when $\check{\Phi}_{N}$ is positive.

The rate of convergence of a geometrically ergodic PMMH Markov chain
does not improve in general as $N$ increases, in contrast to our
results for PGibbs Markov chains. In this context, weak convergence
in $N$ of the asymptotic variance of estimates of $\pi(f)$ using
$\check{\Phi}_{N}$ to that of $\Phi^{*}$ is nevertheless provided
by~\citep[Proposition~19]{andrieu2015} for all $f\in L^{2}(\Theta,\pi)$.
This can be contrasted with quantitative bounds obtained in Theorem~\ref{thm:MWG-functions-of-theta}.

\bibliographystyle{abbrv}
\bibliography{backwardrecursion_arxiv}

\begin{thebibliography}{10}

\bibitem{andrieu-doucet-holenstein}
C.~Andrieu, A.~Doucet, and R.~Holenstein.
\newblock Particle {M}arkov chain {M}onte {C}arlo methods.
\newblock {\em J. R. Stat. Soc. Ser. B Stat. Methodol.}, 72(3):269--342, 2010.
\newblock (with discussion).

\bibitem{andrieu2015uniformsupplement}
C.~Andrieu, A.~Lee, and M.~Vihola.
\newblock Supplement to ``{U}niform ergodicity of the iterated conditional
  {SMC} and geometric ergodicity of particle {G}ibbs samplers''.

\bibitem{andrieu2013uniform}
C.~Andrieu, A.~Lee, and M.~Vihola.
\newblock Uniform ergodicity of the iterated conditional {SMC} and geometric
  ergodicity of particle {G}ibbs samplers.
\newblock {\em arXiv preprint arXiv:1312.6432}, 2013.

\bibitem{andrieu-roberts}
C.~Andrieu and G.~O. Roberts.
\newblock The pseudo-marginal approach for efficient {M}onte {C}arlo
  computations.
\newblock {\em Ann. Statist.}, 37(2):697--725, 2009.

\bibitem{andrieu2015}
C.~Andrieu and M.~Vihola.
\newblock Convergence properties of pseudo-marginal {M}arkov chain {M}onte
  {C}arlo algorithms.
\newblock {\em Ann. Appl. Probab.}, 25(2):1030--1077, 04 2015.

\bibitem{berard2013lognormal}
J.~B{\'e}rard, P.~Del-Moral, and A.~Doucet.
\newblock A lognormal central limit theorem for particle approximations of
  normalizing constants.
\newblock Preprint arXiv:1307.0181v1, 2013.

\bibitem{cerou-delmoral-guyader}
F.~C{\'e}rou, P.~Del~Moral, and A.~Guyader.
\newblock A nonasymptotic theorem for unnormalized {F}eynman--{K}ac particle
  models.
\newblock {\em Annales de l'Institut Henri Poincar{\'e}, Probabilit{\'e}s et
  Statistiques}, 47(3):629--649, 2011.

\bibitem{chopin:singh:2013}
N.~Chopin and S.~S. Singh.
\newblock On the particle {G}ibbs sampler.
\newblock Preprint arXiv:1304.1887v1, 2013.

\bibitem{delmoral:2004}
P.~Del~Moral.
\newblock {\em Feynman-Kac Formulae}.
\newblock Springer, 2004.

\bibitem{del2014feynman}
P.~Del~Moral, R.~Kohn, and F.~Patras.
\newblock On {F}eynman--{K}ac and particle {M}arkov chain {M}onte {C}arlo
  models.
\newblock {\em arXiv preprint arXiv:1404.5733}, 2014.

\bibitem{Doucet07032015}
A.~Doucet, M.~K. Pitt, G.~Deligiannidis, and R.~Kohn.
\newblock Efficient implementation of {M}arkov chain {M}onte {C}arlo when using
  an unbiased likelihood estimator.
\newblock {\em Biometrika}, 2015.

\bibitem{hobert2004mixture}
J.~P. Hobert and C.~P. Robert.
\newblock A mixture representation of $\pi$ with applications in {M}arkov chain
  {M}onte {C}arlo and perfect sampling.
\newblock {\em Annals of Applied Probability}, 14(3):1295--1305, 2004.

\bibitem{kipnis1986central}
C.~Kipnis and S.~Varadhan.
\newblock Central limit theorem for additive functionals of reversible {M}arkov
  processes and applications to simple exclusions.
\newblock {\em Communications in Mathematical Physics}, 104(1):1--19, 1986.

\bibitem{kontoyiannis:meyn:2012}
I.~Kontoyiannis and S.~Meyn.
\newblock Geometric ergodicity and the spectral gap of non-reversible {M}arkov
  chains.
\newblock {\em Probability Theory and Related Fields}, 154(1-2):327--339, 2012.

\bibitem{lee2014perfect}
A.~Lee, A.~Doucet, and K.~{\L}atuszy{\'n}ski.
\newblock Perfect simulation using atomic regeneration with application to
  sequential {M}onte {C}arlo.
\newblock {\em arXiv:1407.5770}, 2014.

\bibitem{lee-latuszynski}
A.~Lee and K.~{\L}atuszy{\'n}ski.
\newblock Variance bounding and geometric ergodicity of {M}arkov chain {M}onte
  {C}arlo kernels for approximate {B}ayesian computation.
\newblock {\em Biometrika}, 101(3):655--671, 2014.

\bibitem{lee-yau-giles-doucet-holmes}
A.~Lee, C.~Yau, M.~B. Giles, A.~Doucet, and C.~C. Holmes.
\newblock On the utility of graphics cards to perform massively parallel
  simulation of advanced {M}onte {C}arlo methods.
\newblock {\em Journal of Computational and Graphical Statistics},
  19(4):769--789, 2010.

\bibitem{lindsten_pg}
F.~Lindsten, R.~Douc, and E.~Moulines.
\newblock Uniform ergodicity of the particle {G}ibbs sampler.
\newblock Preprint arXiv:1401.0683v1, 2014.

\bibitem{lindvall}
T.~Lindvall.
\newblock {\em Lectures on the Coupling Method}.
\newblock Dover, New York, 2002.

\bibitem{mengersen-tweedie}
K.~L. Mengersen and R.~L. Tweedie.
\newblock Rates of convergence of the {H}astings and {M}etropolis algorithms.
\newblock {\em Ann. Statist.}, 24(1):101--121, 1996.

\bibitem{murdoch:green:1998}
D.~J. Murdoch and P.~J. Green.
\newblock Exact sampling from a continuous state space.
\newblock {\em Scandinavian Journal of Statistics}, 25(3):483--502, 1998.

\bibitem{propp1996exact}
J.~G. Propp and D.~B. Wilson.
\newblock Exact sampling with coupled {M}arkov chains and applications to
  statistical mechanics.
\newblock {\em Random Structures and Algorithms}, 9(1-2):223--252, 1996.

\bibitem{robert1999monte}
C.~P. Robert and G.~Casella.
\newblock {\em Monte Carlo statistical methods}.
\newblock Springer New York, 1999.

\bibitem{roberts-rosenthal-geometric}
G.~O. Roberts and J.~S. Rosenthal.
\newblock Geometric ergodicity and hybrid {M}arkov chains.
\newblock {\em Elect. Comm. in Prob.}, 2:13--25, 1997.

\bibitem{roberts2001markov}
G.~O. Roberts and J.~S. Rosenthal.
\newblock Markov chains and de-initializing processes.
\newblock {\em Scandinavian Journal of Statistics}, 28(3):489--504, 2001.

\bibitem{roberts-tweedie}
G.~O. Roberts and R.~L. Tweedie.
\newblock Geometric convergence and central limit theorems for multidimensional
  {H}astings and {M}etropolis algorithms.
\newblock {\em Biometrika}, 83(1):95--110, 1996.

\bibitem{rudolf2011explicit}
D.~Rudolf.
\newblock Explicit error bounds for {M}arkov chain {M}onte {C}arlo.
\newblock Preprint arXiv:1108.3201v2, 2013.

\bibitem{sherlock2015}
C.~Sherlock, A.~H. Thiery, G.~O. Roberts, and J.~S. Rosenthal.
\newblock On the efficiency of pseudo-marginal random walk metropolis
  algorithms.
\newblock {\em Ann. Statist.}, 43(1):238--275, 02 2015.

\bibitem{whiteley_stability}
N.~Whiteley.
\newblock Stability properties of some particle filters.
\newblock {\em Ann. Appl. Probab.}, 23(6):2500--2537, 2013.

\end{thebibliography}

\end{document}